\newtheorem{theorem}{Theorem}
\newtheorem{prop}{Proposition}
\newtheorem{lemma}{Lemma}
\newtheorem{definition}{Definition}
\def\E{{\mathbb E}}
\def\R{{\mathbb R}}
\def\N{{\mathbb N}}
\def\E{{\mathbb E}}
\def\P{{\mathbb P}}
\def\ii{{\rm i}}
\def\dd{\mathrm{d}}
\newenvironment{sistema}%
  {\left\lbrace\begin{array}{@{}l@{}}}%
  {\end{array}\right.}
\title{Large-width functional asymptotics for deep Gaussian neural networks}
\author{Daniele Bracale$^1$, Stefano Favaro$^{1,2}$, Sandra Fortini$^3$, Stefano Peluchetti$^4$ \\
$^1$ University of Torino, $^2$ Collegio Carlo Alberto, $^3$ Bocconi University, $^4$ Cogent Labs
}
\begin{document}
\maketitle
\begin{abstract}In this paper, we consider fully-connected feed-forward deep neural networks where weights and biases are independent and identically distributed according to Gaussian distributions. Extending previous results \citep{matthews2018gaussian,matthews2018gaussianB,yang2019wide} we adopt a function-space perspective, i.e. we look at 
neural networks as infinite-dimensional random elements on the input space $\R^I$. Under suitable assumptions on the activation function we show that: i) a network defines a continuous stochastic process on the input space $\mathbb{R}^I$; ii) a network with re-scaled weights converges weakly to a continuous Gaussian Process in the large-width limit; iii) the limiting Gaussian Process has almost surely locally $\gamma$-H\"older continuous paths, for $0 < \gamma <1$. Our results contribute to recent theoretical studies on the interplay between infinitely-wide deep neural networks and Gaussian Processes by establishing weak convergence in function-space with respect to a stronger metric.
\end{abstract}

\section{Introduction}\label{sec:intro}

The interplay between infinitely-wide deep neural networks and classes of Gaussian Processes has its origins in the seminal work of \cite{neal1995bayesian}, and it has been the subject of several theoretical studies. See, e.g., \cite{der2006beyond}, \citet{lee2018deep}, \cite{matthews2018gaussian,matthews2018gaussianB}, \citet{yang2019wide} and references therein. Let consider a fully-connected feed-forward neural network with re-scaled weights composed of $L\geq1$ layers of widths $n_1,\dots,n_L$, i.e.
\begin{equation}\label{neural_start}
\begin{aligned}
&f_{i}^{(1)}(x)=\sum_{j=1}^{I}w_{i,j}^{(1)}x_{j}+b_{i}^{(1)}\quad&& i=1,\dots,n_1\\
&f_{i}^{(l)}(x)=\frac{1}{\sqrt{n_{l-1}}}\sum_{j=1}^{n_{l-1}}w_{i,j}^{(l)}\phi(f_{j}^{(l-1)}(x))+b_{i}^{(l)}\quad&&l=2,\ldots,L,\ \ i=1,\dots,n_l
\end{aligned}
\end{equation}

where $\phi$ is a non-linearity and $x \in \mathbb{R}^I$ is a real-valued input of dimension $I \in \N$. \cite{neal1995bayesian} considered the case $L=2$, a finite number $k\in \N$ of fixed distinct inputs $(x^{(1)},\ldots,x^{(k)})$, with each $x^{(r)} \in \mathbb{R}^I$, and weights $w_{i,j}^{(l)}$ and biases $b_{i}^{(l)}$ independently and identically distributed (iid) as Gaussian distributions. Under appropriate assumptions on the activation $\phi$ \cite{neal1995bayesian} showed that: i) for a fixed unit $i$, the $k$-dimensional random vector $(f^{(2)}_i(x^{(1)}),\dots,f^{(2)}_i(x^{(k)}))$ converges in distribution, as the width $n_1$ goes to infinity, to a $k$-dimensional Gaussian random vector; ii) the large-width convergence holds jointly over finite collections of $i$'s and the limiting $k$-dimensional Gaussian random vectors are independent across the index $i$. These results concerns neural networks with a single hidden layer, but \cite{neal1995bayesian} also includes preliminary considerations on infinitely-wide deep neural networks. More recent works, such as \cite{lee2018deep}, established convergence results corresponding to \cite{neal1995bayesian} results i) and ii) for deep neural networks under the assumption that widths $n_1,\dots,n_L$ go to infinity sequentially over network layers. \cite{matthews2018gaussian,matthews2018gaussianB} extended the work of \cite{neal1995bayesian,lee2018deep} by assuming that the width $n$ grows to infinity jointly over network layers, instead of sequentially, and by establishing joint convergence over all $i$ and countable distinct inputs. The joint growth over the layers is certainly more realistic than the sequential growth, since the infinite Gaussian limit is considered as an approximation of a very wide network. We operate in the same setting of \cite{matthews2018gaussianB}, hence from here onward $n \geq 1$ denotes the common layer width, i.e. $n_1,\dots,n_L = n$. Finally, similar large-width limits have been established for a great variety of neural network architectures, see for instance \cite{yang2019wide}.

The assumption of a countable number of fixed distinct inputs is the common trait of the literature on large-width asymptotics for deep neural networks. Under this assumption, the large-width limit of a network boils down to the study of the large-width asymptotic behavior of the $k$-dimensional random vector $(f^{(l)}_i(x^{(1)}),\dots,f^{(l)}_i(x^{(k)}))$ over $i\geq1$ for finite $k$. Such limiting finite-dimensional distributions describe the large-width distribution of a neural network a priori over any dataset, which is finite by definition. When the limiting distribution is Gaussian, as it often is, this immediately paves the way to Bayesian inference for the limiting network. Such an approach is competitive with the more standard stochastic gradient descent training for the fully-connected architectures object of our study \citep{lee2020finite}. However, knowledge of the limiting finite-dimensional distributions is not enough to infer properties of the limiting neural network which are inherently uncountable such as the continuity of the limiting neural network, or the distribution of its maximum over a bounded interval. Results in this direction give a more complete understanding of the assumptions being made a priori, and hence whether a given model is appropriate for a specific application. For instance, \cite{van2011information} shows that for Gaussian Processes the function smoothness under the prior should match the smoothness of the target function for satisfactory inference performance. 

In this paper we thus consider a novel, and more natural, perspective to the study of large-width limits of deep neural networks. This is an infinite-dimensional perspective where, instead of fixing a countable number of distinct inputs, we look at $f^{(l)}_i(x,n)$ as a stochastic process over the input space $\R^I$. Under this perspective, establishing large-width limits requires considerable care and, in addition, it requires to show the existence of both the stochastic process induced by the neural network and its large-width limit. We start by proving the existence of i) a continuous stochastic process, indexed by the network width $n$, corresponding to the fully-connected feed-forward deep neural network; ii) a continuous Gaussian Process corresponding to the infinitely-wide limit of the deep neural network. Then, we prove that the stochastic process i) converges weakly, as the width $n$ goes to infinity, to the Gaussian Process ii) jointly over all units $i$. As a by-product of our results, we show that the limiting Gaussian Process has almost surely locally $\gamma$-H\"older continuous paths, for $0 < \gamma <1$. To make the exposition self-contained we include an alternative proof of the main result of \cite{matthews2018gaussian,matthews2018gaussianB}, i.e. the finite-dimensional limit for full-connected neural networks. The major difference between our proof and that of \cite{matthews2018gaussianB} is due to the use of the characteristic function to establish convergence in distribution, instead of relying on a CLT \citep{blum1958central} for exchangeable sequences.

The paper is structured as follows. In Section \ref{sec:setting} we introduce the setting under which we operate, whereas in  Section \ref{sec:sketch} we present a high-level overview of the approach taken to establish our results. Section \ref{sec:limit} contains the core arguments of the proof of our large-width functional limit for deep Gaussian neural networks, which are spelled out in detail in the supplementary material (SM). We conclude in Section \ref{sec:discussion}.

\section{Setting}\label{sec:setting}

Let $(\Omega,\mathcal{H},\mathbb{P})$ be the probability space on which all random elements of interest are defined. Furthermore, let $N(\mu,\sigma^2)$ denote a Gaussian distribution with mean $\mu \in \R$ and strictly positive variance $\sigma^2 \in \R_{+}$, and let $N_k(\mathbf{m},\Sigma)$ be a $k$-dimensional  Gaussian distribution with mean $\mathbf{m} \in \R^k$ and covariance matrix $\Sigma \in \R^{k \times k}$. In particular, $\R^k$ is equipped with $\| \cdot \|_{\R^k}$, the euclidean norm induced by the inner product $\langle \cdot, \cdot \rangle_{\R^k}$, and $\R^{\infty}=\times_{i=1}^{\infty}\R$ is equipped with $\| \cdot \|_{\R^{\infty}}$, the norm induced by the distance $d(\textbf{a},\textbf{b}) _{\infty}=\sum_{i\geq1}\xi(|a_i-b_i|)/2^{i}$ for $\textbf{a},\textbf{b} \in \R^{\infty}$ (Theorem 3.38 of \cite{charalambos2013infinite}), where $\xi(t)=t/(1+t)$ for all real values $t \geq 0$. Note that $(\R,|\cdot|)$ and $(\R^{\infty},\| \cdot \|_{\R^{\infty}})$ are Polish spaces, i.e. separable and complete metric spaces (Corollary 3.39 of \cite{{charalambos2013infinite}}). We choose $d_{\infty}$ since it generates a topology that coincides with the product topology (line 5 of the proof of Theorem 3.36 of \cite{charalambos2013infinite}). The space $(S,d)$ will indicate a generic Polish space such as $\R$ or $\R^{\infty}$ with the associated distance. We indicate with $S^{\R^I}$ the space of functions from $\R^I$ into $S$ and $C(\R^I;S) \subset S^{\R^I}$ the space of continuous functions from $\R^I$ into $S$. Let $\omega_{i,j}^{(l)}$ be the random weights of the $l$-th layer, and assume that they are iid as $N(0,\sigma_{\omega}^2)$, i.e. 
\begin{align}
\label{caromega}
\varphi_{ \omega_{i,j}^{(l)} } (t) = \E [ e^{\ii t \omega_{i,j}^{(l)} } ] = e^{-\frac{1}{2}\sigma_{\omega}^2t^2}
\end{align}
is the characteristic function of $\omega_{i,j}^{(l)} $, for $i \geq 1$, $j=1, \dots, n$ and $l \geq 1$. Let $b^{(l)}_i$ be the random biases of the $l$-th layer, and assume that they are iid as $N(0,\sigma_b^2)$, i.e.
\begin{align}
\label{carbias}
\varphi_{ b_{i}^{(l)} } (t) = \E [ e^{\ii t b_{i}^{(l)} } ] = e^{-\frac{1}{2}\sigma_{b}^2t^2}
\end{align}
is the characteristic function of $b_{i}^{(l)}$, for $i \geq 1$ and $l \geq 1$. Weights $\omega^{(l)}_{i,j}$ are independent of biases $b_{i}^{(l)}$, for any $i \geq 1, j=1,\dots, n$ and $l \geq 1$. Let $\phi: \R \to\ \R$ denote a continuous non-linearity. For the finite-dimensional limit we will assume the polynomial envelop condition
\begin{align}
\label{envelope}
|\phi(s)| \leq a+b|s|^{m},
\end{align}
for any $s \in \R$ and some real values $a,b>0$ and $m \geq 1$. For the functional limit we will use a stronger assumption on $\phi$, assuming $\phi$ to be Lipschitz on $\R$ with Lipschitz constant $L_{\phi}$.

Let $Z$ be a stochastic process on $\mathbb{R}^{I}$, i.e. for each $x\in\mathbb{R}^{I}$, $Z(x)$ is defined on $(\Omega,\mathcal{H},\mathbb{P})$ and it takes values in $S$. For any $k\in \N$ and $x_{1},\ldots,x_{k}\in\mathbb{R}^{I}$, let $P^Z_{x_{1},\ldots,x_{k}}=\P(Z(x_{1})\in A_{1},\ldots, Z(x_{k})\in A_{k})$, with $A_{1},\ldots,A_{k}\in\mathcal{B}(S)$. Then, the family of finite-dimensional distributions of $Z(x)$ is defined as the family of distributions $\{P^Z_{x_{1},\ldots,x_{k}}\text{ : }x_{1},\ldots,x_{k}\in\mathbb{R}^{I}\text{ and }k\in \N\}$. See, e.g., \cite{billingsley1995probability}. In Definition \ref{def:1} and Definition \ref{def:2} we look at the deep neural network \eqref{neural_start} as a stochastic process on input space $\mathbb{R}^{I}$, that is a stochastic process whose finite-dimensional distributions are determined by a finite number $k\in \N$ of fixed distinct inputs $(x^{(1)},\ldots,x^{(k)})$, with each $x^{(r)} \in \mathbb{R}^I$. The existence of the stochastic processes of Definition \ref{def:1} and Definition \ref{def:2} will be thoroughly discussed in Section \ref{sec:sketch}.

\begin{definition}\label{def:1}
For any fixed $l\geq 2$ and $i \geq 1$, let $(f_i^{(l)}(n))_{n\geq 1}$ be a sequence of stochastic processes on $\R^{I}$. That is, $f_i^{(l)}(n): \R^I \rightarrow \R$, with $x \mapsto f_i^{(l)}(x,n)$, is a stochastic process on $\R^{I}$ whose finite-dimensional distributions are the laws, for any $k\in \N$ and $x^{(1)},\ldots,x^{(k)}\in\R^{I}$, of the $k$-dimensional random vectors
\begin{equation}\label{f1}
f^{(1)}_i({\textbf{X},n})=f^{(1)}_i({\textbf{X}})=[f^{(1)}_i(x^{(1)},n), \dots , f^{(1)}_i(x^{(k)},n)]^T=\sum_{j=1}^{I}\omega_{i,j}^{(1)}\textbf{x}_{j}+ b_{i}^{(1)}\textbf{1}
\end{equation}
\begin{equation}\label{f2}
f^{(l)}_i({\textbf{X},n})=[f^{(l)}_i(x^{(1)},n), \dots , f^{(l)}_i(x^{(k)},n)]^T=\frac{1}{\sqrt{n}}\sum_{j=1}^{n}\omega_{i,j}^{(l)}( \phi \bullet f^{(l-1)}_j({\textbf{X},n}))+ b_{i}^{(l)}\textbf{1}
\end{equation}
where $\textbf{X}=[ x^{(1)}, \dots , x^{(k)}] \in \R^{I \times k}$ is a $I \times k$ input matrix of $k$ distinct inputs $x^{(r)} \in \R^I$, $\textbf{1}$ denotes a vector of dimension $k \times 1$ of $1$'s, $\textbf{x}_{j}$ denotes the $j$-th row of the input matrix and $\phi\bullet\textbf{X}$ is the element-wise application of $\phi$ to the matrix $\textbf{X}$. Let $f^{(l)}_{r,i}({\textbf{X},n})=\textbf{1}^{T}_r f^{(l)}_i(\textbf{X},n)=f_i^{(l)}(x^{(r)},n)$ denote the $r$-th component of the $k \times 1$ vector $f^{(l)}_i({\textbf{X},n})$, being $\textbf{1}_r$ a vector of dimension $k \times 1$ with $1$ in the $r$-the entry and $0$ elsewhere.
\end{definition}
\textbf{Remark}: in contrast to \eqref{neural_start}, we have defined \eqref{f1}-\eqref{f2} over an infinite number of units $i \geq 1$ over each layer $l$, but the dependency on each previous layer $l-1$ remains limited to the first $n$ components.

\begin{definition}\label{def:2}
For any fixed $l \geq 2$, let $(\textbf{F}^{(l)}(n))_{n\geq 1}$ be a sequence of stochastic processes on $\mathbb{R}^{I}$. That is, $\textbf{F}^{(l)}(n): \R^I \rightarrow  \R^{\infty}$, with $x \mapsto \textbf{F}^{(l)}(x,n)$, is a stochastic process on $\R^{I}$ whose finite-dimensional distributions are the laws, for any $k\in \N$ and $x^{(1)},\ldots,x^{(k)}\in\R^{I}$, of the $k$-dimensional random vectors
\[
\begin{sistema}
\textbf{F}^{(1)}(\textbf{X}) = \Big[ f_1^{(1)}(\textbf{X}), f_2^{(1)}(\textbf{X}), \dots \Big]^T\\
\textbf{F}^{(l)}(\textbf{X},n) = \Big[ f_1^{(l)}(\textbf{X},n), f_2^{(l)}(\textbf{X},n), \dots \Big]^T.
\end{sistema}
\] 
\end{definition}
\textbf{Remark}: for $k$ inputs, the vector $\textbf{F}^{(l)}(\textbf{X},n)$ is an $\infty \times k$ array, and for a single input $x^{(r)}$, $\textbf{F}^{(l)}(x^{(r)},n)$ can be written as $[f^{(l)}_1(x^{(r)},n), f^{(l)}_2(x^{(r)},n), \dots]^T \in \R^{\infty \times 1}$. We define $\textbf{F}_{r}^{(l)}(\textbf{X},n)=\textbf{F}^{(l)}(x^{(r)},n)$ the $r$-th column of $\textbf{F}^{(l)}(\textbf{X},n)$. When we write $\langle \textbf{F}^{(l-1)}(x,n) ,\textbf{F}^{(l-1)}(y,n) \rangle_{\R^{n}}$ (see (\ref{f1k})) we treat $\textbf{F}^{(l)}(x,n)$ and $\textbf{F}^{(l)}(y,n)$ as elements in $\R^n$ and not in $\R^{\infty}$, i.e. we consider only the first $n$ components of $\textbf{F}^{(l)}(x,n)$ and $\textbf{F}^{(l)}(y,n)$.

\section{Plan sketch}\label{sec:sketch}
We start by recalling the notion of convergence in law, also referred to as convergence in distribution or weak convergence, for a sequence of stochastic processes. See \cite{billingsley1995probability} for a comprehensive account.

\begin{definition}[convergence in distribution]\label{def:3}
Suppose that $f$ and $(f(n))_{n \geq 1}$ are random elements in a topological space $C$. Then, $(f(n))_{n \geq 1}$ is said to converge in distribution to $f$, if $\mathbb{E}[h(f(n))] \rightarrow \mathbb{E}[h(f)]$ as $n \rightarrow \infty$ for every bounded and continuous function $h: C \rightarrow \mathbb{R}$. In that case we write $f(n)\stackrel{d}{\rightarrow}f$.
\end{definition}

In this paper, we deal with continuous and real-valued stochastic processes. More precisely, we consider random elements defined on $C(\R^I;S)$, with $(S,d)$ Polish space. Our aim is to study in $C(\R^I;S)$ the convergence in distribution as the width $n$ goes to infinity for:
\begin{enumerate}[i)]
  \item the sequence $(f_i^{(l)}(n))_{n\geq 1}$ for a fixed $l\geq 2$ and $i \geq 1$ with $(S,d)=(\R, |\cdot|)$, i.e. the neural network process for a single unit;
\item the sequence $(\textbf{F}^{(l)}(n))_{n\geq 1}$ for a fixed $l\geq 2$ with $(S,d)=(\R^{\infty},\| \cdot \|_{\infty})$, i.e. the neural network process for all units.
\end{enumerate}
Since applying Definition \ref{def:3} in a function space is not easy, we need, proved in \ztitleref{sec:appF}, the following proposition.

\begin{prop}[convergence in distribution in $C(\R^I; S)$, $(S,d)$ Polish] \label{Step1}
Suppose that $f$ and $(f(n))_{n \geq 1}$ are random elements in $C(\R^I; S)$ with $(S,d)$ Polish space. Then, $f(n)\stackrel{d}{\rightarrow}f$ if: i) $f(n)\stackrel{f_d}{\rightarrow}f$ and ii) the sequence $(f(n))_{n \geq 1}$ is uniformly tight.
\end{prop}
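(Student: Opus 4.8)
The plan is to combine the ``easy'' half of Prohorov's theorem with a subsequence argument, the glue being that on $C(\R^I;S)$, equipped with the topology of uniform convergence on compact sets, the law of a random element is completely determined by its finite-dimensional distributions.

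First I would collect the topological facts. Since $\R^I$ is separable, locally compact and $\sigma$-compact, and $(S,d)$ is Polish, the space $C(\R^I;S)$ with the compact-open topology is itself Polish: fixing an exhausting sequence of compacta $K_1\subset K_2\subset\cdots$ with $\bigcup_m K_m=\R^I$, a compatible complete metric is $\rho(g,h)=\sum_{m\geq 1}2^{-m}\bigl(1\wedge\sup_{x\in K_m}d(g(x),h(x))\bigr)$, and separability follows from that of $S$ and of $\R^I$. Moreover, if $D\subset\R^I$ is countable and dense, two elements of $C(\R^I;S)$ agreeing on $D$ agree everywhere; combining this with the fact that $C(\R^I;S)$ is a standard Borel space, one obtains that its Borel $\sigma$-algebra is generated by the evaluation maps $\{\pi_x:x\in D\}$, hence by all finite-dimensional projections $\pi_{x^{(1)},\dots,x^{(k)}}:g\mapsto(g(x^{(1)}),\dots,g(x^{(k)}))$. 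In particular, two random elements of $C(\R^I;S)$ with the same finite-dimensional distributions have the same law.

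Next, since $C(\R^I;S)$ is Polish and $(f(n))_{n\geq1}$ is uniformly tight, Prohorov's theorem gives that $(f(n))_{n\geq1}$ is relatively compact in distribution; hence any subsequence $(f(n_j))_j$ admits a further subsequence $(f(n_{j_\ell}))_\ell$ with $f(n_{j_\ell})\stackrel{d}{\rightarrow}g$ for some random element $g$ of $C(\R^I;S)$. Each finite-dimensional projection $\pi_{x^{(1)},\dots,x^{(k)}}$ is continuous from $C(\R^I;S)$ into $S^k$, so the continuous mapping theorem yields $\pi_{x^{(1)},\dots,x^{(k)}}(f(n_{j_\ell}))\stackrel{d}{\rightarrow}\pi_{x^{(1)},\dots,x^{(k)}}(g)$; but the hypothesis $f(n)\stackrel{f_d}{\rightarrow}f$ says the same quantities converge to $\pi_{x^{(1)},\dots,x^{(k)}}(f)$. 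Therefore $g$ and $f$ share all finite-dimensional distributions, hence the same law by the previous step.

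Finally I would run the usual subsequence argument to upgrade this to full convergence. If $f(n)$ failed to converge in distribution to $f$, there would exist a bounded continuous $h:C(\R^I;S)\to\R$, an $\varepsilon>0$ and a subsequence with $|\E[h(f(n_j))]-\E[h(f)]|\geq\varepsilon$ for all $j$; extracting from it a further subsequence converging in distribution to a random element with the law of $f$ would force $\E[h(f(n_{j_\ell}))]\to\E[h(f)]$, a contradiction. Hence $f(n)\stackrel{d}{\rightarrow}f$. The one genuinely delicate point is the measure-theoretic identification in the first step, namely that equality of finite-dimensional distributions implies equality of laws on $C(\R^I;S)$; the remainder is the routine Prohorov-plus-subsequence scheme, with the uniform tightness hypothesis serving precisely to activate Prohorov's theorem.
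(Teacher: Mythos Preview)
Your proof is correct and follows a genuinely different route from the paper's. The paper reduces the problem to compact domains: it invokes \cite[Proposition 16.6]{kallenberg2006foundations} to say that weak convergence in $C(\R^I;S)$ is equivalent to weak convergence in $C(K;S)$ for every compact $K\subset\R^I$, then appeals to \cite[Lemma 16.2]{kallenberg2006foundations} (which already packages together the ``finite-dimensional distributions determine the law'' step and the relative-compactness step on $C(K;S)$), applies Prohorov on the Polish space $C(K;S)$, and finally checks that uniform tightness on $C(\R^I;S)$ pushes forward to uniform tightness on each $C(K;S)$ via the continuous restriction map. You instead work globally: you verify that $C(\R^I;S)$ with the compact-open topology is itself Polish, argue directly that its Borel $\sigma$-algebra is generated by evaluations (so finite-dimensional distributions determine laws), apply Prohorov on $C(\R^I;S)$, and run the explicit subsequence argument. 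Your approach is more self-contained and avoids the reduction-to-compacta machinery; the paper's approach trades that for shorter citations to Kallenberg but then has to handle the transfer of tightness under restriction. The ``delicate point'' you flag---that equality of finite-dimensional distributions forces equality of laws on $C(\R^I;S)$---is exactly what the paper uses implicitly through \cite[Lemma 16.2]{kallenberg2006foundations}, and is also established in the paper's SM~E.3 when identifying the Borel $\sigma$-algebra with the one generated by evaluations.
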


We denoted with $\stackrel{f_d}{\rightarrow}$ the convergence in law of the finite-dimensional distributions of a sequence of stochastic processes. The notion of tightness formalizes the concept that the probability mass is not allowed to ``escape at infinity'': a single random element $f$ in a topological space $C$ is said to be tight if for each $\epsilon>0$ there exists a compact $T \subset C$ such that $\mathbb{P}[f \in C \setminus T] < \epsilon$. If a metric space $(C,\rho)$ is Polish any random element on the Borel $\sigma$-algebra of $C$ is tight. A sequence of random elements $(f(n))_{n \geq 1}$ in a topological space $C$ is said to be uniformly tight\footnote{\cite{kallenberg2006foundations} uses the same term ``tightness'' for both cases of a single random element and of sequences of random elements; we find that the introduction of ``uniform tightness'' brings more clarity.} if for every $\epsilon>0$ there exists a compact $T \subset C$ such that $\mathbb{P}[f(n) \in C \setminus T] < \epsilon$ for all $n$.

According to Proposition \ref{Step1}, to achieve convergence in distribution in function spaces we need the following Steps A-D:

\textbf{Step A)} to establish the existence of the finite-dimensional weak-limit $f$ on $\R^I$. We will rely on Theorem 5.3 of \cite{kallenberg2006foundations}, known as Levy theorem.

\textbf{Step B)} to establish the existence of the stochastic processes $f$ and $(f(n))_{n \geq 1}$ as elements in $S^{\R^I}$ the space of function from $\R^I$ into $S$. We make use of Daniell-Kolmogorov criterion \cite[Theorem 6.16]{kallenberg2006foundations}: given a family of multivariate distributions $\{P_{\mathcal{I}} \text{ probability measure on } \R^{\dim({\mathcal{I}})} \mid {\mathcal{I}} \subset \{x^{(1)}, \dots, x^{(k)}\}_{x^{(z)} \in \R^I, k \in \N}\}$ there exists a stochastic process with $\{P_{\mathcal{I}}\}$ as finite-dimensional distributions if $\{P_{\mathcal{I}}\}$ satisfies the projective property: $P_J(\cdot \times \R_{J \setminus {\mathcal{I}}}) = P_{\mathcal{I}}(\cdot), {\mathcal{I}} \subset J \subset \{x^{(1)}, \dots, x^{(k)}\}_{x^{(z)} \in \R^I, k \in \N}$. That is, it is required consistency with respect to the marginalization over arbitrary components. In this step we also suppose, for a moment, that the stochastic processes $(f(n))_{n \geq 1}$ and $f$ belong to $C(\R^I;S)$ and we establish the existence of such stochastic processes in $C(\R^I;S)$ endowed with a $\sigma$-algebra and a probability measure that will be defined.

\textbf{Step C)} to show that the stochastic processes $(f(n))_{n \geq 1}$ and $f$ belong to $C(\R^I;S)\subset S^{\R^I}$. With regards to $(f(n))_{n \geq 1}$ this is a direct consequence of (\ref{f1})-(\ref{f2}) and the continuity of $\phi$. With regards to the limiting process $f$, with an additional Lipschitz assumption on $\phi$, we rely on the following Kolmogorov-Chentsov criterion \cite[Theorem 3.23]{kallenberg2006foundations}:
    
    \begin{prop}[continuous version and local-H\"olderianity, $(S,d)$ complete] \label{Kolmogorov_Chentsov_0}
    Let $f$ be a process on $\R^I$ with values in a complete metric space $(S, d)$, and assume that there exist $a,b,H > 0$ such that,
    \begin{equation*}
    \E[d(f(x),f(y))^a] \leq H{\|x - y\|}^{(I + b)},\quad x,y \in \R^I
    \end{equation*}
    Then $f$ has a continuous version (i.e. $f$ belongs to $C(\R^I;S)$), and the latter is a.s. locally H\"older continuous with exponent $c$ for any $c \in (0, \nicefrac{b}{a})$.
    \end{prop}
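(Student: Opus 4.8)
The plan is to prove this by a dyadic chaining argument, the classical route to Kolmogorov--Chentsov. Since ``locally H\"older'' only constrains $f$ on bounded sets, it suffices to prove the statement with $\R^I$ replaced by the unit cube $Q=[0,1]^I$: the general case follows by covering $\R^I$ with countably many translated copies of $Q$ and intersecting the corresponding almost-sure events, the resulting H\"older constant being allowed to depend on the cube. So from here on I work on $Q$.

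First I would set up the dyadic grid. Let $D_m$ be the set of points of $Q$ whose coordinates are integer multiples of $2^{-m}$, and $D=\bigcup_{m\geq 1}D_m$, a countable dense subset of $Q$. Call $s,t\in D_m$ neighbors if they are adjacent in the $\ell^\infty$ sense, so that $\|s-t\|\leq \sqrt{I}\,2^{-m}$; there are $O(2^{Im})$ such pairs in $D_m$. The moment hypothesis and Markov's inequality give, for any $\delta>0$ and any neighbor pair,
\[
\P\big[d(f(s),f(t))>2^{-\delta m}\big]\leq 2^{\delta a m}\,H\,(\sqrt I\,2^{-m})^{I+b}.
\]
Summing over the $O(2^{Im})$ neighbor pairs and over $m\geq 1$ yields a convergent series precisely when $\delta a<b$. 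Fix such a $\delta$, i.e. $\delta<b/a$. By Borel--Cantelli there is an a.s. finite random $M$ such that for every $m\geq M$ and every neighbor pair $s,t\in D_m$ one has $d(f(s),f(t))\leq 2^{-\delta m}$.

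Next comes the chaining step. For $s,t\in D$ with $0<\|s-t\|\leq 2^{-M}$, choose $m\geq M$ with $2^{-(m+1)}<\|s-t\|\leq 2^{-m}$, and connect $s$ to $t$ by a telescoping path that, at each scale $j\geq m$, uses only $O_I(1)$ neighbor-steps of side $2^{-j}$ (this is the only genuinely technical bookkeeping: in dimension $I$ one refines coordinate by coordinate). The triangle inequality together with the previous paragraph gives
\[
d(f(s),f(t))\leq C_I\sum_{j\geq m}2^{-\delta j}\leq C_I'\,2^{-\delta m}\leq C_I''\,\|s-t\|^{\delta}.
\]
Hence $f|_D$ is a.s. $\delta$-H\"older, in particular uniformly continuous on the dense set $D$; by completeness of $(S,d)$ the limit $\tilde f(x):=\lim_{D\ni t\to x}f(t)$ exists for every $x\in Q$, defines an element of $C(Q;S)$, and inherits the same H\"older estimate on $Q$. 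Patching the cubes gives $\tilde f\in C(\R^I;S)$, a.s. locally $\delta$-H\"older. Finally, to see $\tilde f$ is a version of $f$: fix $x$ and take dyadic $t_n\to x$; the hypothesis gives $\E[d(f(t_n),f(x))^a]\to 0$, so $f(t_n)\to f(x)$ in probability, while $f(t_n)\to\tilde f(x)$ a.s. by construction, so $\tilde f(x)=f(x)$ a.s. Since $\delta$ was any number in $(0,b/a)$, $\tilde f$ is a.s. locally H\"older of every exponent $c\in(0,b/a)$. The main obstacle is exactly the chaining bookkeeping in dimension $I$: one must arrange that only $O(2^{Im})$ neighbor pairs enter the Borel--Cantelli estimate and only $O_I(1)$ steps per scale enter each chain, so that these polynomial-in-$2^m$ factors are absorbed and the exponent $b/a$ is not degraded.
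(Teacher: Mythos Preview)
Your proof is the classical dyadic chaining argument for the Kolmogorov--Chentsov theorem and is correct; the Borel--Cantelli count, the chaining bookkeeping, the extension via completeness of $(S,d)$, and the version argument are all in order.

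However, the paper does not actually prove this proposition. It is stated in Section~\ref{sec:sketch} as a known result, with an explicit citation to \cite[Theorem~3.23]{kallenberg2006foundations}, and is then used as a black box in the proofs of Lemma~\ref{Lemma2} and Lemma~\ref{Lemma5}. So there is nothing to compare against: you have supplied a self-contained proof where the paper simply invokes a reference. Your argument is essentially the same one Kallenberg gives, so in that sense it agrees with the paper's (outsourced) proof.
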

    
\textbf{Step D)} the uniform tightness of $(f(n))_{n\geq 1}$ in $C(\R^I;S)$. We rely on an extension of the Kolmogorov-Chentsov criterion \cite[Corollary 16.9]{kallenberg2006foundations}, which is stated in the following proposition.

\begin{prop}[uniform tightness in $C(\R^I; S)$, $(S,d)$ Polish] \label{Kolmogorov_Chentsov}
Suppose that $(f(n))_{n \geq 1}$ are random elements in $C(\R^I;S)$ with $(S,d)$ Polish space. Assume that $f(0_{\R^I},n)_{n \geq 1}$ (i.e. $f(n)$ evaluated at the origin) is uniformly tight in $S$ and that there exist $a,b,H > 0$ such that,
\begin{equation*}
\E[d(f(x,n), f(y,n))^a] \leq H{\|x - y\|}^{(I + b)},\quad x,y \in \R^I, n \in \N
\end{equation*}
uniformly in $n$. Then $(f(n))_{n \geq 1}$ is uniformly tight in $C(\R^I;S)$.
\end{prop}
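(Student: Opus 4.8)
The plan is to localize the statement to an exhausting sequence of compact cubes, apply a bounded-domain Kolmogorov--Chentsov tightness criterion on each of them, and then reassemble the resulting compact sets into a single compact subset of $C(\R^I;S)$.

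\textbf{Topological set-up.} Since $(S,d)$ is Polish, $C(\R^I;S)$ with the topology of uniform convergence on compact sets is Polish, with a compatible metric $\rho(g,h)=\sum_{m\ge1}2^{-m}\bigl(1\wedge\sup_{x\in K_m}d(g(x),h(x))\bigr)$, where $K_m=[-m,m]^I$. Write $C(K_m;S)$ for the space of continuous maps $K_m\to S$ with the uniform metric, and let $r_m\colon C(\R^I;S)\to C(K_m;S)$, $g\mapsto g|_{K_m}$, be the restriction map, which is continuous; hence $f(n)|_{K_m}=r_m(f(n))$ is a random element of $C(K_m;S)$ for each $n$. I will use the Arzel\`a--Ascoli description of compactness for this topology: a set $A\subset C(\R^I;S)$ is relatively compact iff $r_m(A)$ is relatively compact in $C(K_m;S)$ for every $m$ (the nontrivial direction being a diagonal extraction, using that each $C(K_m;S)$ is metric). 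In particular, if $T_m\subset C(K_m;S)$ is compact for every $m$, then $T:=\bigcap_{m\ge1}r_m^{-1}(T_m)$ is closed (a countable intersection of preimages of closed sets under continuous maps) and satisfies $r_m(T)\subset T_m$, hence $T$ is compact in $C(\R^I;S)$.

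\textbf{Localization to cubes.} Fix $\epsilon>0$. On each cube $K_m$, the hypothesis $\E[d(f(x,n),f(y,n))^a]\le H\|x-y\|^{I+b}$, restricted to $x,y\in K_m$, becomes after an affine change of variable sending $K_m$ onto $[0,1]^I$ exactly the hypothesis of the bounded-domain Kolmogorov--Chentsov tightness criterion, with $H$ replaced by $H(2m)^{I+b}$ and the bound still uniform in $n$; moreover $0\in K_m$ anchors the family at a point where $(f(0,n))_n$ is uniformly tight in $S$ by assumption. Invoking that criterion \cite[Corollary 16.9]{kallenberg2006foundations} (in its multi-parameter form, which follows from the same dyadic chaining estimate underlying Proposition \ref{Kolmogorov_Chentsov_0}) we obtain, for each $m$, a compact $T_m\subset C(K_m;S)$ with $\sup_{n}\P[f(n)|_{K_m}\notin T_m]<\epsilon\,2^{-m}$. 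Concretely $T_m$ may be taken of Arzel\`a--Ascoli type, $T_m=\{g:g(0)\in\Theta_m,\ \sup_{x\ne y\in K_m}d(g(x),g(y))/\|x-y\|^{\gamma}\le M_m\}$ for a fixed $\gamma\in(0,b/a)$, with $\Theta_m\subset S$ a compact set witnessing the uniform tightness of $(f(0,n))_n$ and $M_m$ obtained, via Markov's inequality, from the uniform-in-$n$ bound on the $a$-th moment of the $\gamma$-H\"older seminorm provided by the criterion.

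\textbf{Reassembly.} Set $T:=\bigcap_{m\ge1}r_m^{-1}(T_m)$, which is compact in $C(\R^I;S)$ by the set-up step. Then for every $n$,
\[
\P[f(n)\notin T]\le\sum_{m\ge1}\P\bigl[f(n)|_{K_m}\notin T_m\bigr]<\sum_{m\ge1}\epsilon\,2^{-m}=\epsilon,
\]
which is precisely the uniform tightness of $(f(n))_{n\ge1}$ in $C(\R^I;S)$.

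The step I expect to be the main obstacle is the passage from the bounded index set of the classical criterion to the unbounded $\R^I$: one must make sure that the compact sets produced cube by cube can be glued into a genuinely compact subset of $C(\R^I;S)$, which relies on the Arzel\`a--Ascoli characterization of compactness for uniform convergence on compact sets together with the summability over $m$ of the exceptional probabilities $\epsilon\,2^{-m}$. Applying \cite[Corollary 16.9]{kallenberg2006foundations} on each cube is then routine, the only bookkeeping being the harmless effect of the rescaling on the constant $H$.
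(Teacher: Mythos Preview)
The paper does not supply a proof of this proposition at all: it is stated as (an extension of) \cite[Corollary~16.9]{kallenberg2006foundations} and used as a black box. So there is no ``paper's own proof'' to compare against; your write-up is strictly more than the paper offers.

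Your argument is correct and is exactly the standard way one derives the unbounded-index-set statement from the bounded one: restrict to the exhausting cubes $K_m$, invoke the compact-domain Kolmogorov--Chentsov tightness criterion on each $K_m$ with a summable budget $\epsilon\,2^{-m}$, and intersect the pullbacks $r_m^{-1}(T_m)$, using Arzel\`a--Ascoli to see that the resulting closed set is compact in $C(\R^I;S)$. The rescaling bookkeeping and the union bound are fine.

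One remark on citations: you invoke \cite[Corollary~16.9]{kallenberg2006foundations} as your \emph{bounded-domain} ingredient, while the paper cites the very same corollary as the full $\R^I$ statement. Depending on which edition and how one reads Kallenberg, Corollary~16.9 already covers processes on $\R^d$ (with the moment bound holding on all of $\R^d$), in which case your localization step is redundant---though not wrong. If you want the argument to be genuinely self-contained, the cleaner reference for the compact-domain step is the moment-and-chaining estimate behind Theorem~3.23/Corollary~16.9 restricted to $[0,1]^I$, rather than Corollary~16.9 itself.
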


\section{Large-width functional limits}\label{sec:limit}

\subsection{Limit on $C(\R^I;S)$, with $(S,d)=(\R,| \cdot |)$, for a fixed unit $i\geq 1$ and layer $l$}\label{sec:limitR}
\begin{lemma}[finite-dimensional limit]\label{Lemma1}
If $\phi$ satisfies ($\ref{envelope}$) then there exists a stochastic process $f^{(l)}_i : \R^I \to \R$ such that $( f^{(l)}_i (n) )_{n \geq 1}\stackrel{f_d}{\rightarrow} f^{(l)}_i$ as $n \rightarrow \infty$.
\end{lemma}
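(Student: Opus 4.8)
The plan is to establish the finite-dimensional convergence $(f^{(l)}_i(n))_{n\geq1}\stackrel{f_d}{\to}f^{(l)}_i$ by induction on the layer index $l$, fixing once and for all a finite collection of distinct inputs $x^{(1)},\dots,x^{(k)}$ and working with the $k$-dimensional random vectors $f^{(l)}_i(\textbf{X},n)$ of Definition \ref{def:1}. The base case $l=1$ is immediate: $f^{(1)}_i(\textbf{X})$ is an exactly Gaussian vector (a fixed linear combination of the iid Gaussian weights $\omega^{(1)}_{i,j}$ and bias $b^{(1)}_i$), with no dependence on $n$, and moreover the family $(f^{(1)}_i(\textbf{X}))_{i\geq1}$ is iid across units. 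For the inductive step I would assume that, for layer $l-1$, the infinite sequence of $k$-dimensional vectors $(f^{(l-1)}_j(\textbf{X},n))_{j\geq1}$ converges in distribution, as $n\to\infty$, jointly over all $j$, to a sequence $(f^{(l-1)}_j(\textbf{X}))_{j\geq1}$ of iid centered Gaussian $k$-vectors with some covariance $\Sigma^{(l-1)}(\textbf{X})$; this joint-over-units formulation is what makes the induction close, since layer $l$ sees the first $n$ units of layer $l-1$.

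The heart of the argument is a characteristic-function computation, which (as the introduction advertises) replaces the exchangeable CLT of \cite{matthews2018gaussianB}. Fix $t_1,\dots,t_M\in\R^k$ and units $i_1,\dots,i_M$; I want to show $\E[\exp(\ii\sum_{m}\langle t_m, f^{(l)}_{i_m}(\textbf{X},n)\rangle_{\R^k})]$ converges to the Gaussian characteristic function with the claimed limiting covariance. Conditioning on the sigma-field $\mathcal{G}_{n}$ generated by layer $l-1$, the weights $\omega^{(l)}_{i_m,j}$ and biases $b^{(l)}_{i_m}$ are Gaussian and independent across $m$ (distinct units) and across $j$, so the conditional characteristic function factorizes and, using \eqref{caromega}–\eqref{carbias}, equals
\begin{equation*}
\exp\Big(-\tfrac12\sigma_b^2\textstyle\sum_m\|t_m\|_{\R^k}^2\Big)\prod_{j=1}^{n}\exp\Big(-\tfrac{\sigma_\omega^2}{2n}\textstyle\sum_m \big(t_m^{\mathsf T}(\phi\bullet f^{(l-1)}_j(\textbf{X},n))\big)^2\Big).
\end{equation*}
Taking $\log$, the exponent is $-\tfrac12\sigma_b^2\sum_m\|t_m\|^2 - \tfrac{\sigma_\omega^2}{2}\cdot\tfrac1n\sum_{j=1}^n \Psi(f^{(l-1)}_j(\textbf{X},n))$, where $\Psi(v)=\sum_m (t_m^{\mathsf T}(\phi\bullet v))^2$ is a fixed continuous function on $\R^k$. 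By the induction hypothesis the empirical average $\tfrac1n\sum_{j=1}^n\Psi(f^{(l-1)}_j(\textbf{X},n))$ should converge in probability to the constant $\E[\Psi(f^{(l-1)}_1(\textbf{X}))]$ — this is a law-of-large-numbers statement for a triangular array that is exchangeable in $j$ for each $n$ and asymptotically iid — and then, since the conditional characteristic function is bounded by $1$, dominated convergence gives that the unconditional characteristic function converges to $\exp(-\tfrac12\sigma_b^2\sum_m\|t_m\|^2-\tfrac{\sigma_\omega^2}{2}\E[\Psi(f^{(l-1)}_1(\textbf{X}))])$. Reading off the quadratic form in the $t_m$'s identifies this as the characteristic function of independent (across $m$, hence across units) centered Gaussians with covariance $\Sigma^{(l)}(\textbf{X})_{r,s}=\sigma_b^2+\sigma_\omega^2\,\E[\phi(f^{(l-1)}_{r,1}(\textbf{X}))\phi(f^{(l-1)}_{s,1}(\textbf{X}))]$, which closes the induction. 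Finite-dimensionality of the limit over inputs within a single vector is automatic since $k$ was arbitrary; existence of the limiting process $f^{(l)}_i$ as a bona fide object on $\R^I$ follows from Step A (Lévy's theorem gives the weak limit) together with the Daniell–Kolmogorov extension in Step B, once one checks the projective/consistency property of the family $\{\Sigma^{(l)}(\textbf{X})\}$, which holds because the $\Sigma^{(l)}$ entries are defined inputwise.

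The main obstacle is making the law-of-large-numbers step rigorous: the summands $\Psi(f^{(l-1)}_j(\textbf{X},n))$ form a triangular array whose rows are only exchangeable (the first $n$ units of layer $l-1$ share the layer-$(l-1)$ weights through the recursion) and whose marginal law drifts with $n$, so one cannot quote a textbook i.i.d.\ LLN directly. The polynomial envelope \eqref{envelope} is exactly what is needed here: it yields a uniform-in-$n$ bound on $\E[\Psi(f^{(l-1)}_j(\textbf{X},n))^2]$ (equivalently on moments of $|f^{(l-1)}_j(\textbf{X},n)|$ of order up to $2m$, obtained by an inner induction on $l$ controlling all polynomial moments — this is where one checks that Gaussian weights times a polynomially-bounded activation keep moments finite and uniformly bounded), from which a second-moment / Chebyshev argument controls $\mathrm{Var}(\tfrac1n\sum_j\Psi(\cdot))$; the covariance terms $\mathrm{Cov}(\Psi(f^{(l-1)}_j),\Psi(f^{(l-1)}_{j'}))$ for $j\neq j'$ must be shown to vanish as $n\to\infty$, which follows from asymptotic independence of distinct units in the layer-$(l-1)$ limit (part of the induction hypothesis) plus uniform integrability supplied again by the envelope bound. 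Once the $L^2$ convergence of the empirical average to its deterministic limit is in hand, the rest is the bounded-convergence wrap-up described above, and the lemma follows.
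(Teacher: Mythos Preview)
Your proposal is correct in outline and shares with the paper the use of characteristic functions, Lévy's theorem, and induction on $l$; however, the mechanics differ in an interesting way. The paper conditions on the de Finetti directing measure $p_n^{(l-1)}$ of the exchangeable sequence $(f_j^{(l-1)}(\textbf{X},n))_{j\geq1}$, rewrites the characteristic function as $\E\big[(\int e^{-y_n(f)}\,p_n^{(l-1)}(\dd f))^n\big]$, and then uses the Lagrange expansion $e^{-y}=1-y+y(1-e^{-\theta y})$ together with $(1-x/n)^n\to e^{-x}$; the induction hypothesis is phrased as weak convergence of the random measure $p_n^{(l-1)}$ to the deterministic Gaussian $q^{(l-1)}$, and three auxiliary lemmas (uniform-in-$n$ moment bounds, convergence of $\int(\textbf{t}^T(\phi\bullet f))^2 p_n^{(l-1)}(\dd f)$, and a vanishing remainder) do the work. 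Your route is more elementary: by conditioning on all of $f_{1,\dots,n}^{(l-1)}$ the conditional characteristic function is \emph{already} an exponential of a sum, so no Taylor/limit-of-powers machinery is needed, and the problem reduces to a Chebyshev-type LLN for the triangular array $\frac{1}{n}\sum_j\Psi(f_j^{(l-1)}(\textbf{X},n))$, with the covariance term handled via the joint-over-units induction hypothesis plus uniform integrability from the envelope. Both approaches ultimately rest on the same uniform moment bounds; the paper's de Finetti formulation packages exchangeability more abstractly, while yours keeps everything at the level of random vectors.

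One small slip: the bias contribution to the conditional characteristic function is $\exp\big(-\tfrac{1}{2}\sigma_b^2\sum_m (t_m^{\mathsf T}\textbf{1})^2\big)$, not $\exp\big(-\tfrac{1}{2}\sigma_b^2\sum_m\|t_m\|_{\R^k}^2\big)$, since $b_{i_m}^{(l)}$ is a scalar shared across all $k$ inputs. Your final covariance formula $\Sigma^{(l)}_{r,s}=\sigma_b^2+\sigma_\omega^2\E[\phi(\cdot)\phi(\cdot)]$ is nonetheless correct, so this is a notational inconsistency rather than a structural error.
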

\begin{proof}
Fix $l \geq 2$ and $i\geq 1$. Fixed $k$ inputs $\textbf{X}=[x^{(1)}, \dots , x^{(k)}]$, we show that as $n \rightarrow +\infty$
\begin{align}\label{thm2}
f^{(l)}_i(\textbf{X},n) \stackrel{d}{\rightarrow} N_k(\textbf{0},\Sigma(l)),
\end{align}
where $\Sigma(l)$ denotes the $k \times k$ covariance matrix, which can be computed through the recursion: 
$\Sigma(1)_{i,j} = \sigma_{b}^2+\sigma_{\omega}^2 \langle x^{(i)} , x^{(j)} \rangle_{\R^I}$, $
\Sigma(l)_{i,j}= \sigma_b^2+\sigma_{\omega}^2 \int\phi( f_i) \phi (f_j) q^{(l-1)}(\dd f)$, where $q^{(l-1)}=N_k(\textbf{0},\Sigma(l-1))$. By means of (\ref{caromega}), (\ref{carbias}), (\ref{f1}) and (\ref{f2}),
\begin{align}\label{f1k}
\begin{sistema}
f^{(1)}_{i}(\textbf{X}) \stackrel{d}{=}N_k(\textbf{0},\Sigma(1)), \quad \Sigma(1)_{i,j}=\sigma_{b}^2+\sigma_{\omega}^2 \langle x^{(i)}, x^{(j)} \rangle_{\R^I}\\
f^{(l)}_i({\textbf{X},n}) | f^{(l-1)}_{1,\dots ,n} \stackrel{d}{=}N_k(\textbf{0},\Sigma(l,n)), \quad \text{ for } l \geq 2,\\
\quad \Sigma(l,n)_{i,j}=\sigma_{b}^2+ \frac{\sigma_{\omega}^2}{n} \Big\langle  ( \phi \bullet \textbf{F}^{(l-1)}_{i}({\textbf{X},n}) ),  ( \phi \bullet \textbf{F}^{(l-1)}_{j}({\textbf{X},n}) ) \Big\rangle_{\R^n}
\end{sistema}
\end{align}
We prove (\ref{thm2}) using Levy's theorem, that is the point-wise convergence of the sequence of characteristic functions of (\ref{f1k}). We defer to \ztitleref{sec:appA} for the complete proof.
\end{proof}

Lemma \ref{Lemma1} proves \textbf{Step A}. This proof gives an alternative and self-contained proof of the main result of \cite{matthews2018gaussianB}, under the more general assumption that the activation function $\phi$ satisfies the polynomial envelop \eqref{envelope}. Now we prove \textbf{Step B}, i.e. the existence of the stochastic processes $f^{(l)}_i (n)$ and $f^{(l)}_i$ on the space $\R^{\R^I}$, for each layer $l\geq 1$, unit $i\geq 1$ and $n \in \N$. In \ztitleref{sec:appE}.1 we show that the finite-dimensional distributions of $f^{(l)}_i (n)$ satisfies Daniell-Kolmogorov criterion \cite[Theorem 6.16]{kallenberg2006foundations}, and hence the stochastic process $f^{(l)}_i (n)$ exists. In \ztitleref{sec:appE}.2 we prove a similar result for the finite-dimensional distributions of the limiting process $f_i^{(l)}$. In \ztitleref{sec:appE}.3 we prove that, if these stochastic processes are continuous, they are naturally defined in $C(\R^I;\R)$. In order to prove the continuity, i.e. \textbf{Step C} note that $f^{(1)}_i(x)=\sum_{j=1}^I \omega_{i,j}^{(1)}x_j+b^{(1)}_i$ is continuous by construction, thus by induction on $l$, if $f^{(l-1)}_i(n)$ are continuous for each $i\geq 1$ and $n$, then $f^{(l)}_i(x,n)=\frac{1}{\sqrt{n}}\sum_{j=1}^n \omega_{i,j}^{(l)}\phi(f^{(l-1)}_j(x,n))+b^{(l)}_i$ is continuous being composition of continuous functions. For the limiting process $f^{(l)}_i$ we assume $\phi$ to be Lipschitz with Lipschitz constant $L_{\phi}$. In particular we have the following:

\begin{lemma}[continuity]\label{Lemma2}
If $\phi$ is Lipschitz on $\R$ then $f^{(l)}_i(1),f^{(l)}_i(2), \dots$ are $\mathbb{P}$-a.s. Lipschitz on $\R^I$, while the limiting process $f^{(l)}_i$ is $\mathbb{P}$-a.s. continuous on $\R^I$ and locally $\gamma$-H\"older continuous for each $0< \gamma <1$.
\end{lemma}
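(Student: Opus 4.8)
The plan is to treat the two assertions separately: the a.s.\ Lipschitz property of each finite-width network $f^{(l)}_i(n)$ follows by a direct induction on $l$, while the continuity and local H\"older regularity of the limiting Gaussian process $f^{(l)}_i$ follow from the Kolmogorov--Chentsov criterion of Proposition~\ref{Kolmogorov_Chentsov_0} once a suitable moment bound on increments is established.

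For the finite-width networks, fix $n$ and induct on $l$. The base case $f^{(1)}_i(x)=\sum_{j=1}^I\omega^{(1)}_{i,j}x_j+b^{(1)}_i$ is affine, hence globally Lipschitz on $\R^I$ with the $\P$-a.s.\ finite constant $\big(\sum_{j=1}^I(\omega^{(1)}_{i,j})^2\big)^{1/2}$ by Cauchy--Schwarz. For the inductive step, assume each $f^{(l-1)}_j(n)$, $j=1,\dots,n$, is $\P$-a.s.\ Lipschitz with constant $K_{l-1,j}$; since $\phi$ is Lipschitz with constant $L_{\phi}$, the composition $\phi\circ f^{(l-1)}_j(n)$ is Lipschitz with constant $L_{\phi}K_{l-1,j}$, and by (\ref{f2}) the finite sum $f^{(l)}_i(x,n)=\tfrac{1}{\sqrt n}\sum_{j=1}^n\omega^{(l)}_{i,j}\phi(f^{(l-1)}_j(x,n))+b^{(l)}_i$ is Lipschitz with constant $\tfrac{L_{\phi}}{\sqrt n}\sum_{j=1}^n|\omega^{(l)}_{i,j}|K_{l-1,j}$, which is $\P$-a.s.\ finite because only finitely many Gaussian variables enter at each layer. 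This settles the claim for all $l\geq 1$.

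For the limiting process, recall from Lemma~\ref{Lemma1} together with Step~B that $f^{(l)}_i$ exists on $\R^{\R^I}$ and has centered jointly Gaussian finite-dimensional laws with covariances $\Sigma(l)$; note $\Sigma(l)$ is well defined since $\phi$ Lipschitz gives $|\phi(s)|\leq|\phi(0)|+L_{\phi}|s|$, so the Gaussian integrals $\int\phi(f_i)\phi(f_j)\,q^{(l-1)}(\dd f)$ converge. Hence for any inputs $x,y$ the increment $f^{(l)}_i(x)-f^{(l)}_i(y)$ is a centered univariate Gaussian, so $\E[|f^{(l)}_i(x)-f^{(l)}_i(y)|^a]=\mu_a\,v_l(x,y)^{a/2}$ for every $a>0$, where $v_l(x,y):=\E[|f^{(l)}_i(x)-f^{(l)}_i(y)|^2]$ and $\mu_a$ is the $a$-th absolute moment of a standard Gaussian. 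It thus remains to bound $v_l(x,y)$ by a constant times $\|x-y\|^2$, which is again an induction on $l$: $v_1(x,y)=\sigma_{\omega}^2\|x-y\|^2$ directly from $\Sigma(1)_{i,j}=\sigma_b^2+\sigma_{\omega}^2\langle x^{(i)},x^{(j)}\rangle_{\R^I}$; and for $l\geq 2$ the $\sigma_b^2$ contributions cancel, giving $v_l(x,y)=\sigma_{\omega}^2\int(\phi(f_x)-\phi(f_y))^2\,q^{(l-1)}(\dd f)\leq\sigma_{\omega}^2 L_{\phi}^2\int(f_x-f_y)^2\,q^{(l-1)}(\dd f)=\sigma_{\omega}^2 L_{\phi}^2\,v_{l-1}(x,y)$, since the pair marginal of $q^{(l-1)}=N_k(\textbf{0},\Sigma(l-1))$ at $(x,y)$ has variance $v_{l-1}(x,y)$. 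Therefore $v_l(x,y)\leq C_l\|x-y\|^2$ with $C_l=\sigma_{\omega}^{2l}L_{\phi}^{2(l-1)}$, so $\E[|f^{(l)}_i(x)-f^{(l)}_i(y)|^a]\leq\mu_a C_l^{a/2}\|x-y\|^a$ for all $x,y\in\R^I$. Choosing any $a>I$ matches the hypothesis of Proposition~\ref{Kolmogorov_Chentsov_0} with exponent $I+b$, $b=a-I>0$, so $f^{(l)}_i$ admits a continuous version that is $\P$-a.s.\ locally H\"older of every exponent in $(0,1-I/a)$; since $a$ is arbitrary, letting $a\to\infty$ yields $\P$-a.s.\ local $\gamma$-H\"older continuity for every $0<\gamma<1$, and in particular continuity.

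I do not expect a substantive analytic obstacle here; the only care required is in the bookkeeping: confirming that the limit is genuinely a Gaussian process (so that absolute moments of increments collapse to powers of their $L^2$ norm), that $\Sigma(l)$ is finite under the bare Lipschitz hypothesis, and that the Lipschitz constant of $\phi$ compounds multiplicatively through the layers, so that the resulting increment estimate is \emph{global} in $x,y$ as Proposition~\ref{Kolmogorov_Chentsov_0} demands.
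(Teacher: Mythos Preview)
Your proof is correct and follows the same strategy as the paper: induction on $l$ for the a.s.\ Lipschitz property of the finite-width networks $f^{(l)}_i(n)$, and the Kolmogorov--Chentsov criterion (Proposition~\ref{Kolmogorov_Chentsov_0}) for the limiting process via an inductive moment bound on increments. The only difference is that you induct on the variance $v_l(x,y)$ and invoke Gaussianity once at the end to pass to the $a$-th moment, whereas the paper inducts directly on the $2\theta$-th moment and uses Jensen's inequality at each layer; your route is marginally cleaner and yields a sharper constant (a single $\mu_a$ in place of the paper's $C_\theta^{\,l}$), but the argument is otherwise identical.
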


\begin{proof}
Here we present a sketch of the proof, and we defer to \ztitleref{sec:appB1} and \ztitleref{sec:appB2} for the complete proof. For $(f^{(l)}_i(n))_{n \geq 1}$ it is trivial to show that for each $n$
\begin{align}\label{Lipschitz}
|f^{(l)}_i(x,n)-f^{(l)}_i(y,n)|  \leq H^{(l)}_i(n)\| x-y \|_{\R^I}, \quad x,y \in \R^I,\mathbb{P}-a.s.
\end{align}
where $H^{(l)}_i(n)$ denotes a suitable random variable, which is defined by the following recursion over $l$
\begin{align}\label{Lipsch_i_n}
\begin{sistema}
H^{(1)}_i(n)=\sum_{j=1}^{I} \big| \omega_{i,j}^{(1)}\big| \\
H^{(l)}_i(n)=\frac{L_{\phi}}{\sqrt{n}} \sum_{j=1}^{n} \big| \omega_{i,j}^{(l)}\big|  H^{(l-1)}_j(n)
\end{sistema}
\end{align}
To establish the continuity of the limiting process $f^{(l)}_i$ we rely on Proposition \ref{Kolmogorov_Chentsov_0}. Take two inputs $x,y \in \R^I$. From (\ref{thm2}) we get that $[f^{(l)}_i(x),f^{(l)}_i(y)] \sim N_2(\textbf{0}, \Sigma(l))$ where
\[
 \begin{split}
& \Sigma(1)=
 \sigma_b^2
\begin{bmatrix}
 1 & 1 \\
 1 & 1 \\
 \end{bmatrix}
 +  \sigma_{\omega}^2 
\begin{bmatrix} 
\| x \|_{\R^I}^2 & \langle x , y \rangle_{\R^I} \\
\langle x , y \rangle_{\R^I} & \| y \|_{\R^I}^2 \\
\end{bmatrix},\\
 &\Sigma(l)= \sigma_b^2
\begin{bmatrix}
 1 & 1 \\
 1 & 1 \\
 \end{bmatrix}
 +  \sigma_{\omega}^2  \int
\begin{bmatrix} 
| \phi(u) |^2 &  \phi(u) \phi(v) \\
\phi(u) \phi(v) & | \phi(v) |^2 \\
\end{bmatrix}
q^{(l-1)} (\dd u, \dd v),
\end{split}
\]
where $q^{(l-1)}= N_2(\textbf{0}, \Sigma (l-1))$. Defining $\textbf{a}^T=[1,-1]$, from (\ref{thm2}) we know that  $f^{(l)}_i(y)-f^{(l)}_i(x) \sim N(\textbf{a}^T\textbf{0},\textbf{a}^T \Sigma(l) \textbf{a})$.Thus
\[
|f^{(l)}_i(y)-f^{(l)}_i(x)|^{2\theta} \sim | \sqrt{\textbf{a}^T\Sigma(l) \textbf{a}} N(0,1) |^{2\theta} \sim (\textbf{a}^T\Sigma(l) \textbf{a})^{\theta}  |N(0,1)|^{2\theta}.
\]
We proceed by induction over the layers. For $l=1$,
\[
\begin{split}
\E \Big[| f^{(1)}_i(y)-f^{(1)}_i(x) |^{2\theta} \Big] &= C_{\theta} (\textbf{a}^T\Sigma(1) \textbf{a})^{\theta}\\
&=C_{\theta} (\sigma_{\omega}^2 \|y \|_{\R^I}^{2} -2\sigma_{\omega}^2 \langle y,x \rangle_{\R^I} +\sigma_{\omega}^2 \| x \|_{\R^I}^2 )^{\theta}\\
&=C_{\theta} (\sigma_{\omega}^2)^{\theta} ( \|y \|_{\R^I}^{2} -2\langle y,x \rangle_{\R^I} + \| x \|_{\R^I}^2 )^{\theta}\\
&= C_{\theta} (\sigma_{\omega}^2)^{\theta} \|y-x \|_{\R^I}^{2\theta},
\end{split}
\]
where $C_{\theta}= \E[|N(0,1)|^{2\theta}]$. By hypothesis induction there exists a constant $H^{(l-1)}>0$ such that $\int |u-v|^{2\theta} q^{(l-1)}(\dd u, \dd v) \leq H^{(l-1)} \|y-x \|^{2\theta}_{\R^I}$. Then,
\[
\begin{split}
| f^{(l)}_i(y)-f^{(l)}_i(x) |^{2\theta} &\sim |N(0,1)|^{2\theta} (\textbf{a}^T\Sigma(l) \textbf{a})^{\theta}\\
& = |N(0,1)|^{2\theta} \Big(  \sigma^2_{\omega}\int  [ |\phi(u)|^2-2\phi(u)\phi(v)+|\phi(v)|^2 ] q^{(l-1)}(\dd u, \dd v) \Big)^{\theta}\\
& \leq |N(0,1)|^{2\theta} (\sigma^2_{\omega} L_{\phi}^2)^{\theta} \int |u-v|^{2\theta} q^{(l-1)}(\dd u, \dd v)\\
& \leq |N(0,1)|^{2\theta} (\sigma^2_{\omega} L_{\phi}^2)^{\theta} H^{(l-1)}\| y-x \|_{\R^{I}}^{2\theta}.
\end{split}
\]
where we used $|\phi(u)|^2-2\phi(u)\phi(v)+|\phi(v)|^2 = | \phi(u)-\phi(v) |^2 \leq L_{\phi}^2 |u-v|^2$ and the Jensen inequality. Thus,
\begin{align} \label{expec}
\begin{split}
\E \Big[| f^{(l)}_i(y)-f^{(l)}_i(x) |^{2\theta} \Big] \leq H^{(l)}\|y - x \|_{\R^I}^{2\theta},
\end{split}
\end{align}
where the constant $H^{(l)}$ can be explicitly derived by solving the following system
\begin{align}\label{H^l}
\begin{sistema}
H^{(1)} = C_{\theta} (\sigma^2_{\omega})^{\theta} \\
H^{(l)} = C_{\theta} (\sigma^2_{\omega} L_{\phi}^2)^{\theta} H^{(l-1)}.
\end{sistema}
\end{align}
It is easy to get $H^{(l)}= C_{\theta}^l(\sigma^2_{\omega})^{l\theta} (L_{\phi}^2)^{(l-1)\theta}$. Observe that $H^{(l)}$ does not depend on $i$ (this will be helpful in establishing the uniformly tightness of $(f^{(l)}_i(n))_{n \geq 1}$ and the continuity of $\mathbf{F}^{(l)}$). By Proposition \ref{Kolmogorov_Chentsov_0}, setting $\alpha=2\theta$, and $\beta=2\theta-I$ (since $\beta$ needs to be positive, it is sufficient to choose $\theta>I/2$) we get that $f^{(l)}_i$ has a continuous version and the latter is $\mathbb{P}$-a.s locally $\gamma$-H\"older continuous for every $0<\gamma<1-\frac{I}{2\theta}$, for each $\theta>I/2$. Taking the limit as $\theta \rightarrow +\infty$ we conclude the proof.
\end{proof}

\begin{lemma}[uniform tightness]\label{Lemma3}
If $\phi$ is Lipschitz on $\R$ then $( f^{(l)}_i(n) )_{n\geq 1}$ is uniformly tight in $C(\R^I;\R)$.
\end{lemma}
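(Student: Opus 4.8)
The plan is to apply the uniform Kolmogorov--Chentsov criterion of Proposition \ref{Kolmogorov_Chentsov} with $S=\R$. This reduces the claim to two sub-tasks: (a) showing that the sequence of real random variables $(f^{(l)}_i(0_{\R^I},n))_{n\geq 1}$ is uniformly tight in $\R$; and (b) exhibiting constants $a,b,H>0$, \emph{independent of $n$}, with $\E[|f^{(l)}_i(x,n)-f^{(l)}_i(y,n)|^{a}]\leq H\|x-y\|_{\R^I}^{I+b}$ for all $x,y\in\R^I$ and all $n$. Both are handled by induction on $l$, reusing the conditional Gaussian representation (\ref{f1k}) already exploited in the proof of Lemma \ref{Lemma2}.

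For (b), fix two inputs $x,y$ and $\textbf{a}^T=[1,-1]$ as in the proof of Lemma \ref{Lemma2}. Conditioning on $f^{(l-1)}_{1,\dots,n}$, (\ref{f1k}) gives $f^{(l)}_i(x,n)-f^{(l)}_i(y,n)\mid f^{(l-1)}_{1,\dots,n}\sim N(0,v_n)$ with $v_n=\textbf{a}^T\Sigma(l,n)\textbf{a}=\tfrac{\sigma_\omega^2}{n}\sum_{j=1}^n|\phi(f^{(l-1)}_j(x,n))-\phi(f^{(l-1)}_j(y,n))|^2$, the bias contributions cancelling since $\textbf{a}^T\textbf{1}=0$. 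Hence, for $\theta=I$, $\E[|f^{(l)}_i(x,n)-f^{(l)}_i(y,n)|^{2\theta}]=C_\theta\,\E[v_n^{\theta}]$ with $C_\theta=\E[|N(0,1)|^{2\theta}]$. Using the Lipschitz bound on $\phi$ and then the power-mean (Jensen) inequality $\big(\tfrac1n\sum_j a_j\big)^{\theta}\leq\tfrac1n\sum_j a_j^{\theta}$ (valid as $\theta\geq 1$), one gets $\E[v_n^{\theta}]\leq(\sigma_\omega^2 L_\phi^2)^{\theta}\,\tfrac1n\sum_{j=1}^{n}\E[|f^{(l-1)}_j(x,n)-f^{(l-1)}_j(y,n)|^{2\theta}]$. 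Feeding in the induction hypothesis $\E[|f^{(l-1)}_j(x,n)-f^{(l-1)}_j(y,n)|^{2\theta}]\leq H^{(l-1)}\|x-y\|_{\R^I}^{2\theta}$ — valid for every $j\leq n$ with the \emph{same} constant — yields $\E[|f^{(l)}_i(x,n)-f^{(l)}_i(y,n)|^{2\theta}]\leq C_\theta(\sigma_\omega^2 L_\phi^2)^{\theta}H^{(l-1)}\|x-y\|_{\R^I}^{2\theta}$, which is exactly the recursion (\ref{H^l}); the base case $l=1$ is the computation already carried out in Lemma \ref{Lemma2}, since $f^{(1)}_i$ does not depend on $n$. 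The crucial point, already flagged after (\ref{H^l}), is that $H^{(l)}$ depends neither on $n$ nor on $i$, so averaging over the $n$ units does not degrade the bound. With $a=2\theta=2I$ and $b=2I-I=I>0$ this is the uniform moment estimate required by Proposition \ref{Kolmogorov_Chentsov}.

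For (a), I would bound second moments uniformly in $n$, again by induction on $l$. At $l=1$, $f^{(1)}_i(0_{\R^I},n)=b^{(1)}_i\sim N(0,\sigma_b^2)$. For $l\geq 2$, conditioning as above with the single input $0_{\R^I}$, $\E[(f^{(l)}_i(0_{\R^I},n))^2]=\sigma_b^2+\tfrac{\sigma_\omega^2}{n}\sum_{j=1}^n\E[\phi(f^{(l-1)}_j(0_{\R^I},n))^2]\leq \sigma_b^2+\sigma_\omega^2\big(2\phi(0)^2+2L_\phi^2\sup_{n,j}\E[(f^{(l-1)}_j(0_{\R^I},n))^2]\big)$, using $\phi(s)^2\leq 2\phi(0)^2+2L_\phi^2 s^2$. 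Hence $c_l:=\sup_{n,i}\E[(f^{(l)}_i(0_{\R^I},n))^2]<\infty$, and Chebyshev gives $\sup_n\mathbb{P}[|f^{(l)}_i(0_{\R^I},n)|>M]\leq c_l/M^2\to 0$, so $(f^{(l)}_i(0_{\R^I},n))_{n\geq 1}$ is uniformly tight in $\R$. Combining (a) and (b), Proposition \ref{Kolmogorov_Chentsov} yields the uniform tightness of $(f^{(l)}_i(n))_{n\geq1}$ in $C(\R^I;\R)$.

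The main obstacle is not any individual estimate but making every bound uniform in the width $n$: the delicate step is controlling $\E[v_n^{\theta}]$, an expectation of a $\theta$-th power of an $n$-term empirical average of correlated summands. The power-mean inequality converts this into an average of individual moments, and the induction then closes precisely because the per-unit constant $H^{(l-1)}$ is independent of both $n$ and the unit index — the same structural feature that drives Lemma \ref{Lemma2} and, more broadly, the function-space argument.
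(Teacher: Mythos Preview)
Your proof is correct and, for the moment estimate (b), coincides step by step with the paper's argument: same conditional Gaussian reduction via (\ref{f1k}), same Lipschitz-plus-Jensen passage from the $\theta$-th power of the empirical mean to the mean of $2\theta$-th moments, and the same recursion (\ref{H^l}) yielding a constant $H^{(l)}$ free of $n$ and $i$. The only cosmetic difference is that you fix $\theta=I$, whereas the paper leaves $\theta>I/2$ generic.

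The one genuine divergence is in (a), the uniform tightness of $(f^{(l)}_i(0_{\R^I},n))_{n\geq1}$. The paper invokes Lemma \ref{Lemma1} to obtain $f^{(l)}_i(0_{\R^I},n)\stackrel{d}{\rightarrow}f^{(l)}_i(0_{\R^I})$ and then appeals to the abstract fact (Dudley, Theorem 11.5.3) that a weakly convergent sequence of tight laws is uniformly tight. You instead bound $\sup_{n,i}\E[(f^{(l)}_i(0_{\R^I},n))^2]$ directly by a layer-wise recursion using $\phi(s)^2\leq 2\phi(0)^2+2L_\phi^2 s^2$, and conclude via Chebyshev. Your route is more elementary and self-contained---it does not rely on the finite-dimensional limit having already been proved---while the paper's route is shorter because it recycles Lemma \ref{Lemma1}. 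Both are perfectly valid; your version has the mild advantage of making Lemma \ref{Lemma3} logically independent of Lemma \ref{Lemma1}.
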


\begin{proof}
We defer to \ztitleref{sec:appB3} for details. Fix $i \geq 1$, $l\geq1$. We apply Proposition \ref{Kolmogorov_Chentsov} to show the uniform tightness of the sequence $(f^{(l)}_i(n))_{n\geq 1}$ in $C(\R^I;\R)$. By Lemma \ref{Lemma2} $f^{(l)}_i(1),f^{(l)}_i(2), \dots$ are random elements in $C(\R^I;\R)$. 
Since $(\R,|\cdot|)$ is Polish, every probability measure is tight, then $f(0_{\R^I},n)$ is tight in $\R$ for every $n$. Moreover, by Lemma \ref{Lemma1} $f_i(0_{\R^I},n)_{n \geq 1}\stackrel{d}{\rightarrow}f_i^{(l)}(0_{\R^I})$, therefore by \cite[Theorem 11.5.3]{dudley2002real}, $f(0_{\R^I},n)_{n \geq 1}$ is uniformly tight in $\R$.

It remains to show that there exist two values $\alpha >0$ and $\beta >0$, and a constant $H^{(l)}>0$ such that
\[
\E \Big[| f_{i}^{(l)}(y,n)-f_{i}^{(l)}(x,n) |^{\alpha} \Big] \leq H^{(l)} \|y-x \|_{\R^I}^{I+\beta}, \quad x,y \in \R^I, n \in \N
\]
uniformly in $n$. Take two points $x,y \in \R^I$. From (\ref{f1k}) we know that $f_{i}^{(l)}(y,n)|f^{(l-1)}_{1,\dots ,n} \sim N(0,\sigma^2_y(l,n))$ and $f_{i}^{(l)}(x,n)|f^{(l-1)}_{1,\dots ,n} \sim N(0, \sigma^2_x(l,n))$ with joint distribution $N_2(\textbf{0}, \Sigma(l,n))$, where
\[
\Sigma(1)=
\begin{bmatrix}
  \sigma^2_x(1) & \Sigma(1)_{x,y} \\
  \Sigma(1)_{x,y} & \sigma^2_y(1) \\
  \end{bmatrix},
  \quad \Sigma(l)= 
\begin{bmatrix}
  \sigma^2_x(l,n) & \Sigma(l,n)_{x,y} \\
  \Sigma(l,n)_{x,y} & \sigma^2_y(l,n) \\
  \end{bmatrix},
\]
with,
\[
\begin{sistema}
\sigma^2_x(1) = \sigma_{b}^2 + \sigma_{\omega}^2 \| x \|_{\R^I}^2,\\
\sigma^2_y(1) = \sigma_{b}^2 + \sigma_{\omega}^2 \| y \|_{\R^I}^2,\\
\Sigma(1)_{x,y} = \sigma_{b}^2 + \sigma_{\omega}^2 \langle x , y \rangle_{\R^I},\\
\sigma^2_x(l,n) = \sigma_{b}^2 + \frac{\sigma_{\omega}^2}{n} \sum_{j=1}^n | \phi \circ f_j^{(l-1)} (x,n) |^2, \\
\sigma^2_y(l,n) = \sigma_{b}^2 + \frac{\sigma_{\omega}^2}{n} \sum_{j=1}^n | \phi \circ f_j^{(l-1)} (y,n) |^2, \\
\Sigma(l,n)_{x,y} = \sigma_{b}^2 + \frac{\sigma_{\omega}^2}{n} \sum_{j=1}^n \phi( f^{(l-1)}_j(x,n)) \phi(f^{(l-1)}_j(y,n))
\end{sistema}
\]
Defining $\textbf{a}^T=[1,-1]$ we have that $f_{i}^{(l)}(y,n)|f^{(l-1)}_{1,\dots ,n}-f_{i}^{(l)}(x,n)|f^{(l-1)}_{1,\dots ,n}$ is distributed as $N(\textbf{a}^T\textbf{0},\textbf{a}^T \Sigma(l,n) \textbf{a})$, where $\textbf{a}^T \Sigma(l,n) \textbf{a} = \sigma_y^2(l,n)-2\Sigma(l,n)_{x,y}+\sigma_x^2(l,n)$. Consider $\alpha=2\theta$ with $\theta$ integer. Thus
\[
\Big|f_{i}^{(l)}(y,n)|f^{(l-1)}_{1,\dots ,n}-f_{i}^{(l)}(x,n)|f^{(l-1)}_{1,\dots ,n}\Big|^{2\theta} \sim | \sqrt{\textbf{a}^T\Sigma(l,n) \textbf{a}} N(0,1) |^{2\theta} \sim (\textbf{a}^T\Sigma(l,n) \textbf{a})^{\theta}  |N(0,1)| ^{2\theta}.
\]
As in previous theorem, for $l=1$ we get
$\E \Big[| f_i^{(1)}(y,n)-f_i^{(1)}(x,n)|^{2\theta} \Big] = C_{\theta}(\sigma_{\omega}^2)^{\theta} \|y-x \|_{\R^I}^{2\theta}$
where $C_{\theta}=\E[|N(0,1)^{2\theta}|]$. Set $H^{(1)}=C_{\theta} (\sigma_{\omega}^2)^{\theta}$ and by hypothesis induction suppose that for every $j\geq 1$
\[
\E \Big[| f^{(l-1)}_j (y,n)- f^{(l-1)}_j (x,n) |^{2\theta}\Big] \leq H^{(l-1)} \| y-x \|_{\R^I}^{2\theta}.
\]
By hypothesis $\phi$ is Lipschitz, then
\[
\begin{split}
\E\Big[ | f_{i}^{(l)}(y,n)-f_{i}^{(l)}(x,n) |^{2\theta} \Big|f^{(l-1)}_{1,\dots ,n} \Big] & = C_{\theta} (\textbf{a}^T\Sigma(l,n) \textbf{a})^{\theta}\\
&=C_{\theta}  \Big( \sigma_y^2(l,n)-2\Sigma(l,n)_{x,y}+\sigma_x^2(l,n)  \Big)^{\theta}\\
&= C_{\theta}  \Big( \frac{\sigma^2_{\omega}}{n} \sum_{j=1}^n \Big|  \phi \circ f_j^{(l-1)} (y,n) - \phi \circ f_j^{(l-1)} (x,n) \Big|^2  \Big)^{\theta} \\
&\leq C_{\theta}  \Big( \frac{\sigma^2_{\omega}L_{\phi}^2}{n} \sum_{j=1}^n\Big|  f_j^{(l-1)} (y,n)  - f_j^{(l-1)} (x,n)  \Big|^2 \Big)^{\theta}\\
&= C_{\theta}  \frac{(\sigma^2_{\omega}L_{\phi}^2)^{\theta}}{n^{\theta}} \Big( \sum_{j=1}^n\Big|  f_j^{(l-1)} (y,n)  - f_j^{(l-1)} (x,n)  \Big|^2 \Big)^{\theta}\\
&\leq C_{\theta}  \frac{(\sigma^2_{\omega}L_{\phi}^2)^{\theta}}{n} \sum_{j=1}^n\Big|  f_j^{(l-1)} (y,n)  - f_j^{(l-1)} (x,n)  \Big|^{2\theta}.\\
\end{split}
\]
Using the induction hypothesis 
\[
\begin{split}
\E \Big[ | f_{i}^{(l)}(y,n)-f_{i}^{(l)}(x,n) |^{2\theta} \Big] &=\E \Big[ \E \Big[ | f_{i}^{(l)}(y,n)-f_{i}^{(l)}(x,n) |^{2\theta} \Big|f^{(l-1)}_{1,\dots ,n} \Big]\Big] \\
& \leq C_{\theta} \frac{(\sigma^2_{\omega} L_{\phi}^2)^{\theta}}{n} \sum_{j=1}^n \E \Big[ |  f_j^{(l-1)} (y,n)  - f_j^{(l-1)} (x,n) |^{2\theta} \Big] \\
& \leq C_{\theta} (\sigma^2_{\omega} L_{\phi}^2)^{\theta} H^{(l-1)} \| y-x \|^{2\theta}_{\R^I}.
\end{split}
\]
We can get the constant $H^{(l)}$ by solving the same system as (\ref{H^l}), obtaining $H^{(l)}= C_{\theta}^l(\sigma^2_{\omega})^{l\theta} (L_{\phi}^2)^{(l-1)\theta}$ which does not depend on $n$. By Proposition \ref{Kolmogorov_Chentsov} setting $\alpha=2\theta$ and $\beta= 2\theta-I$, since $\beta$ must be a positive constant, it is sufficient to take $\theta >I/2$ and this concludes the proof.
\end{proof}
Note that Lemma \ref{Lemma3} provides the last \textbf{Step D} that allows us to prove the desired result which is explained in the theorem that follows:
\begin{theorem}[functional limit]\label{THM1}
If $\phi$ is Lipschitz on $\R$ then $f^{(l)}_i(n) \stackrel{d}{\rightarrow} f^{(l)}_i$ on $C(\R^I;\R)$.
\end{theorem}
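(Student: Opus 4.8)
The plan is to invoke Proposition~\ref{Step1}, which reduces the desired weak convergence in $C(\R^I;\R)$ to two ingredients: convergence of the finite-dimensional distributions, and uniform tightness of the sequence $(f^{(l)}_i(n))_{n\geq 1}$. Both have already been secured in the preceding lemmas, so the proof is essentially an assembly step.

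First I would note that Proposition~\ref{Step1} is applicable because, by Lemma~\ref{Lemma2}, each $f^{(l)}_i(n)$ is $\mathbb{P}$-a.s. Lipschitz on $\R^I$, hence a genuine random element of $C(\R^I;\R)$; likewise the candidate limit $f^{(l)}_i$ is $\mathbb{P}$-a.s. continuous by the same lemma, so it too lives in $C(\R^I;\R)$. The space $(\R,|\cdot|)$ is Polish, so the hypotheses of Proposition~\ref{Step1} are met. Second, I would cite Lemma~\ref{Lemma1}, which gives $f^{(l)}_i(n)\stackrel{f_d}{\rightarrow}f^{(l)}_i$ under the polynomial envelope condition, and a fortiori under the stronger Lipschitz assumption of the present theorem (a Lipschitz $\phi$ satisfies \eqref{envelope} with $m=1$). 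Third, I would cite Lemma~\ref{Lemma3}, which gives uniform tightness of $(f^{(l)}_i(n))_{n\geq 1}$ in $C(\R^I;\R)$ under the Lipschitz assumption. Applying Proposition~\ref{Step1} with these two inputs yields $f^{(l)}_i(n)\stackrel{d}{\rightarrow}f^{(l)}_i$ on $C(\R^I;\R)$, which is exactly the assertion.

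There is essentially no obstacle here: the genuine work — the characteristic-function argument for the finite-dimensional limit (Lemma~\ref{Lemma1}), the Kolmogorov--Chentsov moment bound for continuity of the limit (Lemma~\ref{Lemma2}), and the uniform-in-$n$ moment bound for tightness (Lemma~\ref{Lemma3}) — has been done in advance. The only point requiring a word of care is the consistency of the limit object: one should check that the finite-dimensional limit produced in Lemma~\ref{Lemma1} is precisely the law of the continuous process $f^{(l)}_i$ whose existence is established in Lemma~\ref{Lemma2} (via Step~B and Proposition~\ref{Kolmogorov_Chentsov_0}), so that ``$\stackrel{f_d}{\rightarrow}f^{(l)}_i$'' and ``$\stackrel{d}{\rightarrow}f^{(l)}_i$'' refer to the same limiting random element of $C(\R^I;\R)$. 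This identification is immediate because both are characterized by the same Gaussian finite-dimensional distributions with covariance recursion $\Sigma(l)$. Once that is observed, the theorem follows in one line from Proposition~\ref{Step1}.
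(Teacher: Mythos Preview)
Your proposal is correct and follows exactly the paper's own proof: apply Proposition~\ref{Step1} after invoking Lemma~\ref{Lemma2} for membership in $C(\R^I;\R)$, Lemma~\ref{Lemma1} for finite-dimensional convergence, and Lemma~\ref{Lemma3} for uniform tightness. Your additional remarks (that Lipschitz implies the polynomial envelope, and that the finite-dimensional limit is the same Gaussian law as the continuous version) are sound clarifications but not present in the paper's terse argument.
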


\begin{proof}
We apply Proposition \ref{Step1} to $(f_i^{(l)}(n))_{n \geq 1}$. By Lemma \ref{Lemma2}, we have that $f^{(l)}_i,( f^{(l)}_i (n) )_{n \geq 1}$ belong to $C(\R^I;\R)$. From Lemma \ref{Lemma1} we have the convergence of the finite-dimensional distributions of $(f_i^{(l)}(n))_{n \geq 1}$, and form Lemma \ref{Lemma3} we have the uniform tightness of $(f_i^{(l)}(n))_{n \geq 1}$.
\end{proof}

\subsection{Limit on $C(\R^I;S)$, with $(S,d)=(\R^{\infty},\| \cdot \|_{\R^{\infty}})$, for a fixed layer $l$}\label{sec:limitRinfinit}

As in the previous section we prove Steps A-D for the sequence $(\textbf{F}^{(l)}(n))_{n\geq 1}$. Remark that each stochastic process $\textbf{F}^{(l)},\textbf{F}^{(l)}(1), \textbf{F}^{(l)}(2), \dots$ defines on $C(\R^I;\R^{\infty})$ a joint measure whose $i$-th marginal is the measure induced respectively by $f^{(l)}_i,f^{(l)}_i(n),f^{(2)}_i(n), \dots$ (see \ztitleref{sec:appE}.1 -\ztitleref{sec:appE}.4). Let $\textbf{F}^{(l)}\stackrel{d}{=}\bigotimes_{i=1}^{\infty}f_i^{(l)}$, where $\bigotimes$ denotes the product measure.

\begin{lemma}[finite-dimensional limit]\label{Lemma4} If $\phi$ satisfies (\ref{envelope})
 then $\textbf{F}^{(l)}(n) \stackrel{f_d}{\rightarrow} \textbf{F}^{(l)}$ as $n \rightarrow \infty$.
\end{lemma}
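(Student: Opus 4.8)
The plan is to turn the statement into a claim about characteristic functions on finite-dimensional Euclidean spaces and then argue by induction over the layers, paralleling the proof of Lemma \ref{Lemma1} but tracking finitely many units jointly. Fix $k$ distinct inputs $\textbf{X}=[x^{(1)},\dots,x^{(k)}]$. Since the metric $d_\infty$ on $\R^\infty$ induces the product topology, weak convergence in $(\R^\infty)^k$ is equivalent to weak convergence in $\R^{mk}$ of every finite sub-array $(f^{(l)}_i(x^{(r)},n))_{1\le i\le m,\,1\le r\le k}$, and by Lévy's continuity theorem it suffices to prove that, for every $m\ge1$ and every $\lambda=(\lambda_1,\dots,\lambda_m)$ with $\lambda_i\in\R^k$,
\begin{equation*}
\E\Big[\exp\Big(\ii\sum_{i=1}^m\langle\lambda_i,f^{(l)}_i(\textbf{X},n)\rangle_{\R^k}\Big)\Big]\;\longrightarrow\;\prod_{i=1}^m\exp\Big(-\tfrac12\lambda_i^T\Sigma(l)\lambda_i\Big)\qquad\text{as }n\to\infty,
\end{equation*}
the right-hand side being the characteristic function of $\bigotimes_{i=1}^m N_k(\textbf{0},\Sigma(l))$. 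For $l=1$ this is an identity, since $f^{(1)}_1(\textbf{X}),f^{(1)}_2(\textbf{X}),\dots$ are i.i.d.\ $N_k(\textbf{0},\Sigma(1))$; this is the base of an induction on $l$, the inductive hypothesis at level $l-1$ being exactly the displayed convergence with $l-1$ in place of $l$ (for all $m$).

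For the inductive step I would condition on the $\sigma$-algebra $\mathcal{G}_n$ generated by the first $n$ units of layer $l-1$. By (\ref{f1k}) and the mutual independence across $i$ of the families $(\omega^{(l)}_{i,1},\dots,\omega^{(l)}_{i,n},b^{(l)}_i)$ (which are also independent of $\mathcal{G}_n$), conditionally on $\mathcal{G}_n$ the vectors $f^{(l)}_1(\textbf{X},n),\dots,f^{(l)}_m(\textbf{X},n)$ are independent, each $N_k(\textbf{0},\Sigma(l,n))$ with the \emph{same} random matrix $\Sigma(l,n)$. Therefore
\begin{equation*}
\E\Big[\exp\Big(\ii\sum_{i=1}^m\langle\lambda_i,f^{(l)}_i(\textbf{X},n)\rangle_{\R^k}\Big)\,\Big|\,\mathcal{G}_n\Big]=\exp\Big(-\tfrac12\sum_{i=1}^m\lambda_i^T\Sigma(l,n)\lambda_i\Big),
\end{equation*}
and since the right-hand side is bounded by $1$ and $t\mapsto e^{-t/2}$ is continuous on $\R_{\ge0}$, the whole statement follows by bounded convergence once we show that $\Sigma(l,n)\to\Sigma(l)$ in probability, entrywise.

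The entrywise convergence $\Sigma(l,n)_{r,s}=\sigma_b^2+\frac{\sigma_\omega^2}{n}\sum_{j=1}^n\phi(f^{(l-1)}_j(x^{(r)},n))\phi(f^{(l-1)}_j(x^{(s)},n))\to\sigma_b^2+\sigma_\omega^2\int\phi(f_r)\phi(f_s)\,q^{(l-1)}(\dd f)$ is the core of the argument, and I would establish it in $L^2$. Put $Y^{(n)}_j=\phi(f^{(l-1)}_j(x^{(r)},n))\phi(f^{(l-1)}_j(x^{(s)},n))$; the $Y^{(n)}_1,\dots,Y^{(n)}_n$ are exchangeable, so
\begin{equation*}
\mathrm{Var}\Big(\tfrac1n\textstyle\sum_{j=1}^n Y^{(n)}_j\Big)=\tfrac1n\,\mathrm{Var}(Y^{(n)}_1)+\tfrac{n-1}{n}\,\mathrm{Cov}(Y^{(n)}_1,Y^{(n)}_2).
\end{equation*}
A preliminary lemma, proved by induction on the layer from the conditional Gaussianity in (\ref{f1k}), the identity $\E[|N(0,\sigma^2)|^p]=c_p\,\sigma^p$, Jensen's inequality and the envelope bound $|\phi(s)|\le a+b|s|^m$, gives $\sup_{n,j}\E[|\phi(f^{(l-1)}_j(x,n))|^p]<\infty$ for every $p\ge1$. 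These uniform moment bounds make $\frac1n\mathrm{Var}(Y^{(n)}_1)\to0$ and supply the uniform integrability needed to upgrade the inductive hypothesis (which is in distribution) to convergence of moments: applied to a single unit it yields $\E[Y^{(n)}_1]\to\int\phi(f_r)\phi(f_s)\,q^{(l-1)}(\dd f)$, and applied to two units it yields $\E[Y^{(n)}_1Y^{(n)}_2]\to(\int\phi(f_r)\phi(f_s)\,q^{(l-1)}(\dd f))^2$, whence $\mathrm{Cov}(Y^{(n)}_1,Y^{(n)}_2)\to0$. Hence $\frac1n\sum_{j=1}^n Y^{(n)}_j\to\int\phi(f_r)\phi(f_s)\,q^{(l-1)}(\dd f)$ in $L^2$, which gives $\Sigma(l,n)\to\Sigma(l)$ in probability and closes the induction.

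The main obstacle is precisely this $L^2$ law of large numbers: the units $f^{(l-1)}_j(\cdot,n)$ are only exchangeable, not i.i.d., and their joint law varies with $n$, so one cannot invoke a classical LLN. What makes it go through is the simultaneous propagation through the layers of (i) uniform-in-$n$ polynomial moment bounds under the mere envelope assumption (\ref{envelope}), and (ii) the asymptotic independence of any two units, which is nothing but the $m=2$ instance of the very convergence being proved. Setting up the induction so that the moment lemma and the characteristic-function computation support each other is the delicate bookkeeping; the rest is bounded convergence and Lévy's theorem.
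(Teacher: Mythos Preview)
Your argument is correct and shares the paper's overall architecture---reduce to characteristic functions of finite sub-arrays, condition on the previous layer, and propagate by induction on $l$---but the two main technical pieces are handled by different means. For the joint characteristic function the paper applies Cram\'er--Wold to the scalar combinations $\sum_{i\in\mathcal L}p_i[f^{(l)}_i(\textbf{X},n)-b^{(l)}_i\textbf{1}]$ and observes that the resulting computation is literally that of Lemma~\ref{Lemma1} with the substitutions $\sigma_b^2\leftarrow 0$, $\sigma_\omega^2\leftarrow(p^Tp)\sigma_\omega^2$; you instead use that, given $\mathcal G_n$, the vectors $f^{(l)}_1(\textbf{X},n),\dots,f^{(l)}_m(\textbf{X},n)$ are conditionally i.i.d.\ $N_k(\textbf{0},\Sigma(l,n))$, so the conditional characteristic function factors exactly and a single application of bounded convergence reduces everything to $\Sigma(l,n)\to\Sigma(l)$ in probability. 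For that law of large numbers the paper phrases the induction hypothesis as weak convergence of the de~Finetti directing measure $p^{(l-1)}_n\Rightarrow q^{(l-1)}$ and passes to the limit in integrals via its uniform moment bounds (its Lemma~L1), together with a Lagrange-remainder estimate (its Lemma~L3); you prove it by a direct $L^2$ computation on the exchangeable summands, extracting the asymptotic decorrelation $\mathrm{Cov}(Y^{(n)}_1,Y^{(n)}_2)\to0$ from the $m=2$ case of the joint convergence being propagated. Your route avoids both the de~Finetti machinery and Lemma~L3, at the cost of carrying the full ``for all $m$'' statement as the induction hypothesis---which is natural, since that is precisely what Lemma~\ref{Lemma4} asserts---while the paper's route has the virtue of recycling the single-unit computation of Lemma~\ref{Lemma1} verbatim.
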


\begin{proof}
The proof follows by Lemma \ref{Lemma1} and Cram\'er-Wold theorem for finite-dimensional projection of $\textbf{F}^{(l)}(n)$: it is sufficient to establish the large $n$ asymptotic of linear combinations of the $f^{(l)}_i(\textbf{X},n)$'s for $i \in \mathcal{L} \subset \N$. In particular, we show that for any choice of inputs elements $\textbf{X}$, as $n \rightarrow +\infty$
\begin{align}
\label{product}
\textbf{F}^{(l)}(\textbf{X},n) \stackrel{d}{\rightarrow} \bigotimes_{i=1}^{\infty} N_k(\textbf{0},\Sigma(l)),
\end{align}
where $\Sigma(l)$ is defined in (\ref{thm2}). The proof is reported in \ztitleref{sec:appC}.
\end{proof}

\begin{lemma}[continuity]\label{Lemma5}
If $\phi$ is Lipschitz on $\R$ then $\textbf{F}^{(l)},( \textbf{F}^{(l)} (n) )_{n \geq 1}$ belong to $C(\R^I;\R^{\infty})$. More precisely $\textbf{F}^{(l)}(1),\textbf{F}^{(l)}(2), \dots$ are $\mathbb{P}$-a.s. Lipschitz on $\R^I$, while the limiting process $\textbf{F}^{(l)}$ is $\mathbb{P}$-a.s. continuous on $\R^I$ and locally $\gamma$-H\"older continuous for each $0< \gamma <1$.
\end{lemma}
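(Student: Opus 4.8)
The plan is to mirror, at the level of the $\mathbb{R}^\infty$-valued process $\textbf{F}^{(l)}$, the two-part argument already carried out for the scalar processes $f^{(l)}_i$ in Lemma \ref{Lemma2}. The statement has two halves: (a) the finite-width processes $\textbf{F}^{(l)}(n)$ are $\mathbb{P}$-a.s.\ Lipschitz on $\R^I$; (b) the limiting process $\textbf{F}^{(l)}$ has a continuous, indeed locally $\gamma$-H\"older, version for every $0<\gamma<1$. For part (a), I would use the scalar Lipschitz bounds \eqref{Lipschitz}--\eqref{Lipsch_i_n} componentwise: on the $\mathbb{P}$-a.s.\ event that all $f^{(l)}_i(\cdot,n)$ are Lipschitz with constants $H^{(l)}_i(n)$, one has $\xi(|f^{(l)}_i(x,n)-f^{(l)}_i(y,n)|)\le \min\{1, H^{(l)}_i(n)\|x-y\|_{\R^I}\}$, and summing the weighted series $d_\infty$ over $i$ with the $2^{-i}$ weights gives a modulus of continuity for $\textbf{F}^{(l)}(n)$ as a map into $(\R^\infty,d_\infty)$; since each term is Lipschitz and the series converges uniformly on bounded sets, $\textbf{F}^{(l)}(n)$ is (locally) Lipschitz, hence continuous, into $\R^\infty$.

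For part (b), the natural route is Proposition \ref{Kolmogorov_Chentsov_0} applied directly to $\textbf{F}^{(l)}$ with values in the complete metric space $(\R^\infty,d_\infty)$. I need a bound of the form $\E[d_\infty(\textbf{F}^{(l)}(x),\textbf{F}^{(l)}(y))^a]\le H\|x-y\|_{\R^I}^{I+b}$. Here the crucial input is the observation, emphasized at the end of the proof of Lemma \ref{Lemma2}, that the constant $H^{(l)}$ in \eqref{expec} does \emph{not} depend on the unit index $i$: we have $\E[|f^{(l)}_i(y)-f^{(l)}_i(x)|^{2\theta}]\le H^{(l)}\|y-x\|^{2\theta}_{\R^I}$ with the same $H^{(l)}=C_\theta^l(\sigma^2_\omega)^{l\theta}(L_\phi^2)^{(l-1)\theta}$ for all $i$. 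Using $\xi(t)\le t$ and $\xi(t)\le 1$, one bounds $d_\infty(\textbf{F}^{(l)}(x),\textbf{F}^{(l)}(y))=\sum_{i\ge1}2^{-i}\xi(|f^{(l)}_i(x)-f^{(l)}_i(y)|)$; raising to the power $2\theta$ and applying Jensen to the probability measure $\{2^{-i}\}_{i\ge1}$ (after normalizing) gives $d_\infty(\cdots)^{2\theta}\le C\sum_{i\ge1}2^{-i}|f^{(l)}_i(x)-f^{(l)}_i(y)|^{2\theta}$, and taking expectations and using the uniform-in-$i$ bound yields $\E[d_\infty(\textbf{F}^{(l)}(x),\textbf{F}^{(l)}(y))^{2\theta}]\le C\,H^{(l)}\|x-y\|^{2\theta}_{\R^I}$. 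Then, exactly as in Lemma \ref{Lemma2}, set $a=2\theta$, $b=2\theta-I$, require $\theta>I/2$, invoke Proposition \ref{Kolmogorov_Chentsov_0} to get a continuous version that is locally $\gamma$-H\"older for $\gamma<1-I/(2\theta)$, and let $\theta\to\infty$. Finally, since $\textbf{F}^{(l)}\stackrel{d}{=}\bigotimes_{i=1}^\infty f^{(l)}_i$ and each $f^{(l)}_i$ already lives in $C(\R^I;\R)$, the joint process is consistently defined on $C(\R^I;\R^\infty)$ (the details of this identification are deferred to \ztitleref{sec:appE}).

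The main obstacle is the passage from the scalar moment bounds to a single moment bound for $d_\infty$ that is still a clean power of $\|x-y\|_{\R^I}$: one must handle the nonlinearity of $\xi$ and the infinite sum simultaneously. The device that makes this work is precisely that the bound $H^{(l)}$ is uniform in $i$, so the geometric weights $2^{-i}$ render the sum finite without destroying the $\|x-y\|^{2\theta}$ scaling; the truncation $\xi\le1$ is only needed to guarantee convergence of $d_\infty$ itself and plays no role in the H\"older exponent. I would write the details of the Jensen/summation step and the induction bookkeeping in \ztitleref{sec:appB1}--\ztitleref{sec:appB2}.
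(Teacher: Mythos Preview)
Your approach to part (b) --- continuity and local H\"older regularity of the limiting process $\textbf{F}^{(l)}$ --- is essentially identical to the paper's: bound $d_\infty$ via $\xi(t)\le t$, apply Jensen with the probability weights $2^{-i}$ (they already sum to $1$, so no normalization is needed and your constant $C$ is $1$), exploit that $H^{(l)}$ in \eqref{expec} is uniform in $i$, and invoke Proposition \ref{Kolmogorov_Chentsov_0} letting $\theta\to\infty$. The paper adds one small step you leave implicit: monotone convergence is invoked to justify exchanging $\E$ and $\sum_{i\ge1}$.

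For part (a), however, there is a genuine gap. The lemma asserts that each $\textbf{F}^{(l)}(n)$ is $\mathbb{P}$-a.s.\ \emph{Lipschitz} into $(\R^\infty,d_\infty)$, not merely continuous. Your bound $\xi(|f^{(l)}_i(x,n)-f^{(l)}_i(y,n)|)\le\min\{1,H^{(l)}_i(n)\|x-y\|_{\R^I}\}$, summed with weights $2^{-i}$, yields by dominated convergence only that $d_\infty(\textbf{F}^{(l)}(x,n),\textbf{F}^{(l)}(y,n))\to 0$ as $\|x-y\|\to 0$, i.e.\ continuity. To obtain an honest Lipschitz bound $d_\infty\le H^{(l)}(n)\|x-y\|_{\R^I}$ you need the random series $\sum_{i\ge1}2^{-i}H^{(l)}_i(n)$ to be finite almost surely, and this is not automatic: the $H^{(l)}_i(n)$ are random, unbounded in $i$, and involve the independent weights $\omega^{(l)}_{i,j}$ across all $i$. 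The paper closes this by unrolling the recursion \eqref{Lipsch_i_n} and applying Kolmogorov's three-series theorem to show $\sum_{i\ge1}2^{-i}|\omega^{(l)}_{i,j}|<\infty$ a.s., from which $\sum_{i\ge1}2^{-i}H^{(l)}_i(n)<\infty$ a.s.\ follows. Your phrase ``the series converges uniformly on bounded sets'' does not address this: uniform convergence of the partial sums follows trivially from $\xi\le 1$, but a uniformly convergent series of Lipschitz functions need not have a Lipschitz limit unless the Lipschitz constants are summable. That said, the weaker continuity conclusion your argument does yield is all that is strictly needed for $\textbf{F}^{(l)}(n)\in C(\R^I;\R^\infty)$ and hence for Theorem \ref{THM2}; the Lipschitz claim is a sharper statement that requires the extra three-series step.
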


\begin{proof}
It derives immediately from Lemma \ref{Lemma2}. We defer to \ztitleref{sec:appD1} and \ztitleref{sec:appD2} for details. The continuity of the sequence process immediately follows from the Lipschitzianity of each component in (\ref{Lipschitz}) while the continuity of the limiting process $\textbf{F}^{(l)}$ is proved by applying Proposition \ref{Kolmogorov_Chentsov_0}. Take two inputs $x,y \in \R^I$ and fix $\alpha \geq 1$ even integer. Since $\xi(t)\leq t$ for all $t \geq 0$, and by Jensen inequality 
\[
d\big(\textbf{F}^{(l)}(x),\textbf{F}^{(l)}(y)\big)_{\infty} ^{\alpha} \leq \Big( \sum_{i=1}^{\infty}\frac{1}{2^i}|f_i^{(l)}(x)-f^{(l)}_i(y)|\Big)^{\alpha} \leq \sum_{i=1}^{\infty}\frac{1}{2^i} |f_i^{(l)}(x)-f^{(l)}_i(y)|^{\alpha} 
\]

Thus, by applying monotone convergence theorem to the positive increasing sequence $g(N)=\sum_{i=1}^{N}\frac{1}{2^i} |f_i^{(l)}(x)-f^{(l)}_i(y)|^{\alpha}$ (which allows to exchange $\E$ and $\sum_{i=1}^{\infty}$), we get

\[
\begin{split}
\E \Big[ d\big(\textbf{F}^{(l)}(x),&\textbf{F}^{(l)}(y)\big)_{\infty} ^{\alpha} \Big] \leq \E \Big[ \sum_{i=1}^{\infty}\frac{1}{2^i} |f^{(l)}(x)-f^{(l)}_i(y)|^{\alpha} \Big] = \lim_{N \rightarrow \infty} \E \Big[ \sum_{i=1}^{N}\frac{1}{2^i} |f_i^{(l)}(x)-f^{(l)}_i(y)|^{\alpha} \Big] \\
& = \sum_{i=1}^{\infty}\frac{1}{2^i} \E \Big[ |f_i^{(l)}(x)-f^{(l)}_i(y)|^{\alpha} \Big] = \sum_{i=1}^{\infty}\frac{1}{2^i} H^{(l)} \|x - y \|^{\alpha}_{\R^I} = H^{(l)} \|x - y \|_{\R^I}^{\alpha} 
\end{split}
\]
where we used (\ref{expec}) and the fact that $H^{(l)}$ does not depend on $i$ (see (\ref{H^l})). Therefore, by Proposition \ref{Kolmogorov_Chentsov_0}, for each $\alpha > I$, setting $\beta=\alpha-I$ (since $\beta$ needs to be positive, it is sufficient to choose $\alpha>I$) $\textbf{F}^{(l)}$ has a continuous version $\textbf{F}^{(l)(\theta)}$ which is $\mathbb{P}$-a.s locally $\gamma$-H\"older continuous for every $0<\gamma<1-\frac{I}{\alpha}$. Letting $\alpha \rightarrow \infty$ we conclude.
\end{proof}

\begin{theorem}[functional limit]\label{THM2}
If $\phi$ is Lipschitz on $\R$ then $(\textbf{F}^{(l)}(n))_{n\geq1} \stackrel{d}{\rightarrow} \textbf{F}^{(l)}$ as $n \rightarrow \infty$ on $C(\R^I;\R^{\infty})$.
\end{theorem}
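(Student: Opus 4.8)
The plan is to apply Proposition \ref{Step1} to the sequence $(\textbf{F}^{(l)}(n))_{n\geq1}$ in the function space $C(\R^I;\R^\infty)$, exactly mirroring the proof of Theorem \ref{THM1} but now in the larger Polish space $(S,d)=(\R^\infty,\|\cdot\|_{\R^\infty})$. The three ingredients required by Proposition \ref{Step1} are: (a) all processes live in $C(\R^I;\R^\infty)$; (b) convergence of finite-dimensional distributions $\textbf{F}^{(l)}(n)\stackrel{f_d}{\rightarrow}\textbf{F}^{(l)}$; (c) uniform tightness of $(\textbf{F}^{(l)}(n))_{n\geq1}$. Items (a) and (b) are already in hand: (a) is Lemma \ref{Lemma5} and (b) is Lemma \ref{Lemma4}. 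So the only genuinely new work is establishing the uniform tightness (c), which is the analogue of Lemma \ref{Lemma3} but for the $\R^\infty$-valued process.

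For the uniform tightness I would invoke Proposition \ref{Kolmogorov_Chentsov} (the Kolmogorov--Chentsov criterion for uniform tightness in $C(\R^I;S)$ with $S$ Polish). Two hypotheses must be checked. First, uniform tightness at the origin: $(\textbf{F}^{(l)}(0_{\R^I},n))_{n\geq1}$ must be uniformly tight in $\R^\infty$. This follows because $\R^\infty$ with the product topology is Polish, each $\textbf{F}^{(l)}(0_{\R^I},n)$ is tight, and by Lemma \ref{Lemma4} the sequence converges in distribution to $\textbf{F}^{(l)}(0_{\R^I})$, so by \cite[Theorem 11.5.3]{dudley2002real} (or, since $\R^\infty$ is Polish, Prokhorov's theorem) the convergent sequence is uniformly tight. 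Second, the moment bound: there must exist $\alpha,\beta,H>0$ with $\E[d(\textbf{F}^{(l)}(x,n),\textbf{F}^{(l)}(y,n))^\alpha]\leq H\|x-y\|^{I+\beta}$ uniformly in $n$. Here I would use the same computation as in Lemma \ref{Lemma5}: taking $\alpha$ an even integer, $\xi(t)\le t$, and Jensen's inequality on the convex weights $2^{-i}$,
\[
d\big(\textbf{F}^{(l)}(x,n),\textbf{F}^{(l)}(y,n)\big)_\infty^\alpha \leq \sum_{i=1}^\infty \frac{1}{2^i}\,\big|f_i^{(l)}(x,n)-f_i^{(l)}(y,n)\big|^\alpha,
\]
and then, after exchanging $\E$ and $\sum$ by monotone convergence, apply the per-component bound from the proof of Lemma \ref{Lemma3}, namely $\E[|f_i^{(l)}(x,n)-f_i^{(l)}(y,n)|^{2\theta}]\leq H^{(l)}\|x-y\|_{\R^I}^{2\theta}$ with $H^{(l)}$ \emph{independent of both $i$ and $n$}. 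Summing $\sum_i 2^{-i}=1$ collapses the bound to $H^{(l)}\|x-y\|_{\R^I}^\alpha$, uniformly in $n$. Choosing $\alpha=2\theta>I$ and $\beta=2\theta-I>0$ satisfies the hypothesis of Proposition \ref{Kolmogorov_Chentsov}, yielding uniform tightness in $C(\R^I;\R^\infty)$.

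With (a), (b), (c) assembled, Proposition \ref{Step1} delivers $\textbf{F}^{(l)}(n)\stackrel{d}{\rightarrow}\textbf{F}^{(l)}$ on $C(\R^I;\R^\infty)$, which is the claim. The main obstacle, such as it is, is conceptual rather than technical: one must be careful that the $n$-uniformity of the per-component Kolmogorov--Chentsov constant $H^{(l)}$ (established in Lemma \ref{Lemma3}) and its independence of the unit index $i$ (noted in Lemma \ref{Lemma2}) together survive the $\ell^1$-weighted aggregation defining the $\R^\infty$ metric $d_\infty$ — which they do precisely because $\sum_i 2^{-i}$ is finite and $H^{(l)}$ does not depend on $i$. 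A secondary point requiring a line of justification is the interchange of expectation and the infinite sum, handled by monotone convergence applied to the partial sums $g(N)=\sum_{i=1}^N 2^{-i}|f_i^{(l)}(x,n)-f_i^{(l)}(y,n)|^\alpha$, exactly as in Lemma \ref{Lemma5}. Everything else is a direct transcription of the arguments already carried out for the single-unit process.
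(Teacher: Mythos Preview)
Your proof is correct, but you take a different route to uniform tightness than the paper does. Both proofs assemble the same ingredients (a)--(b) from Lemma~\ref{Lemma4} and Lemma~\ref{Lemma5} and then isolate uniform tightness of $(\textbf{F}^{(l)}(n))_{n\geq1}$ in $C(\R^I;\R^\infty)$ as the only remaining task. For that task you re-run the Kolmogorov--Chentsov tightness criterion (Proposition~\ref{Kolmogorov_Chentsov}) directly in the Polish space $\R^\infty$, aggregating the per-component moment bounds from Lemma~\ref{Lemma3} through the weighted sum defining $d_\infty$; this mirrors the continuity computation of Lemma~\ref{Lemma5} verbatim, only with the finite-$n$ bound in place of the limiting one. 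The paper instead bypasses Proposition~\ref{Kolmogorov_Chentsov} altogether: it uses the componentwise uniform tightness already furnished by Lemma~\ref{Lemma3} to pick compacts $K_i\subset C(\R^I;\R)$ with $\mathbb{P}[f_i^{(l)}(n)\notin K_i]<\epsilon_i$, forms $K=\times_{i}K_i$, appeals to Tychonoff for compactness in the product topology, invokes an auxiliary topological comparison (SM~E.4) to transfer compactness to $C(\R^I;\R^\infty)$, and finishes with a union bound $\sum_i\epsilon_i<\epsilon$. Your approach is arguably cleaner in that it stays entirely within the Kolmogorov--Chentsov framework and avoids the topological lemma relating $\times_i C(\R^I;\R)$ to $C(\R^I;\R^\infty)$; the paper's approach is more structural, showing that tightness in the product is inherited from tightness of the marginals without any further moment estimate.
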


\begin{proof}
This is Proposition \ref{Step1} applied to $(\textbf{F}^{(l)}(n))_{n\geq1}$. From Lemma \ref{Lemma4} and Lemma \ref{Lemma5} it remains to show the uniform tightness of the sequence $(\textbf{F}^{(l)}(n))_{n\geq1}$ in $C(\R^I;\R^{\infty})$. 
Let $\epsilon > 0$ and let $(\epsilon_i)_{i\geq1}$ be a positive sequence such that $\sum_{i=1}^{\infty} \epsilon_i = \epsilon/2$ . We have established the uniform tightness of each component (Lemma \ref{Lemma3}). Therefore for each $i \in \N$ there exists a compact $K_i \subset C(\R^I;\R)$ such that $\mathbb{P}[f_i^{(l)}(n) \in C(\R^I;\R) \setminus K_i]< \epsilon_i$ for each $n \in \N$ (such compact depends on $\epsilon_i$). Set $K= \times_{i=1}^{\infty}K_i$ which is compact by Tychonoff theorem. Note that this is a compact on the product space $\times_{i=1}^{\infty}C(\R^I;\R)$ with associated product topology, and this is also a compact on $C(\R^I;\R^{\infty})$ (see \ztitleref{sec:appE}.4). Then $\mathbb{P} \Big[ \textbf{F}^{(l)}(n) \in C(\R^I;\R^{\infty}) \setminus K \Big] = \mathbb{P} \Big[ \bigcup_{i=1}^{\infty}\{ f^{(l)}_i(n) \in C(\R^I;\R) \setminus K_i \} \Big] \leq \sum_{i=1}^{\infty} \mathbb{P} \Big[ f^{(l)}_i(n) \in C(\R^I;\R) \setminus K_i \Big] \leq \sum_{i=1}^{\infty} \epsilon_i < \epsilon
$ which concludes the proof.
\end{proof}

\section{Discussion} \label{sec:discussion}

We looked at deep Gaussian neural networks as stochastic processes, i.e. infinite-dimensional random elements, on the input space $\mathbb{R}^{I}$, and we showed that: i) a network defines a stochastic process on the input space $\mathbb{R}^{I}$; ii) under suitable assumptions on the activation function, a network with re-scaled weights converges weakly to a Gaussian Process in the large-width limit. These results extend previous works \citep{neal1995bayesian,der2006beyond,lee2018deep,matthews2018gaussian,matthews2018gaussianB,yang2019wide} that investigate the limiting distribution of neural network over a countable number of distinct inputs. From the point of view of applications, the convergence in distribution is the starting point for the convergence of expectations. Let consider a continuous function $g: C(\R^I;\R^\infty) \rightarrow \R$. By the continuous function mapping theorem \citep[Theorem 2.7]{billingsley1999convergence}, we have $g(\textbf{F}^{(l)}(n)) \overset{d}{\rightarrow} g(\textbf{F}^{(l)})$ as $n\rightarrow+\infty$, and under uniform integrability \citep[Section 3]{billingsley1999convergence}, we have \citep[Theorem 3.5]{billingsley1999convergence} $\E[g(\textbf{F}^{(l)}(n))] \rightarrow \E[g(\textbf{F}^{(l)})]$ as $n\rightarrow+\infty$. See also \cite{dudley2002real} and references therein.

As a by-product of our results we showed that, under a Lipschitz activation function, the limiting Gaussian Process has almost surely locally $\gamma$-H\"older continuous paths, for $0 < \gamma <1$. This raises the question on whether it is possible to strengthen our results to cover the case $\gamma =1$, or even the case of local Lipschitzianity of the paths of the limiting process. In addition, if the activation function is differentiable, does this property transfer to the limiting process? We leave these questions to future research. Finally, while fully-connected deep neural networks represent an ideal starting point for theoretical analysis, modern neural network architectures are composed of a much richer class of layers which includes convolutional, residual, recurrent and attention components. The technical arguments followed in this paper are amenable to extensions to more complex network architectures. Providing a mathematical formulation of network's architectures and convergence results in a way that it allows for extensions to arbitrary architectures, instead of providing an ad-hoc proof for each specific case, is a fundamental research problem. Greg Yang’s work on Tensor Programs \citep{yang2019wide} constitutes an important step in this direction.

\bibliographystyle{iclr2021_conference}
\bibliography{ref.bib}

\clearpage

\appendix

\section*{SM A} \zlabel{sec:appA}
The following inequalities will be used different times in the proofs without explicit mention:
\begin{itemize}
\item[1)]
For any real values $a_1, \dots, a_n \geq 0$ and $s \geq 1$,
\begin{align*}
(a_1 + \dots + a_n)^s \leq n^{s-1}(a_1^s+ \dots + a_n^s).
\end{align*}
It follows immediately considering the convex function $x \mapsto x^s$ applied to the the weighted sum $\nicefrac{a_1 + \dots + a_n}{n}$.

\item[2)]For every values $a_1, \dots, a_n \in \R$ and $0 < s < 1$,
\begin{align*}
|a_1 + \dots + a_n|^s \leq |a_1|^s+ \dots + |a_n|^s.
\end{align*}

It follows immediately studying the $s$-H\"older function $x \mapsto |x|^s$.
\end{itemize} 

By means of (\ref{caromega}), (\ref{carbias}) and (\ref{f1}), we can write for $i\geq 1$ and $l \geq 2$
\[
\begin{split}
\varphi_{f^{(1)}_i({\textbf{X},n})}(\textbf{t})&=\E [e^{\ii \textbf{t}^Tf^{(1)}_i({\textbf{X},n})}]\\
&=\E \Big[ \exp \Big\{ \ii \textbf{t}^T \Big[ \sum_{j=1}^{I}\omega_{i,j}^{(1)}\textbf{x}_{j}+ b_{i}^{(1)}\textbf{1}\Big] \Big\}\Big]\\
&=\E \Big[ \exp \Big\{ \ii \textbf{t}^T b_{i}^{(1)}\textbf{1} +\ii \textbf{t}^T \sum_{j=1}^{I}\omega_{i,j}^{(1)}\textbf{x}_{j} \Big\}\Big]\\
&=\E \Big[ \exp \Big\{ \ii (\textbf{t}^T \textbf{1}) b_{i}^{(1)} \Big\}\Big]\prod_{j=1}^{I} \E \Big[ \exp \Big\{ \ii (\textbf{t}^T \textbf{x}_{j})\omega_{i,j}^{(1)} \Big\}\Big]\\
&= \exp \Big\{-\frac{1}{2} \sigma_{b}^2(\textbf{t}^T \textbf{1})^2 \Big\} \prod_{j=1}^{I} \exp \Big\{ -\frac{1}{2} \sigma_{\omega}^2 (\textbf{t}^T \textbf{x}_{j})^2 \Big\}\\
&= \exp \Big\{-\frac{1}{2} \Big[ \sigma_{b}^2(\textbf{t}^T \textbf{1})^2+ \sigma_{\omega}^2 \sum_{j=1}^{I} (\textbf{t}^T \textbf{x}_{j})^2 \Big]\Big\}\\
&=\exp \Big\{-\frac{1}{2} \textbf{t}^T \Sigma(1) \textbf{t}\Big\},
\end{split}
\]
i.e. 
\[
f^{(1)}_{i}(\textbf{X}) \stackrel{d}{=}N_k(\textbf{0},\Sigma(1)),
\]
with $k \times k$ covariance matrix with element in the $i$-th row and $j$-th column as follows
\[
\Sigma(1)_{i,j}=\sigma_{b}^2+\sigma_{\omega}^2 \langle x^{(i)}, x^{(j)} \rangle_{\R^I}.
\]
Observe that we can also determine the marginal distributions,
\begin{align}
\label{f1k2}
f^{(1)}_{r,i}({\textbf{X}}) \sim N(0, \Sigma(1)_{r,r}),
\end{align}
where
\[
\Sigma(1)_{r,r}=\sigma_{b}^2+\sigma_{\omega}^2 \| x^{(r)} \|^2_{\R^I}.
\]
Now, for $i\geq 1$ and $l \geq 2$, by means of (\ref{caromega}), (\ref{carbias}) and (\ref{f2}) we can write
\[
\begin{split}
\varphi_{f^{(l)}_i({\textbf{X},n}) | f^{(l-1)}_{1,\dots ,n}}(\textbf{t})& = \E [e^{\ii \textbf{t}^Tf^{(l)}_i({\textbf{X},n})}| f^{(l-1)}_{1,\dots ,n}]\\
&=\E \Big[ \exp \Big\{ \ii \textbf{t}^T \Big[ \frac{1}{\sqrt{n}}\sum_{j=1}^{n}\omega_{i,j}^{(l)}( \phi \bullet f^{(l-1)}_j({\textbf{X},n}))+ b_{i}^{(1)}\textbf{1}\Big] \Big\} | f^{(l-1)}_{1,\dots ,n} \Big]\\
&=\E \Big[ \exp \Big\{ \ii \textbf{t}^T b_{i}^{(1)}\textbf{1} +\ii \textbf{t}^T \frac{1}{\sqrt{n}}\sum_{j=1}^{n}\omega_{i,j}^{(l)}( \phi \bullet f^{(l-1)}_j({\textbf{X},n})) \Big\}| f^{(l-1)}_{1,\dots ,n}\Big]\\
&=\E \Big[ \exp \Big\{ \ii (\textbf{t}^T \textbf{1}) b_{i}^{(1)} \Big\}\Big]\prod_{j=1}^{n} \E \Big[ \exp \Big\{ \ii \omega_{i,j}^{(l)} \Big( \frac{1}{\sqrt{n}}\textbf{t}^T( \phi \bullet f^{(l-1)}_j({\textbf{X},n})) \Big) \Big\}| f^{(l-1)}_{1,\dots ,n}\Big]\\
&= \exp \Big\{-\frac{1}{2} \sigma_{b}^2(\textbf{t}^T \textbf{1})^2 \Big\} \prod_{j=1}^{n} \exp \Big\{ -\frac{1}{2n} \sigma_{\omega}^2 \Big( \textbf{t}^T ( \phi \bullet f^{(l-1)}_j({\textbf{X},n}) )\Big)^2 \Big\}\\
&= \exp \Big\{-\frac{1}{2} \Big[ \sigma_{b}^2(\textbf{t}^T \textbf{1})^2+ \frac{\sigma_{\omega}^2}{n} \sum_{j=1}^{n}  \Big( \textbf{t}^T ( \phi \bullet f^{(l-1)}_j({\textbf{X},n}) )\Big)^2 \Big]\Big\}\\
&=\exp \Big\{-\frac{1}{2} \textbf{t}^T \Sigma(l,n) \textbf{t}\Big\},
\end{split}
\]
i.e. 
\[
f^{(l)}_i({\textbf{X},n}) | f^{(l-1)}_{1,\dots ,n} \stackrel{d}{=}N_k(\textbf{0},\Sigma(l,n)),
\]
with $k \times k$ covariance matrix with element in the $i$-th row and $j$-th column as follows
\[
\Sigma(l,n)_{i,j}=\sigma_{b}^2+\frac{\sigma_{\omega}^2}{n} \Big\langle  ( \phi \bullet \textbf{F}^{(l-1)}_{i}({\textbf{X},n}) ),  ( \phi \bullet \textbf{F}^{(l-1)}_{j}({\textbf{X},n}) ) \Big\rangle_{\R^n}.
\]
Observe that we can also determine the marginal distributions,
\begin{align}
\label{flk2}
f^{(l)}_{r,i}({\textbf{X}},n) | f^{(l-1)}_{1,\dots ,n} \sim N(0, \Sigma(l,n)_{r,r}),
\end{align}
where
\[
\Sigma(l,n)_{r,r}=\sigma_b^2+\frac{\sigma_{\omega}^2}{n} \| \phi \bullet \textbf{F}^{(l-1)}_{r} (\textbf{X},n)\|_{\R^n}^2.
\]
\subsection*{SM A.1: asymptotics for the $i-th$ coordinate}

First of all, from Definition \ref{def:1}, note that since $f^{(1)}_{i}(\textbf{X})$ does not depend on $n$ we consider the limit as $n \rightarrow \infty$ only for $f^{(l)}_i({\textbf{X},n})$ for all $l \geq 2$. It comes directly from Equation \eqref{f2} that, for every fixed $l$ and $n$ the sequence $\big( f^{(l)}_{i}(\textbf{X},n) \big)_{i\geq1}$ is exchangeable. In particular, let  $p^{(l)}_n$ denote the de Finetti (random) probability measure of the exchangeable sequence $\big( f^{(l)}_{i}(\textbf{X},n) \big)_{i\geq1}$. That is, by the celebrated de Finetti representation theorem, conditionally to $p^{(l)}_n$ the $f^{(l)}_{i}(\textbf{X},n)$'s are iid as $p^{(l)}_n$. Now, let consider the induction hypothesis that, $p_n^{(l-1)} \stackrel{d}{\rightarrow} q^{(l-1)}$ as $n \rightarrow +\infty$, where $q^{(l-1)}=N_k(\textbf{0},\Sigma(l-1))$. To establish the convergence in distribution we rely on Theorem 5.3 of \cite{kallenberg2006foundations} known as Levy theorem, taking into account the point-wise convergence of the characteristic functions. Therefore we can write the following expression:
\[
\begin{split}
\varphi_{f^{(l)}_i({\textbf{X},n})} (\textbf{t}) & = \E [e^{\ii \textbf{t}^Tf^{(l)}_i({\textbf{X},n})}]\\
&= \E [ \E [e^{\ii \textbf{t}^Tf^{(l)}_i({\textbf{X},n})}| f^{(l-1)}_{1,\dots ,n}] ]\\
&=\E \Big[ \exp \Big\{-\frac{1}{2} \textbf{t}^T \Sigma(l,n) \textbf{t}\Big\} \Big] \\
&=\E \Big[  \exp \Big\{-\frac{1}{2} \Big[ \sigma_{b}^2(\textbf{t}^T \textbf{1})^2+ \frac{\sigma_{\omega}^2}{n} \sum_{j=1}^{n}  \Big( \textbf{t}^T ( \phi \bullet f^{(l-1)}_j({\textbf{X},n}) )\Big)^2 \Big]\Big\} \Big]\\
&= e^{-\frac{1}{2} \sigma_{b}^2(\textbf{t}^T \textbf{1})^2  } \E\Big[ \exp \Big\{ -\frac{\sigma_{\omega}^2}{2n} \sum_{j=1}^{n}  \Big( \textbf{t}^T ( \phi \bullet f^{(l-1)}_j({\textbf{X},n}) )\Big)^2  \Big\} \Big]\\
&= e^{-\frac{1}{2}  \sigma_{b}^2(\textbf{t}^T \textbf{1})^2  } \E \Big[ \E\Big[ \exp \Big\{ -\frac{\sigma_{\omega}^2}{2n} \sum_{j=1}^{n}  \Big( \textbf{t}^T ( \phi \bullet f^{(l-1)}_j({\textbf{X},n}) )\Big)^2  \Big\} | p^{(l-1)}_n\Big] \Big]\\
&= e^{-\frac{1}{2}  \sigma_{b}^2(\textbf{t}^T \textbf{1})^2  } \E \Big[ \prod_{j=1}^{n}\E\Big[ \exp \Big\{ -\frac{\sigma_{\omega}^2}{2n} \Big( \textbf{t}^T ( \phi \bullet f^{(l-1)}_j({\textbf{X},n}) )\Big)^2  \Big\} | p^{(l-1)}_n \Big] \Big]\\
&= e^{-\frac{1}{2} \sigma_{b}^2(\textbf{t}^T \textbf{1})^2  } \E \Big[ \prod_{j=1}^{n} \int \exp \Big\{ -\frac{\sigma_{\omega}^2}{2n} \Big( \textbf{t}^T ( \phi \bullet f )\Big)^2  \Big\} p^{(l-1)}_n (\dd f) \Big]\\
&= e^{-\frac{1}{2} \sigma_{b}^2(\textbf{t}^T \textbf{1})^2  } \E \Big[ \Big( \int \exp \Big\{ -\frac{\sigma_{\omega}^2}{2n} \Big( \textbf{t}^T ( \phi \bullet f )\Big)^2  \Big\} p^{(l-1)}_n (\dd f) \Big)^n \Big].\\
\end{split}
\]
Observe that the last integral is with respect to $k$ coordinates: i.e. $\dd f =(\dd f_1, \dots ,\dd f_k)$. Denote as $\stackrel{p}{\rightarrow}$ the convergence in probability. We will prove the following lemmas:
\begin{enumerate}
	\item[L1)] for each $l \geq 2$ and $s \geq 1$, $\mathbb{P}[p^{(l-1)}_n \in Y_s]=1$, where $Y_s=\{p:\int \| \phi \bullet f \|_{\R^k}^{s} p(\dd f)<+\infty\}$;
	\item[L2)] $\int (\textbf{t}^T (\phi \bullet f))^2 p^{(l-1)}_{n} ( \dd f) \stackrel{p}{\rightarrow} \int (\textbf{t}^T (\phi \bullet f))^2  q^{(l-1)} ( \dd f)$, as $n \rightarrow +\infty$;
	\item[L3)] $\int ( \textbf{t}^T ( \phi \bullet f ))^2 \big[ 1-\exp \big\{ - \theta \frac{\sigma_{\omega}^2}{2n} ( \textbf{t}^T ( \phi \bullet f ))^2  \big\} \big] p^{(l-1)}_n (\dd f)\stackrel{p}{\rightarrow}0$, as $n \rightarrow +\infty$ for every $\theta \in (0,1)$.
\end{enumerate}

\subsubsection*{SM A.1.1: proof of L1}
In order to prove the three lemmas, we will use many times the envelope condition (\ref{envelope}) without explicit mention. For $l=2$ we have
\[
\begin{split}
\E[ \| \phi \bullet f^{(1)}_i(\textbf{X})\|_{\R^k}^{s}] & \leq \E\Big[ \Big( \sum_{r=1}^{k} |\phi \circ f^{(1)}_{r,i}(\textbf{X})|^2 \Big)^{s/2} \Big] \\
& \leq  \E \Big[ \Big( \sum_{r=1}^{k} |\phi \circ f^{(1)}_{r,i}(\textbf{X})| \Big)^{s} \Big] \\
& \leq \E \Big[ k^{s-1} \sum_{r=1}^{k} |\phi \circ f^{(1)}_{r,i}(\textbf{X})|^{s} \Big] \\
& = k^{s-1} \sum_{r=1}^k \E \Big[ |\phi \circ f^{(1)}_{r,i}(\textbf{X})|^{s} \Big] \\
& \leq k^{s-1} \sum_{r=1}^k \E \Big[ (a+b |f^{(1)}_{r,i}(\textbf{X})|^{m})^{s} \Big] \\
& \leq (2k)^{s-1} \sum_{r=1}^k \Big( a^{s} +b^{s}\E [|f^{(1)}_{r,i}(\textbf{X})|^{sm}] \Big) \\
&< +\infty,
\end{split}
\]
where we used that from (\ref{f1k2}), $f^{(1)}_{r,i}(\textbf{X}) \sim N(0,\sigma_{b}^2+\sigma_{\omega}^2 \| x^{(r)} \|^2_{\R^I})$ and then
\[
\E [|f^{(1)}_{r,i}(\textbf{X})|^{sm}]=M_{sm}(\sigma_{b}^2+\sigma_{\omega}^2 \| x^{(r)} \|^2_{\R^I})^{\nicefrac{sm}{2}},
\]
where $M_c$ is the $c$-th moment of $|N(0,1)|$. Now assume that L1 is true for $(l-2)$, i.e. for each $s \geq 1$ it holds $\int \| \phi \bullet f \|_{\R^k}^{s} p^{(l-2)}_n(\dd f)<+\infty$ uniformly in $n$, and we prove that it is true also for $(l-1)$.
\[
\begin{split}
\E[ \| \phi \bullet f_{i}^{(l-1)}(\textbf{X},n) \|_{\R^k}^s | f_{1, \dots , n}^{(l-2)} ] &\leq \E \Big[ k^{s-1} \sum_{r=1}^{k} |\phi \circ f_{r,i}^{(l-1)}(\textbf{X},n) |^s | f_{1, \dots , n}^{(l-2)} \Big]\\
& \leq (2k)^{s-1} \sum_{r=1}^{k} \Big( a^s +b^s\E \Big[  |f_{r,i}^{(l-1)}(\textbf{X},n) |^{ms} | f_{1, \dots , n}^{(l-2)} \Big] \Big)\\
& \leq D_1(a,k.s)+D_2(b,k,s) \sum_{r=1}^{k} \E \Big[  |f_{r,i}^{(l-1)}(\textbf{X},n) |^{ms} | f_{1, \dots , n}^{(l-2)} \Big]. \\
\end{split}
\]
From (\ref{flk2}) we get
\[
\begin{split}
\E \Big[  |f_{r,i}^{(l-1)}(\textbf{X},n) |^{ms} | f_{1, \dots , n}^{(l-2)} \Big] &= M_{ms} \Big( \sigma_b^2+\frac{\sigma_{\omega}^2}{n} \| \phi \bullet \textbf{F}^{(l-2)}_{r} (\textbf{X},n)\|_{\R^n}^2\Big)^{\nicefrac{sm}{2}}\\
&\leq M_{ms}2^{sm-1} \Big( \sigma_b^{2sm}+\frac{\sigma_{\omega}^{2sm}}{n^{sm}} \| \phi \bullet \textbf{F}^{(l-2)}_{r} (\textbf{X},n)\|_{\R^n}^{2sm}\Big)^{\nicefrac{1}{2}}.\\
\end{split}
\]
Thus we have
\[
\begin{split}
&\E[ \| \phi \bullet f_{i}^{(l-1)}(\textbf{X},n) \|_{\R^k}^s | p _n^{(l-2)} ] \\
& \leq D_1(a,k,s)+D_3(b,k,s,m) \sum_{r=1}^{k} \Big( \sigma_b^{2sm}+\frac{\sigma_{\omega}^{2sm}}{n^{sm}} \E\Big[ \| \phi \bullet \textbf{F}^{(l-2)}_{r} (\textbf{X},n)\|_{\R^n}^{2sm}| p^{(l-2)}_n \Big] \Big)^{\nicefrac{1}{2}},\\
\end{split}
\]
where
\[
\begin{split}
\E\Big[ \| \phi \bullet \textbf{F}^{(l-2)}_{r} (\textbf{X},n)\|_{\R^n}^{2sm}| p^{(l-2)}_n \Big]
& \leq \E \Big[ n^{sm-1} \sum_{i=1}^{n} |\phi \circ f_{r,i}^{(l-2)}(\textbf{X},n) |^{2sm} | p_n^{(l-2)} \Big]\\
& \leq D_4(s,m) n^{sm} \int |\phi (f_r)|^{2sm}  p_n^{(l-2)} (\dd f_r)\\
& \leq D_4(s,m) n^{sm} \int \|\phi \bullet f\|_{\R^k}^{2sm}  p_n^{(l-2)} (\dd f),\\
\end{split}
\]
where the last inequality is due to the fact that $ |\phi (f_r)|^{2sm} \leq \Big( \sum_{r=1}^k  | \phi(f_r) |^2 \Big)^{sm} $ and then
\[
\int |\phi (f_r)|^{2sm} p_n^{(l-2)}(\dd f_r) \leq \int \Big( \sum_{r=1}^k  | \phi(f_r) |^2 \Big)^{sm} p_n^{(l-2)}(\dd f_1 , \dots , \dd f_k)= \int \|\phi \bullet f\|_{\R^k}^{2sm}  p_n^{(l-2)} (\dd f).
\]
So, we proved that
\begin{align}
\label{Lbounded}
\begin{split}
&\E[ \| \phi \bullet f_{i}^{(l-1)}(\textbf{X},n) \|_{\R^k}^s | p _n^{(l-2)} ] \\
& \leq D_1(a,k,s)+D_3(b,k,s,m) \sum_{r=1}^{k} \Big( \sigma_b^{2sm}+\sigma_{\omega}^{2sm}D_4(s,m) \int\|\phi \bullet f\|_{\R^k}^{2sm}  p_n^{(l-2)} (\dd f) \Big)^{\nicefrac{1}{2}},\\
\end{split}
\end{align}
which is finite by induction hypothesis uniformly in $n$. To conclude, since $p_n^{(l-1)}\overset{iid}{\sim} f_i^{(l-1)}(\textbf{X},n)|p_n^{(l-1)}$ we get

\begin{align}
\label{Lbounded_2}
\begin{split}
\int \| \phi \bullet f\|_{\R^k}^s p _n^{(l-1)} (\dd f) &= \E[\|  \phi \bullet f_{i}^{(l-1)}(\textbf{X},n) \|_{\R^k}^s|p_n^{(l-1)}]\\
&=\E[ \E[\|\phi \bullet f_{i}^{(l-1)}(\textbf{X},n) \|_{\R^k}^s|p_n^{(l-2)}]|p_n^{(l-1)}]\\
&\leq cost(a,k,s,m) < \infty
\end{split}
\end{align}
which is bounded uniformly in $n$ since the inner expectation is bounded uniformly in $n$ by (\ref{Lbounded}).

\textbf{Remark}: $Y_s$ is a measurable set with respect to the weak topology for each $s\geq 1$, indeed for each $R \in \N$ defining the map
	\[ 
	T_R:U \rightarrow \R, \quad T_R(p)=\int_{B_R(0)} \| \phi \bullet f \|_{\R^k}^{s} p(\dd f) = \int_{\R^k} \| \phi \bullet f \|_{\R^k}^{s} \mathcal{X}_{(B_R(0))}(f) p(\dd f)
	\]
	where $U:=\{p: p $ distribution of a r.v. $X:\Omega \rightarrow \R^k \}$ endowed with the weak topology, since $\cap_{R \in \N}T_R^{-1}(0,\infty)=Y_s$ and $(0,\infty)$ is open, it is sufficient to prove that $T_R$ is continuous. Let $(p_m)\subset U$ such that $p_m$ converges to $p$ with respect to the weak topology, then by Definition \ref{def:3}
	\[
	|T_R(p_m)-T_R(p)|=\Big| \int \| \phi \bullet f \|_{\R^k}^{s} \mathcal{X}_{(B_R(0))}(f) p_m(\dd f) - \int \| \phi \bullet f \|_{\R^k}^{s} \mathcal{X}_{(B_R(0))}(f) p(\dd f) \Big| \rightarrow 0
	\]
	because the function $f \mapsto \|\phi \bullet f \|_{\R^k}^{s} \mathcal{X}_{(B_R(0))}(f)$ is continuous (by composition of the continuous functions $\phi$ and $\|\|^s$) and bounded by Weierstrass theorem.

\subsubsection*{SM A.1.2: proof of L2}
By induction hypothesis, $p^{(l-1)}_n$ converges weakly to a $p^{(l-1)}$ with respect to the weak topology and the limit is degenerate, in the sense that it provides a.s. the distribution $q^{(l-1)}$. Then $p^{(l-1)}_n$ converges in probability to  $p^{(l-1)}$. Then for every sub sequence $n'$ there exists a further sub sequence $n''$ such that $p^{(l-1)}_{n''}$ converges a.s. to $p^{(l-1)}$. By induction hypothesis, $p^{(l-1)}$ is absolutely continuous with respect to the Lebesgue measure. Since $\phi$ is a.s. continuous and the sequence $\big( (\textbf{t}^T (\phi \bullet f))^2 \big)_{n \geq 1}$ uniformly integrable with respect to $p^{(l-1)}_n$ (by Cauchy-Schwarz inequality and L1 $\int \big( \textbf{t}^T ( \phi \bullet f )\big)^{2s} p^{(l-1)}_n (\dd f) \leq  \|\textbf{t}\|^{2s}_{\R^k} \int \| \phi \bullet f \|^{2s}_{\R^k} p^{(l-1)}_n (\dd f) < \infty$, thus is $L^s$-bounded for each $s\geq1$, and so uniformly integrable, then we can write the following 
\[
\int (\textbf{t}^T (\phi \bullet f))^2 p^{(l-1)}_{n''} (\dd f) \stackrel{a.s.}{\rightarrow} \int (\textbf{t}^T (\phi \bullet f))^2  q^{(l-1)} (\dd f).
\]
Thus, as $n \to +\infty$
\[
\int (\textbf{t}^T (\phi \bullet f))^2 p^{(l-1)}_{n} (\dd f) \stackrel{p}{\rightarrow} \int (\textbf{t}^T (\phi \bullet f))^2  q^{(l-1)} (\dd f).
\]

\subsubsection*{SM A.1.3: proof of L3}
Let $p \geq 1$ and $q \geq 1$ such that $\frac{1}{p}+\frac{1}{q}=1$. By means of H\"older inequality
\[
\begin{split}
&\int \| \phi \bullet f \|_{\R^k}^2(1-e^{-\frac{\sigma_{\omega}^2}{2n} (\textbf{t}^T (\phi \bullet f ))^2})p^{(l-1)}_n(\dd f) \\
&\leq \Big( \int \| \phi \bullet f \|_{\R^k}^{2p} p^{(l-1)}_n(\dd f) \Big)^{\nicefrac{1}{p}} \Big( \int (1-e^{-\frac{\sigma_{\omega}^2}{2n} (\textbf{t}^T (\phi \bullet f ))^2})^q p^{(l-1)}_n(\dd f) \Big)^{\nicefrac{1}{q}}.
\end{split}
\]
Since $q\geq1$, for every $y\geq0$ we have $0\leq1-e^{-y}<1$, then $(1-e^{-y})^q \leq (1-e^{-y}) \leq y$. It implies the following
\[
\begin{split}
&\int \| \phi \bullet f \|_{\R^k}^2(1-e^{-\frac{\sigma_{\omega}^2}{2n} (\textbf{t}^T (\phi \bullet f ))^2})p^{(l-1)}_n(\dd f) \\
& \leq \Big( \int \| \phi \bullet f \|_{\R^k}^{2p} p^{(l-1)}_n(\dd f) \Big)^{\nicefrac{1}{p}} \Big( \int  \frac{\sigma_{\omega}^2}{2n} (\textbf{t}^T (\phi \bullet f ))^2p^{(l-1)}_n(\dd f) \Big)^{\nicefrac{1}{q}}\\
& \leq \Big( \int \| \phi \bullet f  \|_{\R^k}^{2p} p^{(l-1)}_n(\dd f) \Big)^{\nicefrac{1}{p}} \Big( \| \textbf{t} \|_{\R^k}^2 \frac{\sigma_{\omega}^2}{2n} \int \| \phi \bullet f \|_{\R^k}^2 p^{(l-1)}_n(\dd f) \Big)^{\nicefrac{1}{q}} \rightarrow 0,
\end{split}
\]
as $n \rightarrow +\infty$ since by L1 the two integrals are bounded uniformly in n. Thus for every $y>0$ and $\theta \in (0,1)$  $e^{-\theta y} \geq e^{-y} \Rightarrow 0\leq 1-e^{-\theta y} \leq 1-e^{- y} \leq 1$ we get
\[
\begin{split}
0 &\leq \int \big( \textbf{t}^T ( \phi \bullet f )\big)^2 \Big[ 1-\exp \big\{ -\theta \frac{\sigma_{\omega}^2}{2n} \big( \textbf{t}^T ( \phi \bullet f )\big)^2  \big\} \Big] p^{(l-1)}_n (\dd f) \\
& \leq  \| \textbf{t} \|_{\R^k}^2 \int \| \phi \bullet f \|_{\R^k}^2 \Big[ 1-\exp \big\{ - \frac{\sigma_{\omega}^2}{2n} \big( \textbf{t}^T ( \phi \bullet f )\big)^2  \big\} \Big] p^{(l-1)}_n (\dd f)\rightarrow 0,
\end{split}
\]
as $n \rightarrow +\infty$.
\subsubsection*{SM A.1.4: combination of the lemmas}
We conclude in two steps.

\textbf{Step 1: uniform integrability.}
Define $y=y_n(f)=\frac{\sigma_{\omega}^2}{2n} ( \textbf{t}^T ( \phi \bullet f ))^2 $. Thus 
\[
\begin{split}
\varphi_{f^{(l)}_i({\textbf{X},n})} (\textbf{t}) &= e^{-\frac{1}{2} \sigma_{b}^2(\textbf{t}^T \textbf{1})^2} \E \Big[ \Big( \int e^{-y_n(f)} p^{(l-1)}_n (\dd f) \Big)^n \Big] \\
&= e^{-\frac{1}{2} \sigma_{b}^2(\textbf{t}^T \textbf{1})^2  } \E [ A_n ] 
\end{split}
\]

where $A_n=\Big( \int e^{-y_n(f)} p^{(l-1)}_n (\dd f) \Big)^n $. $(A_n)_{n \geq 1}$ is is uniformly integrable because it is $L^s$-bounded for all $s \geq 1$. Indeed, since $0<e^{-y_n(f)}\leq 1$

\[
\E[A_n^s] \leq \E \Big[ \Big( \int p_n^{(l-1)}(\dd f) \Big)^{ns}\Big] = \E \big[ 1 \big] = 1
\]

\textbf{Step 2: convergence in probability.}
By Lagrange theorem for $y >0$ there exists $\theta \in (0,1)$ such that $e^{-y}=1-y+y(1-e^{-y\theta})$. Then for every $n$ there exists a real value $\theta_n \in (0,1)$ such that the follow equality holds:
\[
\begin{split}
A_n=\Big(1- \frac{\sigma_{\omega}^2}{2n} [A'_n - A''_n ]\Big)^n.
\end{split}
\]
where

\[
\begin{sistema}
A'_n= \int ( \textbf{t}^T ( \phi \bullet f ))^2 p^{(l-1)}_n (\dd f) \\
A''_n= \int ( \textbf{t}^T ( \phi \bullet f ))^2 \Big[ 1-\exp \Big\{ -\theta_n \frac{\sigma_{\omega}^2}{2n} ( \textbf{t}^T ( \phi \bullet f ))^2  \Big\} \Big] p^{(l-1)}_n (\dd f)
\end{sistema}
\]

Using the definition of the exponential function, i.e. $e^x= \lim _{n \to \infty} (1+\frac{x}{n})^n$, L2 and L3 we get that

\[
A_n \overset{p}{\rightarrow} \exp \Big\{- \frac{\sigma_{\omega}^2}{2} \int ( \textbf{t}^T ( \phi \bullet f ))^2 q^{(l-1)}(\dd f)\Big\}, \quad \text{ as } n \rightarrow \infty
\]

\textbf{Conclusion}: since convergence in probability with uniform integrability implies convergence in mean, by the two above steps we get
\[
\begin{split} 
\varphi_{f^{(l)}_i({\textbf{X},n})} (\textbf{t}) = e^{-\frac{1}{2} \sigma_{b}^2(\textbf{t}^T \textbf{1})^2  } \E [ A_n ] &\rightarrow \exp \Big\{ -\frac{\sigma_{b}^2}{2}(\textbf{t}^T \textbf{1})^2 - \frac{\sigma_{\omega}^2}{2} \int ( \textbf{t}^T ( \phi \bullet f ))^2 q^{(l-1)}(\dd f) \Big\}\\
&= \exp \Big\{ -\frac{1}{2} \Big[ \sigma_{b}^2(\textbf{t}^T \textbf{1})^2 +\sigma_{\omega}^2 \int ( \textbf{t}^T ( \phi \bullet f ))^2 q^{(l-1)}(\dd f) \Big] \Big\}\\
&= \exp \Big\{ -\frac{1}{2} \textbf{t}^T \Sigma(l) \textbf{t}\Big\},
\end{split}
\]
where $\Sigma(l)$ is a $k \times k$ matrix with elements
\[
\Sigma(l)_{i,j}= \sigma_b^2+\sigma_{\omega}^2 \int\phi( f_i) \phi (f_j) q^{(l-1)}(\dd f),
\]
where $q^{(l-1)}=N_k(\textbf{0},\Sigma(l-1))$. Then the limit distribution of $f^{(l)}_i({\textbf{X}})$ is a $k$-dimensional Gaussian distribution with mean $\textbf{0}$ and covariance matrix $\Sigma(l)$, i.e. as $n \rightarrow +\infty$,
\[
f^{(l)}_i(\textbf{X},n) \stackrel{d}{\rightarrow} N_k(\textbf{0},\Sigma(l)).
\]

\section*{SM B.1}\zlabel{sec:appB1}
Fix $i \geq 1$, $l\geq1$, $n \in \N$ . We prove that there exists a random variable $H^{(l)}_i(n)$ such that 
\[
|f^{(l)}_i(x,n)-f^{(l)}_i(y,n)|  \leq H^{(l)}_i(n)\| x-y \|_{\R^I}, \quad x,y \in \R^I,\mathbb{P}-a.s.
\]
i.e. fixed $\xi \in \Omega$ the function $x \mapsto f^{(l)}_i(x,n)(\xi)$ is Lipschitz. We proceed by induction on the layers. Fix $x,y \in \R^I$. For the first layer, by (\ref{f1}) we get
\[
\begin{split}
|f^{(1)}_i(x,n)(\xi)-f^{(1)}_i(y,n)(\xi)| &= \Big| \sum_{j=1}^{I}\omega_{i,j}^{(1)}(\xi)x_j + b_i^{(1)}(\xi) - \big( \sum_{j=1}^{I}\omega_{i,j}^{(1)}(\xi)y_j + b_i^{(1)}(\xi)\big) \Big| \\
& = \Big| \sum_{j=1}^{I}\omega_{i,j}^{(1)}(\xi)x_j -  \sum_{j=1}^{I}\omega_{i,j}^{(1)}(\xi)y_j  \Big| \\
& = \Big| \sum_{j=1}^{I}\omega_{i,j}^{(1)}(\xi) (x_j - y_j)  \Big| \\
& \leq \sum_{j=1}^{I}  \Big| \omega_{i,j}^{(1)}(\xi) \Big ||x_j - y_j| \\
&  \leq  \|x - y\|_{\R^I} \sum_{j=1}^{I}  \Big| \omega_{i,j}^{(1)}(\xi) \Big | \\
\end{split}
\]

where we used that $|x_j-y_j|\leq \| x-y \|_{\R^I}$. Set $H^{(1)}_i(n)= \sum_{j=1}^{I}\big|\omega_{i,j}^{(1)} \big |$. Suppose by induction hypothesis that for each $j \geq 1$ there exists a random variable $H_j^{(l-1)}(n)$ such that $|f^{(l-1)}_j(x,n)(\xi)-f^{(l-1)}_j(y,n)(\xi)| \leq  H_j^{(l-1)}(n)(\xi) \|x - y\|_{\R^I}$, and let $L_{\phi}$ be the Lipschitz constant of $\phi$. Then by (\ref{f2}) we get
\[
\begin{split}
|f^{(l)}_i(x,n)&(\xi)-f^{(l)}_i(y,n)(\xi)| \\
&= \Big| \frac{1}{\sqrt{n}}\sum_{j=1}^{n}\omega_{i,j}^{(l)}(\xi)\phi(f^{(l-1)}_j(x,n)) + b_i^{(l)}(\xi) - \Big[ \frac{1}{\sqrt{n}} \sum_{j=1}^{n}\omega_{i,j}^{(l)}(\xi)\phi(f^{(l-1)}_j(y,n)) + b_i^{(l)}(\xi)\Big] \Big| \\
&= \Big| \frac{1}{\sqrt{n}}\sum_{j=1}^{n}\omega_{i,j}^{(l)}(\xi)\phi(f^{(l-1)}_j(x,n)) - \frac{1}{\sqrt{n}} \sum_{j=1}^{n}\omega_{i,j}^{(l)}(\xi)\phi(f^{(l-1)}_j(y,n)) \Big| \\
& \leq \frac{1}{\sqrt{n}} \sum_{j=1}^{n} \Big| \omega_{i,j}^{(l)}(\xi) \Big| \big| \phi(f^{(l-1)}_j(x,n)) - \phi(f^{(l-1)}_j(y,n)) \big| \\
& \leq \frac{L_{\phi}}{\sqrt{n}} \sum_{j=1}^{n} \Big| \omega_{i,j}^{(l)}(\xi)\Big| \big| f^{(l-1)}_j(x,n) - f^{(l-1)}_j(y,n) \big| \\
& \leq \frac{L_{\phi}}{\sqrt{n}} \sum_{j=1}^{n} \Big| \omega_{i,j}^{(l)}(\xi)\Big|  H^{(l-1)}_j(n)(\xi) \|x - y\|_{\R^I} \\
& \leq \| x-y \|_{\R^I} \frac{L_{\phi}}{\sqrt{n}} \sum_{j=1}^{n} \Big| \omega_{i,j}^{(l)}(\xi)\Big|  H^{(l-1)}_j(n)(\xi) \\
\end{split}
\]
Set 
\begin{align*}\label{Lipsch_i_n}
H^{(l)}_i(n)=\frac{L_{\phi}}{\sqrt{n}} \sum_{j=1}^{n} \Big| \omega_{i,j}^{(l)}\Big|  H^{(l-1)}_j(n)
\end{align*}
Thus we proved that fixed $l \geq 1$, and $i \geq 1$, for each $n\in \N$
\[
\mathbb{P}\Big[ \Big\{ \xi \in \Omega: |f^{(l)}_i(x,n)(\xi) -f^{(l)}_i(y,n)(\xi)|  \leq H^{(l)}_i(n)(\xi)\| x-y \|_{\R^I} \Big\} \Big]=1.
\]
Thus, each process $ f^{(l)}_i(1), f^{(l)}_i(2), \dots$ is $\mathbb{P}$-a.s. Lipschitz, in particular is $\mathbb{P}$-a.s. continuous processes, i.e. belongs to $C(\R^I;\R)$. In order to prove the continuity of $f^{(l)}_i$ we can not just take the limit as $n \rightarrow +\infty$ of (\ref{Lipschitz}) because the left quantity converges to $|f_i^{(l)}(x)-f^{(l)}_i(y)|$ only in distribution and not $\mathbb{P}$-a.s., but we can prove the continuity by applying Proposition \ref{Kolmogorov_Chentsov_0}, as we will show in \ztitleref{sec:appB2}.

\section*{SM B.2}\zlabel{sec:appB2}
Fix $i \geq 1$, $l\geq1$. We show the continuity of the limiting process $f^{(l)}_i$ by applying Proposition \ref{Kolmogorov_Chentsov_0}. Take two inputs $x,y \in \R^I$. From (\ref{thm2}) we know that $[f^{(l)}_i(x),f^{(l)}_i(y)] \sim N_2(\textbf{0}, \Sigma(l))$ where
\[
 \begin{split}
& \Sigma(1)=
 \sigma_b^2
\begin{bmatrix}
 1 & 1 \\
 1 & 1 \\
 \end{bmatrix}
 +  \sigma_{\omega}^2 
\begin{bmatrix} 
\| x \|_{\R^I}^2 & \langle x , y \rangle_{\R^I} \\
\langle x , y \rangle_{\R^I} & \| y \|_{\R^I}^2 \\
\end{bmatrix},\\
 &\Sigma(l)= \sigma_b^2
\begin{bmatrix}
 1 & 1 \\
 1 & 1 \\
 \end{bmatrix}
 +  \sigma_{\omega}^2  \int
\begin{bmatrix} 
| \phi(u) |^2 &  \phi(u) \phi(v) \\
\phi(u) \phi(v) & | \phi(v) |^2 \\
\end{bmatrix}
q^{(l-1)} (\dd u, \dd v),
\end{split}
\]
where $q^{(l-1)}= N_2(\textbf{0}, \Sigma (l-1))$. We want to find two values $\alpha >0$ and $\beta >0$, and a constant $H^{(l)}>0$ such that
\[
\E \Big[| f^{(l)}_i(y)-f^{(l)}_i(x) |^{\alpha} \Big] \leq H^{(l)} \|y-x \|_{\R^I}^{I+\beta}.
\]
Defining $\textbf{a}^T=[1,-1]$ we have $f^{(l)}_i(y)-f^{(l)}_i(x) \sim N(\textbf{a}^T\textbf{0},\textbf{a}^T \Sigma(l) \textbf{a})$.
Consider $\alpha =2\theta$ with $\theta$ integer. Thus
\[
|f^{(l)}_i(y)-f^{(l)}_i(x)|^{2\theta} \sim | \sqrt{\textbf{a}^T\Sigma(l) \textbf{a}} N(0,1) |^{2\theta} \sim (\textbf{a}^T\Sigma(l) \textbf{a})^{\theta}  |N(0,1)|^{2\theta}.
\]
We proceed by induction over the layers. For $l=1$,
\[
\begin{split}
\E \Big[| f^{(1)}_i(y)-f^{(1)}_i(x) |^{2\theta} \Big] &= C_{\theta} (\textbf{a}^T\Sigma(1) \textbf{a})^{\theta}\\
&=C_{\theta} (\sigma_{\omega}^2 \|y \|_{\R^I}^{2} -2\sigma_{\omega}^2 \langle y,x \rangle_{\R^I} +\sigma_{\omega}^2 \| x \|_{\R^I}^2 )^{\theta}\\
&=C_{\theta} (\sigma_{\omega}^2)^{\theta} ( \|y \|_{\R^I}^{2} -2\langle y,x \rangle_{\R^I} + \| x \|_{\R^I}^2 )^{\theta}\\
&= C_{\theta} (\sigma_{\omega}^2)^{\theta} \|y-x \|_{\R^I}^{2\theta},
\end{split}
\]
where $C_{\theta}$ is the $\theta$-th moment of the chi-square distribution with one degree of freedom. By hypothesis $\phi$ is Lipschitz.
\[
\int |u-v|^{2\theta} q^{(l-1)}(\dd u, \dd v) \leq H^{(l-1)} \|y-x \|^{2\theta}_{\R^I}.
\]
Then,
\[
\begin{split}
| f^{(l)}_i(y)-f^{(l)}_i(x) |^{2\theta} &\sim |N(0,1)|^{2\theta} (\textbf{a}^T\Sigma(l) \textbf{a})^{\theta}\\
& = |N(0,1)|^{2\theta} \Big(  \sigma^2_{\omega}\int  [ |\phi(u)|^2-2\phi(u)\phi(v)+|\phi(v)|^2 ] q^{(l-1)}(\dd u, \dd v) \Big)^{\theta}\\
& = |N(0,1)|^{2\theta} \Big(  \sigma^2_{\omega}\int | \phi(u)-\phi(v) |^2 q^{(l-1)}(\dd u, \dd v) \Big)^{\theta}\\
& \leq |N(0,1)|^{2\theta} (\sigma^2_{\omega} L_{\phi}^2)^{\theta} \Big(  \int |u-v|^2 q^{(l-1)}(\dd u, \dd v) \Big)^{\theta}\\
& \leq |N(0,1)|^{2\theta} (\sigma^2_{\omega} L_{\phi}^2)^{\theta} \int |u-v|^{2\theta} q^{(l-1)}(\dd u, \dd v)\\
& \leq |N(0,1)|^{2\theta} (\sigma^2_{\omega} L_{\phi}^2)^{\theta} H^{(l-1)}\| y-x \|_{\R^{I}}^{2\theta}.
\end{split}
\]
Thus we conclude
\[
\begin{split}
\E \Big[| f^{(l)}_i(y)-f^{(l)}_i(x) |^{2\theta} \Big] \leq H^{(l)}\|y - x \|_{\R^I}^{2\theta},
\end{split}
\]
where the constant $H^{(l)}$ can be explicitly derived by solving the following system
\[
\begin{sistema}
H^{(1)} = C_{\theta} (\sigma^2_{\omega})^{\theta} \\
H^{(l)} = C_{\theta} (\sigma^2_{\omega} L_{\phi}^2)^{\theta} H^{(l-1)}.
\end{sistema}
\]
It is easy to get $H^{(l)}= C_{\theta}^l(\sigma^2_{\omega})^{l\theta} (L_{\phi}^2)^{(l-1)\theta}$. Notice that this quantity does not depend on $i$. Therefore, by Proposition \ref{Kolmogorov_Chentsov_0}, by placing $\alpha=2\theta$ and $\beta=2\theta-I$, for every $\theta>I/2$ ($\beta$ needs to be positive then we take $\theta>I/2$) there exists a continuous version $f^{(l)(\theta)}_i$ of the process $f^{(l)}_i$ with $\mathbb{P}$-a.s. locally $\gamma$-H\"older paths for every $0<\gamma<1-\frac{I}{2\theta}$.
\begin{itemize}
\item Thus $f^{(l)(\theta)}_i$ and $f^{(l)}_i$ are indistinguishable (same trajectories), i.e there exists $\Omega^{(\theta)}\subset \Omega$ with $\mathbb{P} (\Omega^{(\theta)})=1$ such that for each $\omega \in \Omega^{(\theta)}$, $x \mapsto f^{(l)}_i(x)(\omega)$ is locally $\gamma$-H\"older for each $0<\gamma<1-\frac{I}{2\theta}$.
\item Define $\Omega^{\star}= \bigcap_{\theta > I/2}\Omega^{(\theta)}$, then for each $0 < \delta_0 < 1$ there exists $\theta_0$ such that $\delta_0 < 1- \frac{I}{2\theta_0} < 1$, thus for each $\omega \in \Omega^{\star} \subset \Omega^{(\theta_0)}$, the trajectory $x \mapsto f^{(l)}_i(x)(\omega)$ is locally $\delta_0$-H\"older continuous.
\end{itemize}
By Proposition \ref{Kolmogorov_Chentsov_0} we can conclude that $f^{(l)}_i$ has a continuous version and the latter is $\mathbb{P}$-a.s locally $\gamma$-H\"older continuous for every $0<\gamma<1$.

\section*{SM B.3}\zlabel{sec:appB3}
Fix $i \geq 1$, $l\geq1$. We apply Proposition \ref{Kolmogorov_Chentsov} to show the uniform tightness of the sequence $(f^{(l)}_i(n))_{n\geq 1}$ in $C(\R^I;\R)$. By Lemma \ref{Lemma2} $f^{(l)}_i(1),f^{(l)}_i(2), \dots$ are random elements in $C(\R^I;\R)$.  First we show that the sequence $f(0_{\R^I},n)_{n \geq 1}$ is uniformly tight in $\R$. We use the following statement from \cite[Theorem 11.5.3]{dudley2002real}
\begin{prop} \label{Unif_tight}
Let $(C,\rho)$ be a metric space and suppose $f(n)\stackrel{d}{\rightarrow}f$ where $f(n)$ is tight for all $n$. Then $f(n)_{n \geq 1}$ is uniformly tight. 
\end{prop}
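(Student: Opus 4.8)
The statement is of Prokhorov type, and the plan is to produce, for each prescribed $\epsilon>0$, a single compact set $K$ with $\sup_n\P[f(n)\notin K]<\epsilon$, assembled from a countable family of finite ball-covers that are effective uniformly in $n$.

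First I would record the elementary reduction that any finite family of tight laws is uniformly tight: if $\mu_1,\dots,\mu_m$ are tight, choose compacts $K_j$ with $\mu_j(K_j^c)<\epsilon$ and use $\bigcup_{j\le m}K_j$. Hence only the tail of $(f(n))_{n\ge1}$ requires a genuinely uniform argument. Next, for each $m\ge1$ I would construct a finite union $H_m$ of open balls of radius $1/m$ with $\sup_n\P[f(n)\notin H_m]<\epsilon\,2^{-m-1}$. This step uses both hypotheses together with the Polish structure of $C$ present in the application (e.g.\ $C=\R$): write $C$ as an increasing union of finite unions of $1/m$-balls $H^{(1)}\subset H^{(2)}\subset\cdots$; countable additivity of the limit law gives $\P[f\in H^{(j)}]\uparrow1$, so fix $j$ with $\P[f\notin H^{(j)}]<\epsilon\,2^{-m-2}$; since $C\setminus H^{(j)}$ is closed, the portmanteau theorem gives $\limsup_n\P[f(n)\notin H^{(j)}]\le\P[f\notin H^{(j)}]$, which controls all but finitely many indices, and each of the finitely many remaining $n$ is handled separately by tightness of $f(n)$ — cover a compact carrying all but $\epsilon\,2^{-m-1}$ of its mass by finitely many $1/m$-balls and adjoin them to $H^{(j)}$ to form $H_m$.

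Then I would set $A=\bigcap_{m\ge1}H_m$ and $K=\overline A$. Since $A$ lies in a finite union of $1/m$-balls for every $m$, $A$ and hence $\overline A$ is totally bounded, so completeness of $C$ makes $K$ compact; and by countable subadditivity $\P[f(n)\notin K]\le\P[f(n)\notin A]\le\sum_{m\ge1}\P[f(n)\notin H_m]<\sum_{m\ge1}\epsilon\,2^{-m-1}=\epsilon/2<\epsilon$ for every $n$, which is uniform tightness.

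I expect the crux to be exactly the uniformity in $n$ when building $H_m$: a neighbourhood of a compact set is not relatively compact in a general metric space, so one cannot simply fatten a compact carrying most of the limit's mass, and it is the totally-bounded-intersection device (powered by completeness) that converts the per-$m$ finite ball-covers into an honest compact. In the case $C=\R$ needed for Lemma~\ref{Lemma3} everything collapses to the textbook argument: pick $M_0$ with $\pm M_0$ continuity points of the limiting c.d.f.\ and $\P[|f|\le M_0]>1-\epsilon$, note $\P[|f(n)|\le M_0]\to\P[|f|\le M_0]$, handle the finitely many small $n$ through their own tightness, and take $K=[-M,M]$ with $M$ the maximum of the resulting thresholds.
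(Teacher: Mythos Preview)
The paper does not prove this proposition; it is simply quoted from \cite[Theorem~11.5.3]{dudley2002real} and invoked as a black box in the proof of Lemma~\ref{Lemma3}. There is therefore no in-paper argument to compare yours against.

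Your sketch is correct under the extra hypothesis you already flag, namely that $(C,\rho)$ is Polish: separability lets you exhaust $C$ by increasing finite unions of $1/m$-balls, and completeness turns ``closed and totally bounded'' into ``compact''. Since the only use in the paper is $C=\R$, this is entirely adequate, and your closing paragraph gives exactly the standard real-line shortcut. The one point worth recording is that Dudley's theorem, and the proposition as stated, is for an arbitrary metric space (with the limit law also assumed tight, i.e.\ tightness for $n=0,1,2,\dots$). His argument avoids completeness by a different compactness mechanism: pick a compact $K$ with $P_0(K^c)<\epsilon/2$; use portmanteau on the open neighbourhoods $K^{1/m}$ to get $P_n((K^{1/m})^c)<\epsilon/2$ for all $n>N_m$; for each $n$ take a compact $J_n$ with $P_n(J_n^c)<\epsilon/2$; and form $L=K\cup J_1\cup\dots\cup J_{N_1}\cup\bigcup_{m\ge1}\bigcup_{N_m<n\le N_{m+1}}(J_n\cap\overline{K^{1/m}})$. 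Sequential compactness of $L$ follows because any sequence escaping the finitely many leading compacts has $d(\cdot,K)\to0$ along a subsequence and hence accumulates in $K$ --- no completeness needed. Your totally-bounded-intersection construction is the natural Polish-space substitute for this step, and is perfectly sufficient for the paper's purposes.
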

Since $(\R,|\cdot|)$ is Polish every probability measure is tight, then $f(0_{\R^I},n)$ is tight in $\R$ for every $n$. Moreover, by Lemma \ref{Lemma1} $f_i(0_{\R^I},n)_{n \geq 1}\stackrel{d}{\rightarrow}f_i^{(l)}(0_{\R^I})$, then by Proposition (\ref{Unif_tight}) $f(0_{\R^I},n)_{n \geq 1}$ is uniformly tight in $\R$. In order to apply Proposition \ref{Kolmogorov_Chentsov} it remains to show that there exist two values $\alpha >0$ and $\beta >0$, and a constant $H^{(l)}>0$ such that
\[
\E \Big[| f_{i}^{(l)}(y,n)-f_{i}^{(l)}(x,n) |^{\alpha} \Big] \leq H^{(l)} \|y-x \|_{\R^I}^{I+\beta}, \quad x,y \in \R^I, n \in \N
\]
uniformly in $n$. The first idea could be try to bound (uniformly in $n$) the expected value of $H^{(l)}_i(n)$ obtained in (\ref{Lipsch_i_n}), but this turns out to be very difficult.  Thus we choose another way. Take two points $x,y \in \R^I$. From (\ref{f1k}) we know that $f_{i}^{(l)}(y,n)|f^{(l-1)}_{1,\dots ,n} \sim N(0,\sigma^2_y(l,n))$ and $f_{i}^{(l)}(x,n)|f^{(l-1)}_{1,\dots ,n} \sim N(0, \sigma^2_x(l,n))$ with joint distribution $N_2(\textbf{0}, \Sigma(l,n))$, where

\[
  \begin{split}
& \Sigma(1)=
\begin{bmatrix}
  \sigma^2_x(1) & \Sigma(1)_{x,y} \\
  \Sigma(1)_{x,y} & \sigma^2_y(1) \\
  \end{bmatrix},
  \\
  &\Sigma(l)= 
\begin{bmatrix}
  \sigma^2_x(l,n) & \Sigma(l,n)_{x,y} \\
  \Sigma(l,n)_{x,y} & \sigma^2_y(l,n) \\
  \end{bmatrix},
\end{split}
\]
with,
\[
\begin{split}
& \sigma^2_x(1) = \sigma_{b}^2 + \sigma_{\omega}^2 \| x \|_{\R^I}^2,\\
& \sigma^2_y(1) = \sigma_{b}^2 + \sigma_{\omega}^2 \| y \|_{\R^I}^2,\\
& \Sigma(1)_{x,y} = \sigma_{b}^2 + \sigma_{\omega}^2 \langle x , y \rangle_{\R^I},\\
&  \sigma^2_x(l,n) = \sigma_{b}^2 + \frac{\sigma_{\omega}^2}{n} \sum_{j=1}^n | \phi \circ f_j^{(l-1)} (x,n) |^2, \\
& \sigma^2_y(l,n) = \sigma_{b}^2 + \frac{\sigma_{\omega}^2}{n} \sum_{j=1}^n | \phi \circ f_j^{(l-1)} (y,n) |^2, \\
& \Sigma(l,n)_{x,y} = \sigma_{b}^2 + \frac{\sigma_{\omega}^2}{n} \sum_{j=1}^n \phi( f^{(l-1)}_j(x,n)) \phi(f^{(l-1)}_j(y,n))
\end{split}
\]
Defining $\textbf{a}^T=[1,-1]$ we have that $f_{i}^{(l)}(y,n)|f^{(l-1)}_{1,\dots ,n}-f_{i}^{(l)}(x,n)|f^{(l-1)}_{1,\dots ,n}$ is distributed as $N(\textbf{a}^T\textbf{0},\textbf{a}^T \Sigma(l,n) \textbf{a})$, where
\[
\textbf{a}^T \Sigma(l,n) \textbf{a} = \sigma_y^2(l,n)-2\Sigma(l,n)_{x,y}+\sigma_x^2(l,n).
\] 
Consider $\alpha=2\theta$ with $\theta$ integer. Thus
\[
\Big|f_{i}^{(l)}(y,n)|f^{(l-1)}_{1,\dots ,n}-f_{i}^{(l)}(x,n)|f^{(l-1)}_{1,\dots ,n}\Big|^{2\theta} \sim | \sqrt{\textbf{a}^T\Sigma(l,n) \textbf{a}} N(0,1) |^{2\theta} \sim (\textbf{a}^T\Sigma(l,n) \textbf{a})^{\theta}  |N(0,1)| ^{2\theta}.
\]
Start first with the case $l=1$.
\[
\begin{split}
\E \Big[| f_i^{(1)}(y,n)-f_i^{(1)}(x,n)|^{2\theta} \Big] &= C_{\theta} (\textbf{a}^T\Sigma(1) \textbf{a})^{\theta}\\
&=C_{\theta} (\sigma_{\omega}^2 \|y \|_{\R^I}^{2} -2\sigma_{\omega}^2 \langle y,x \rangle_{\R^I} +\sigma_{\omega}^2 \| x \|_{\R^I}^2 )^{\theta}\\
&=C_{\theta} (\sigma_{\omega}^2)^{\theta} ( \|y \|_{\R^I}^{2} -2\langle y,x \rangle_{\R^I} + \| x \|_{\R^I}^2 )^{\theta}\\
&= C_{\theta} (\sigma_{\omega}^2)^{\theta} \|y-x \|_{\R^I}^{2\theta},
\end{split}
\]
where $C_{\theta}$ is the $\theta$-th moment of the chi-square distribution with one degree of freedom. Set $H^{(1)}=C_{\theta} (\sigma_{\omega}^2)^{\theta}$.  By hypothesis induction suppose that for every $j\geq 1$
\[
\E \Big[| f^{(l-1)}_j (y,n)- f^{(l-1)}_j (x,n) |^{2\theta}\Big] \leq H^{(l-1)} \| y-x \|_{\R^I}^{2\theta}.
\]
By hypothesis $\phi$ is Lipschitz, then
\[
\begin{split}
\E\Big[ | f_{i}^{(l)}(y,n)-f_{i}^{(l)}(x,n) |^{2\theta} \Big|f^{(l-1)}_{1,\dots ,n} \Big] & = C_{\theta} (\textbf{a}^T\Sigma(l,n) \textbf{a})^{\theta}\\
&=C_{\theta}  \Big( \sigma_y^2(l,n)-2\Sigma(l,n)_{x,y}+\sigma_x^2(l,n)  \Big)^{\theta}\\
&= C_{\theta}  \Big( \frac{\sigma^2_{\omega}}{n} \sum_{j=1}^n \Big|  \phi \circ f_j^{(l-1)} (y,n) - \phi \circ f_j^{(l-1)} (x,n) \Big|^2  \Big)^{\theta} \\
&\leq C_{\theta}  \Big( \frac{\sigma^2_{\omega}L_{\phi}^2}{n} \sum_{j=1}^n\Big|  f_j^{(l-1)} (y,n)  - f_j^{(l-1)} (x,n)  \Big|^2 \Big)^{\theta}\\
&= C_{\theta}  \frac{(\sigma^2_{\omega}L_{\phi}^2)^{\theta}}{n^{\theta}} \Big( \sum_{j=1}^n\Big|  f_j^{(l-1)} (y,n)  - f_j^{(l-1)} (x,n)  \Big|^2 \Big)^{\theta}\\
&\leq C_{\theta}  \frac{(\sigma^2_{\omega}L_{\phi}^2)^{\theta}}{n} \sum_{j=1}^n\Big|  f_j^{(l-1)} (y,n)  - f_j^{(l-1)} (x,n)  \Big|^{2\theta}.\\
\end{split}
\]
Using the induction hypothesis 
\[
\begin{split}
\E \Big[ | f_{i}^{(l)}(y,n)-f_{i}^{(l)}(x,n) |^{2\theta} \Big] &=\E \Big[ \E \Big[ | f_{i}^{(l)}(y,n)-f_{i}^{(l)}(x,n) |^{2\theta} \Big|f^{(l-1)}_{1,\dots ,n} \Big]\Big] \\
& \leq C_{\theta} \frac{(\sigma^2_{\omega} L_{\phi}^2)^{\theta}}{n} \sum_{j=1}^n \E \Big[ |  f_j^{(l-1)} (y,n)  - f_j^{(l-1)} (x,n) |^{2\theta} \Big] \\
& \leq C_{\theta} (\sigma^2_{\omega} L_{\phi}^2)^{\theta} H^{(l-1)} \| y-x \|^{2\theta}_{\R^I}.
\end{split}
\]
We can get the constant $H^{(l)}$ by solving the same system as (\ref{H^l}), obtaining $H^{(l)}= C_{\theta}^l(\sigma^2_{\omega})^{l\theta} (L_{\phi}^2)^{(l-1)\theta}$ which does not depend on $n$. By Proposition \ref{Kolmogorov_Chentsov} setting $\alpha=2\theta$ and $\beta= 2\theta-I$, since $\beta$ must be a positive constant, it is sufficient to take $\theta >I/2$ and this concludes the proof.

\section*{SM C}\zlabel{sec:appC}
Fix $k$ inputs $\textbf{X}=[x^{(1)}, \dots , x^{(k)}]$ and a layer $l$. We show that as $n \rightarrow +\infty$
\begin{align*}
\Big( f^{(l)}_i(\textbf{X},n) \Big)_{i \geq 1} \stackrel{d}{\rightarrow} \bigotimes_{i=1}^{\infty} N_k(\textbf{0},\Sigma(l))
\end{align*}
where $\bigotimes$ denotes the product measure and with $\Sigma(l)$ as in (\ref{thm2}).
We prove this statement by proving the $n$ large asymptotic behaviour of any finite linear combination of the $f^{(l)}_i(\textbf{X},n)$'s, for $i \in \mathcal L \subset \N$. See, e.g. \cite{billingsley1999convergence} for details. Following the notation of \cite{matthews2018gaussianB}, consider a finite linear combination of the function values without the bias, i.e.,
\[
\mathcal T^{(l)}(\mathcal L,p,\textbf{X},n)= \sum_{i \in \mathcal L} p_i[f^{(l)}_i(\textbf{X},n)-b_i^{(l)}\textbf{1}].
\]
Then for the first layer we write 
\[
\begin{split}
\mathcal T^{(1)}(\mathcal L,p,\textbf{X}) & = \sum_{i \in \mathcal L} p_i \Big[ \sum_{j=1}^I \omega_{i,j}^{(1)} \textbf{x}_{j} \Big]\\
&= \sum_{j=1}^I \gamma_j^{(1)}(\mathcal L, p, \textbf{X}),
\end{split}
\]
where 

\[
\begin{split}
\gamma_j^{(1)}(\mathcal L, p, \textbf{X})=\sum_{i \in \mathcal L} p_i \omega_{i,j}^{(l)}\textbf{x}_j.
\end{split}
\]

and for any $l \geq 2$
\[
\begin{split}
\mathcal T^{(l)}(\mathcal L,p,\textbf{X},n) & = \sum_{i \in \mathcal L} p_i \Big[ \frac{1}{\sqrt{n}}\sum_{j=1}^n \omega_{i,j}^{(l)}(\phi \bullet f_j^{(l-1)}(\textbf{X},n)) \Big]\\
&=\frac{1}{\sqrt{n}} \sum_{j=1}^n \gamma_j^{(l)}(\mathcal L, p, \textbf{X},n),
\end{split}
\]
where
\[
\begin{split}
\gamma_j^{(l)}(\mathcal L, p, \textbf{X},n)=\sum_{i \in \mathcal L} p_i \omega_{i,j}^{(l)}(\phi \bullet f_j^{(l-1)}(\textbf{X},n)).
\end{split}
\]
For the first layer we get

\[
\begin{split}
\varphi_{\mathcal T^{(1)}(\mathcal L,p,\textbf{X})}(\textbf{t})&=\E \Big[ e^{\ii \textbf{t}^T \mathcal T^{(1)}(\mathcal L,p,\textbf{X})} \Big] \\
&= \E \Big[ \exp \Big\{ \ii \textbf{t}^T \Big[ \sum_{j=1}^I \sum_{i \in \mathcal L} p_i \omega_{i,j}^{(1)}\textbf{x}_j \Big] \Big\} \Big] \\
&= \prod_{j=1}^I \prod_{i \in \mathcal L} \E \Big[ \exp \Big\{ \ii \textbf{t}^T \Big[ p_i \omega_{i,j}^{(1)} \textbf{x}_j \Big] \Big\} \Big] \\
&= \prod_{j=1}^I \prod_{i \in \mathcal L} \exp \Big\{ -\frac{\sigma^2_{\omega}}{2} p_i^2 \Big( \textbf{t}^T \textbf{x}_j \Big)^2 \Big\}  \\
&= \exp \Big\{ -\frac{\sigma^2_{\omega}}{2n} \sum_{i \in \mathcal L} p_i^2 \sum_{j=1}^n \Big( \textbf{t}^T \textbf{x}_j  \Big)^2 \Big\}  \\
&= \exp \Big\{ -\frac{1}{2} \textbf{t}^T \Theta(\mathcal L , p,1)\textbf{t} \Big\}, \\
\end{split} 
\]

i.e. 
\[
\mathcal T^{(1)}(\mathcal L,p,\textbf{X}) \stackrel{d}{=}N_k(\textbf{0},\Theta(\mathcal L , p,1)),
\]
with $k \times k$ covariance matrix with element in the $i$-th row and $j$-th column as follows
\[
\begin{split}
&\Theta_{i,j}(\mathcal L , p,1)=p^Tp\sigma_{\omega}^2 \langle  x^{(i)},  x^{(j)}\rangle_{\R^I},
\end{split}
\]
where $p^T p = \sum_{i \in \mathcal L} p_i^2$. For $l \geq 2$ we get
\[
\begin{split}
\varphi_{\mathcal T^{(l)}(\mathcal L,p,\textbf{X},n) | f_{1, \dots , n}^{(l-1)}}(\textbf{t})&=\E \Big[ e^{\ii \textbf{t}^T \mathcal T^{(l)}(\mathcal L,p,\textbf{X},n)} | f_{1, \dots , n}^{(l-1)} \Big] \\
&= \E \Big[ \exp \Big\{ \ii \textbf{t}^T \Big[ \frac{1}{\sqrt{n}}\sum_{j=1}^n \sum_{i \in \mathcal L} p_i \omega_{i,j}^{(l)}(\phi \bullet f_j^{(l-1)}(\textbf{X},n)) \Big] \Big\} | f_{1, \dots , n}^{(l-1)} \Big] \\
&= \prod_{j=1}^n \prod_{i \in \mathcal L} \E \Big[ \exp \Big\{ \ii \textbf{t}^T \Big[ \frac{1}{\sqrt{n}} p_i \omega_{i,j}^{(l)}(\phi \bullet f_j^{(l-1)}(\textbf{X},n)) \Big] \Big\} | f_{1, \dots , n}^{(l-1)} \Big] \\
&= \prod_{j=1}^n \prod_{i \in \mathcal L} \ \exp \Big\{ -\frac{\sigma^2_{\omega}}{2n} p_i^2 \Big( \textbf{t}^T (\phi \bullet f_j^{(l-1)}(\textbf{X},n)) \Big)^2 \Big\}  \\
&= \ \exp \Big\{ -\frac{\sigma^2_{\omega}}{2n} \sum_{i \in \mathcal L} p_i^2 \sum_{j=1}^n \Big( \textbf{t}^T (\phi \bullet f_j^{(l-1)}(\textbf{X},n)) \Big)^2 \Big\}  \\
&= \exp \Big\{ -\frac{1}{2} \textbf{t}^T \Theta(\mathcal L , p,l,n) \textbf{t} \Big\}, \\
\end{split} 
\]
i.e. 
\[
\mathcal T^{(l)}(\mathcal L,p,\textbf{X},n) | f_{1, \dots , n}^{(l-1)} \stackrel{d}{=}N_k(\textbf{0},\Theta(\mathcal L , p,l,n)),
\]
with $k \times k$ covariance matrix with element in the $i$-th row and $j$-th column as follows
\[
\begin{split}
&\Theta_{i,j}(\mathcal L , p,l,n)=p^Tp\frac{\sigma_{\omega}^2}{n} \Big\langle  ( \phi \bullet \textbf{F}^{(l-1)}_{i}({\textbf{X},n}) ),  ( \phi \bullet \textbf{F}^{(l-1)}_{j}({\textbf{X},n}) ) \Big\rangle_{\R^n},
\end{split}
\]
where $p^T p = \sum_{i \in \mathcal L} p_i^2$. Thus, along lines similar to the proof of the large $n$ asymptotics for the $i-th$ coordinate (just replacing $\sigma_b^2 \leftarrow 0$ and $\sigma^2_{\omega} \leftarrow p^Tp\sigma^2_{\omega}$), we have that for any $l \geq 2$, as $n \rightarrow +\infty$,
\[
\begin{split}
\varphi_{\mathcal T^{(l)}(\mathcal L,p,\textbf{X},n)}(\textbf{t}) & \rightarrow 
 \exp \Big\{ -\frac{1}{2} p^Tp\sigma_{\omega}^2 \int \Big( \textbf{t}^T ( \phi \bullet f )\Big)^2 q^{(l-1)}(\dd f) \Big] \Big\}\\
&= \exp \Big\{ -\frac{1}{2} \textbf{t}^T \Theta(\mathcal L , p,l) \textbf{t} \Big\}, \\
\end{split} 
\]
i.e. $\mathcal T^{(l)}(\mathcal L,p,\textbf{X},n)$ converges weakly to a $k$-dimensional Gaussian distribution with mean $\textbf{0}$ and $k \times k$ covariance matrix $\Theta(\mathcal L , p,l)$ with elements
\[
\begin{split}
&\Theta_{i,j}(\mathcal L , p,l)= p^Tp\sigma_{\omega}^2 \int\phi( f_i) \phi (f_j) q^{(l-1)}(\dd f ),
\end{split}
\]
where $q^{(l-1)}(\dd f)=q^{(l-1)}(\dd f_1, \dots, \dd f_k)=N_k(\textbf{0},\Theta(\mathcal L , p,l-1)) \dd f$. To complete the proof just observe that $\Theta(\mathcal L , p,l)=p^Tp \Sigma(l)$.

\section*{SM D.1}\zlabel{sec:appD1}

We will use, without explicit mention, that the series $\sum_{i=1}^{\infty}q^i$ converges when $|q|<1$. In particular when $q=\nicefrac{1}{2}$ the series sum to 1. Fix, $l\geq1$ and $n \in \N$ . We prove that there exists a random variable $H^{(l)}(n)$ such that 
\[
d\big(\textbf{F}^{(l)}(x,n),\textbf{F}^{(l)}(y,n)\big)_{\infty} \leq H^{(l)}(n)\| x-y \|_{\R^I}, \quad x,y \in \R^I,\mathbb{P}-a.s.
\]
It immediately derives from the Lipschitzianity of each component, indeed by (\ref{Lipschitz}) we get
\[
\begin{split}
d\big(\textbf{F}^{(l)}(x,n),\textbf{F}^{(l)}(y,n)\big)_{\infty} &= \sum_{i=1}^{\infty}\frac{1}{2^i}\frac{|f^{(l)}_i(x,n)-f^{(l)}_i(y,n)|}{1+|f^{(l)}_i(x,n)-f^{(l)}_i(y,n)|} \\
& \leq \sum_{i=1}^{\infty}\frac{1}{2^i}|f^{(l)}_i(x,n)-f^{(l)}_i(y,n)| \\
& \leq \| x-y \|_{\R^I} \sum_{i=1}^{\infty}\frac{1}{2^i}H^{(l)}_i(n).
\end{split}
\] 
It remains to show that the series $\sum_{i=1}^{\infty}\frac{1}{2^i}H^{(l)}_i(n)$ converges almost surely. By (\ref{Lipsch_i_n}) we get

\[
\begin{split}
\sum_{i=1}^{\infty}\frac{1}{2^i}H^{(l)}_i(n) &= \sum_{i=1}^{\infty}\frac{1}{2^i}\frac{L_{\phi}}{\sqrt{n}}\sum_{j=1}^n \big| \omega_{i,j}^{(l)} \big|H_j^{(l-1)}(n) \\
&= \frac{L_{\phi}}{\sqrt{n}}\sum_{j=1}^{n}H_j^{(l-1)}(n) \sum_{i=1}^{\infty}\frac{|\omega_{i,j}^{(l)}|}{2^i}.
\end{split}
\]

It remains to show the convergence almost surely of the series $\sum_{i=1}^{\infty}\frac{|\omega_{i,j}^{(l)}|}{2^i}$. We apply the three-series Kolmogorov criterion \citep[Theorem 4.18]{kallenberg2006foundations}. Call $X_i := \frac{|\omega_{i,j}^{(l)}|}{2^i}$
\begin{itemize}
\item By Markov inequality $\mathbb{P}(X_i>1)\leq \E[X_i]=\frac{\E[|N(0,\sigma_{\omega}^2)|]}{2^i}$, thus $\sum_{i=1}^{\infty}\mathbb{P}(X_i>1)\leq \E[|N(0,\sigma_{\omega}^2)|] < \infty$
\item Call $Y_i=X_i \mathbb{I}_{ \{ X_i \leq 1 \} } \leq X_i$. Then $\sum_{i=1}^{\infty} \E [Y_i] \leq \sum_{i=1}^{\infty} \E [X_i] = \sum_{i=1}^{\infty} \frac{\E [|N(0,\sigma_{\omega})|]}{2^i} = \E[|N(0,\sigma_{\omega}^2)|] < \infty$
\item $\mathrm{V}(Y_i)=\E[Y_i^2]- \E^2[Y_i]$, thus $\sum_{i=1}^{\infty}\mathrm{V}(Y_i)=\sum_{i=1}^{\infty}\E[Y_i^2]- \sum_{i=1}^{\infty}\E^2[Y_i]$. The first series converges since $\E[Y_i^2] \leq \E[X_i^2]=\frac{\sigma_{\omega}^2 \E[\chi^2(1)]}{4^i}=\frac{\sigma_{\omega}^2}{4^i}$ (then $\sum \E Y_i \leq \sigma_{\omega}^2 \sum \frac{1}{4^i}< \infty$), and the other series converges since $0 < \E[Y_i] \leq \E[X_i]$ implies $\E^2[Y_i] \leq \E^2[X_i]=\frac{\E^2[|N(0,\sigma_{\omega}^2)|]}{4^i}$ (then $\sum \E^2[Y_i] \leq \E^2[|N(0,\sigma_{\omega}^2)|] \sum \frac{1}{4^i} < \infty$).
\end{itemize}

Denoting $Q_j^{(l)} = \sum_{i=1}^{\infty}\frac{|\omega_{i,j}^{(l)}|}{2^i}$ and by setting $H^{(l)}(n):= \frac{L_{\phi}}{\sqrt{n}}\sum_{j=1}^{n}H_j^{(l-1)}(n)Q_j^{(l)}$ we complete the proof.

\section*{SM D.2}\zlabel{sec:appD2}
Fix $l\geq1$. We show the continuity of the limiting process $\textbf{F}^{(l)}$ by applying Proposition \ref{Kolmogorov_Chentsov_0}. We will use, without explicit mention, that the function $r \mapsto \frac{r}{1+r}$ is bounded by 1 for $r>0$. Take two inputs $x,y \in \R^I$ and fix $\alpha \geq 12$ even integer. Since $\sum_{i=1}^{\infty}\frac{1}{2^i}\frac{|f^{(l)}(x)-f^{(l)}_i(y)|}{1+|f^{(l)}(x)-f^{(l)}_i(y)|}<\sum_{i=1}^{\infty}\frac{1}{2^i}=1$ and, by Jensen inequality, also $\sum_{i=1}^{\infty}\frac{1}{2^i} \big( \frac{|f^{(l)}(x)-f^{(l)}_i(y)|}{1+|f^{(l)}(x)-f^{(l)}_i(y)|} \big)^{\alpha}<\sum_{i=1}^{\infty}\frac{1}{2^i}=1$, we get

\[
\begin{split}
d\big(\textbf{F}^{(l)}(x),\textbf{F}^{(l)}(y)\big)_{\infty} ^{\alpha} &= \Big( \sum_{i=1}^{\infty}\frac{1}{2^i}\frac{|f_i^{(l)}(x)-f^{(l)}_i(y)|}{1+|f_i^{(l)}(x)-f^{(l)}_i(y)|}\Big)^{\alpha} \\
&\leq \sum_{i=1}^{\infty}\frac{1}{2^i} \Big( \frac{|f_i^{(l)}(x)-f^{(l)}_i(y)|}{1+|f_i^{(l)}(x)-f^{(l)}_i(y)|}\Big)^{\alpha} \\
&\leq \sum_{i=1}^{\infty}\frac{1}{2^i} |f_i^{(l)}(x)-f^{(l)}_i(y)|^{\alpha} \\
\end{split}
\]

Thus, by applying monotone convergence theorem to the positive increasing sequence $g(N)=\sum_{i=1}^{N}\frac{1}{2^i} |f_i^{(l)}(x)-f^{(l)}_i(y)|^{\alpha}$ (which allows to exchange $\E$ and $\sum_{i=1}^{\infty}$), we get

\[
\begin{split}
\E \Big[ d\big(\textbf{F}^{(l)}(x),\textbf{F}^{(l)}(y)\big)_{\infty} ^{\alpha} \Big] &\leq \E \Big[ \sum_{i=1}^{\infty}\frac{1}{2^i} |f^{(l)}(x)-f^{(l)}_i(y)|^{\alpha} \Big]\\
& = \E \Big[ \lim_{N \rightarrow \infty} \sum_{i=1}^{N}\frac{1}{2^i} |f_i^{(l)}(x)-f^{(l)}_i(y)|^{\alpha} \Big] \\
& = \lim_{N \rightarrow \infty} \E \Big[ \sum_{i=1}^{N}\frac{1}{2^i} |f_i^{(l)}(x)-f^{(l)}_i(y)|^{\alpha} \Big] \\
& = \sum_{i=1}^{\infty}\frac{1}{2^i} \E \Big[ |f_i^{(l)}(x)-f^{(l)}_i(y)|^{\alpha} \Big] \\
& = \sum_{i=1}^{\infty}\frac{1}{2^i} H^{(l)} \|x - y \|^{\alpha}_{\R^I} \\
& = H^{(l)} \|x - y \|_{\R^I}^{\alpha} 
\end{split}
\]
where we used (\ref{expec}) and the fact that $H^{(l)}$ does not depend on $i$ (see (\ref{H^l})). 

Therefore, by Proposition \ref{Kolmogorov_Chentsov_0}, for each $\alpha > I$, setting $\beta=\alpha-I$ (since $\beta$ needs to be positive, it is sufficient to choose $\alpha>I$) $\textbf{F}^{(l)}$ has a continuous version $\textbf{F}^{(l)(\theta)}$ and the latter is $\mathbb{P}$-a.s locally $\gamma$-H\"older continuous for every $0<\gamma<1-\frac{I}{\alpha}$.

\begin{itemize}
\item Thus $\textbf{F}^{(l)(\alpha)}$ and $\textbf{F}^{(l)}$ are indistinguishable (same trajectories), i.e there exists $\Omega^{(\alpha)}\subset \Omega$ with $\mathbb{P} (\Omega^{(\alpha)})=1$ such that for each $\omega \in \Omega^{(\alpha)}$, $x \mapsto \textbf{F}^{(l)}(x)(\omega)$ is locally $\gamma$-H\"older for each $0<\gamma<1-\frac{I}{\alpha}$.
\item Define $\Omega^{\star}= \bigcap_{\alpha > I}\Omega^{(\alpha)}$, then for each $0 < \delta_0 < 1$ there exists $\alpha_0$ such that $\delta_0 < 1- \frac{I}{\alpha_0} < 1$, thus for each $\omega \in \Omega^{\star} \subset \Omega^{(\alpha_0)}$, the trajectory $x \mapsto \textbf{F}^{(l)}(x)(\omega)$ is locally $\delta_0$-H\"older continuous.
\end{itemize}
By Proposition \ref{Kolmogorov_Chentsov_0} we can conclude that $\textbf{F}^{(l)}$ has a continuous version and the latter is $\mathbb{P}$-a.s locally $\gamma$-H\"older continuous for every $0<\gamma<1$.

\section*{SM E}\zlabel{sec:appE}
\subsection*{General introduction to Daniell-Kolmogorov extension theorem}
Let $X$ be a set of indexes and $\{(E_x, \mathcal{E}_x)\}_{x \in X}$ measurable spaces. On $E:=\times_{x \in X}E_x$ we can consider the $\sigma$-algebra $\mathcal{E}:= \bigotimes_{x \in X} \mathcal{E}_x$ that is 

\[
\mathcal{E} = \sigma (\pi_x, x \in X)=\sigma \Big( \bigcup_{x \in X} \pi_x^{-1}(\mathcal{E}_x) \Big)
\]

where for each $x \in X$, $\pi_x:E \rightarrow E_x, \omega:=(\omega_x)_{x\in X} \mapsto \pi_x(\omega)=\omega_x$. $\mathcal{E}$ is generated by measurable rectangles. A measurable rectangle $A$ is of the form
\[
A:=\times_{x \in X}A_x \text{ such that only a finite number of $A_x \in \mathcal{E}_x$ are different from $E_x$}
\]

\subsection*{$\sigma$-algebra on the space of functions}
Fix $X=\R^I$ and $(S,d)$ Polish space. We consider the measurable sets $\{(E_x, \mathcal{E}_x)\}_{x \in X} = \{(S, \mathcal{B}(S))\}_{x \in \R^I}$ thus we can construct a measurable space
\[
(E,\mathcal{E})= (\times_{x \in X}E_x,\bigotimes_{x \in X} \mathcal{E}_x)= \Big( S^{\R^I}, \mathcal{B}(S^{\R^I}) \Big)
\]

where $S^{\R^I}=\times_{x \in \R^{I}}S$ is the set of all functions from $\R^I$ into $S$ and \[
\begin{split}
\mathcal{B}(S^{\R^I})&:=\bigotimes_{x\in \R^I}\mathcal{B}(S) \\
&= \sigma \Big( \bigcup_{x \in \R^I} \pi_x^{-1}(\mathcal{B}(S)) \Big) \\
&= \sigma \Big( \Big\{A:=\times_{x \in X}A_x \text{ such that only a finite number of $A_x$ are different from $S$ \Big\}} \Big)
\end{split}
\]

An example of measurable rectangle is 
\[
A= S \times A_{x^{(1)}} \times S \times A_{x^{(2)}} \times S \times S \times \dots \times A_{x^{(k)}} \times S \times S \times \dots
\]
where $k \in \N$ and only for $x^{(1)}, \dots , x^{(k)}$ the cartesian products are different to $S$. 

Denote by $Z=(Z_x)_{x\in\R^I}$,  $Z_x:(\Omega,\mathcal{H},\P) \rightarrow S$ any stochastic process of interest, such as $f^{(l)}_i(n)$ or $f^{(l)}_i$ for some $l \geq 1$, $i\geq 1$ and $n \geq 1$ when $S=(\R,|\cdot|)$, or even $\mathbf{F}^{(l)}(n)$ or $\mathbf{F}^{(l)}$ for $l \geq 1$ and $n \geq 1$ when $S=(\R^{\infty}, \|\cdot \|_{\infty})$. Consider the finite-dimensional distributions of $Z$ 
\[
\Lambda=\{P^Z_{x^{(1)}, \dots, x^{(k)}} \text{ on } \mathcal{B}(S^{k})| x^{(j)} \in \R^I, j  \in \{1, \dots, k\}, k \in \N\}
\]
If $\Lambda$ is consistent in the sense of Kolmogorov theorem, then there exists an unique probability measure $\P'$ on $(S^{\R^I}, \mathcal{B}(S^{\R^I}))$ such that the canonical process $Z'=(Z'_x)_{x \in \R^I}$, $Z'_x:S^{\R^I} \rightarrow S, \omega \mapsto Z'_x(\omega)= \omega(x)$ on $(S^{\R^I}, \mathcal{B}(S^{\R^I}), \P')$ has finite-dimensional distributions that coincide with $\Lambda$.
\subsection*{SM E.1 : Existence of a probability measure on $S^{\R^I}$ for the sequence processes}
Fix $S=\R$. Fix a layer $l$, a unit $i\geq 1$ on that layer and $n \in \N$. We want to prove that there exists a probability measure $\P^{(i,l,n)}$ on $(\R^{\R^I}, \mathcal{B}(\R^{\R^I}))$ such that the associated canonical process $\Theta^{(i,l,n)}_x: \R^{\R^I} \to \R$, $\omega \mapsto \omega(x)$ has finite-dimensional distributions that coincide with
\[
\Lambda^{(i,l,n)} = \Big\{ P^{(i,l,n)}_{x^{(1)}, \dots , x^{(k)}} \Big\}_{k \in \N},
\]
where $P^{(i,l,n)}_{x^{(1)}, \dots , x^{(k)}}$ is the distribution of $f^{(l)}_{i}(\textbf{X},n)$. We do not know the exact form of this distribution but we know the distribution of the conditioned random variable $f^{(l)}_{i}(\textbf{X},n)|f^{(l-1)}_{1,\dots , n}$ (see (\ref{f1k})). Thus, since from (\ref{f1k}) the distribution of $f^{(1)}_{i}(\textbf{X})$ is well known, proceeding by induction it is sufficient to prove the existence of two probability measures $\P^{(i,1,n)}$ and $\P^{(i,l,n)}_{|l-1}$ on $(\R^{\R^I}, \mathcal{B}(\R^{\R^I}))$ such that the associated canonical processes $\Theta^{(i,1,n)}_x$, and $\Theta^{(i,1,n)|l-1}_x$ have finite-dimensional distributions that coincide respectively with
\[
\Lambda^{(i,1,n)} := \Big\{ P^{(i,1,n)}_{x^{(1)}, \dots , x^{(k)}} \Big\}_{k \in \N} \quad \text{ and } \quad \Lambda^{(i,l,n)}_{|l-1} := \Big\{ P^{(i,l,n,)|l-1}_{x^{(1)}, \dots , x^{(k)}} \Big\}_{k \in \N},
\]
where $P^{(i,1,n)}_{x^{(1)}, \dots , x^{(k)}}=N_k(\textbf{0},\Sigma(1,\textbf{X}))$ and $P^{(i,l,n)|l-1}_{x^{(1)}, \dots , x^{(k)}}=N_k(\textbf{0},\Sigma(l,n,\textbf{X}))$ defined on $\mathcal B(\R^{k})$. Observe that, for simplicity of notation, we have always avoided to write the dependence of the covariance matrix on the inputs matrix $\textbf{X}$, but in this case it is important to emphasize this. For the proof we defer to the limit case in the next subsection since the proof is the same step by step. When $S=\R^{\infty}$, recall that given a sequence of probability spaces $\{(\R^{\R^I}, \mathcal{B}(\R^{\R^I}), \P^{(i,l,n)})\}_{i\geq 1}$ there exists a unique probability measure $\P^{(l,n)}$ on $(\times_{i=1}^{\infty}\R^{\R^I}, \bigotimes_{i=1}^{\infty}\mathcal{B}(\R^{\R^I})) = \big((\R^{\infty})^{\R^I}, \mathcal{B}((\R^{\infty})^{\R^I})\big)$ such that, for each measurable rectangle $A= \times_{i=1}^{\infty}A_i$ where only for a finite number of $i$ the set $A_i$ is different from $\R^{\R^I}$, then $\P^{(l,n)}(A) = \prod_{i=1}^{\infty}\P^{(i,l,n)}(A_i)$. Moreover this probability is denoted as $\P^{(l,n)}=:\bigotimes_{i=1}^{\infty}\P^{(i,l,n)}$. This means that the existence of the stochastic processes $f^{(l)}_i(n)$ implies the existence of the stochastic processes $\mathbf{F}^{(l)}(n)$.

\subsection*{SM E.2 : Existence of a probability measure on $S^{\R^I}$ for the limit process}
Note that, as observed in previous section, the existence of the stochastic processes $f^{(l)}_i$ on $(\R^{\R^I}, \mathcal{B}(\R^{\R^I}))$ implies the existence of the stochastic processes $\textbf{F}^{(l)}$ on $\big((\R^{\infty})^{\R^I}, \mathcal{B}((\R^{\infty})^{\R^I})\big)$. Then we focus on the proof when $S=\R$. Fix a layer $l$ and a unit $i\geq 1$ on that layer. We want to prove that there exists a probability measure $\P^{(i,l)}$ on $(\R^{\R^I}, \mathcal{B}(\R^{\R^I}))$ such that the canonical process $\Theta^{(i,l)}_x:\R^{\R^I} \to \R$, $\omega \mapsto \omega(x)$ has finite-dimensional distributions that coincide with
\[
\Lambda^{(i,l)} = \Big\{ P^{(i,l)}_{x^{(1)}, \dots , x^{(k)}} \Big\}_{k \in \N},
\]
where $P^{(i,l)}_{x^{(1)}, \dots , x^{(k)}}$ are the finite-dimensional distributions of $f^{(l)}_i$ determined in (\ref{thm2}), i.e. $P^{(i,l)}_{x^{(1)}, \dots , x^{(k)}}=N_k(\textbf{0},\Sigma(l,\textbf{X}))$ defined on $\mathcal B(\R^{k})$. By Daniell-Kolmogorov existence result \cite[Theorem 6.16]{kallenberg2006foundations} it is sufficient to prove that for each $k \in \N$ and for each $x^{(1)}, \dots , x^{(k)}$ elements on $\R^I$, then 
\begin{align} \label{comp}
\begin{split}
&P^{(i,l)}_{x^{(1)}, \dots, x^{(z)}, \dots , x^{(k)}}(B^{(1)} \times \dots \times B^{(z-1)} \times \R \times B^{(z+1)} \times \dots \times B^{(k)} ) \\
&= P^{(i,l)}_{x^{(1)}, \dots, x^{(z-1)}, x^{(z+1)},\dots , x^{(k)}}(B^{(1)} \times \dots \times B^{(z-1)} \times B^{(z+1)} \times \dots \times B^{(k)} ),
\end{split}
\end{align}
for every $z \in \{1, \dots , k \}$ and for every $B^{(j)} \in \mathcal B(\R)$ for all $j=1, \dots , k$, $j \neq z$. Fix $k \in \N$, $k$ inputs $x^{(1)}, \dots , x^{(k)}$, $z \in \{1, \dots , k \}$ and $B^{(j)} \in \mathcal B(\R)$ for all $j=1, \dots , k$, $j \neq z$. Define the projection $\pi_{[z]}: \R^k \to \R^{k-1}$ such that $\pi_{[k]}(y_1, \dots , y_k)=[y_1, \dots ,y_{z-1}, y_{z+1}, \dots y_{k}]^T$. Thus, condition (\ref{comp}) is equivalent to the following:
\[
P^{(i,l)}_{x^{(1)}, \dots , x^{(k)}} \circ \pi_{[z]} = P^{(i,l)}_{\pi_{[z]}(x^{(1)}, \dots , x^{(k))}},
\]
where on the left we have the image measure of $P^{(i,l)}_{x^{(1)}, \dots , x^{(k)}}$ under $\pi_{[z]}$. We prove this by proving that the respective Fourier transformations coincide. In the following calculations we define $\textbf{y}=[y_1, \dots ,y_k]^T$, $\textbf{y}_{[z]}=[y_1, \dots, y_{z-1} , y_{z+1}, \dots , y_k]^T$ and $\textbf{t}=[t_1, \dots ,t_k]^T$, $\textbf{t}_{[k]}=[t_1,\dots , t_{z-1}, t_{z+1}, \dots ,t_k]^T$, then by definition of image measure we get
\[
\begin{split}
\varphi_{ \big(P^{(i,l)}_{x^{(1)}, \dots , x^{(k)}} \circ \pi_{[z]}\big) }(\textbf{t}_{[z]})&= \int_{\R^{k-1}} e^{ \ii  \textbf{t}_{[z]}^T \textbf{y}_{[z]}} \big(P^{(i,l)}_{x^{(1)}, \dots , x^{(k)}} \circ \pi_{[z]} \big) (\dd \textbf{y}_{[z]}) \\
&= \int_{\R^{k}} e^{ \ii  \textbf{t}_{[z]}^T \pi_{[z]}(\textbf{y})}P^{(i,l)}_{x^{(1)}, \dots , x^{(k)}} (\dd \textbf{y}).
\end{split}
\]
Now, recalling that $\textbf{1}_j$ is the $k \times 1$ vector with $1$ in the $j$-th position and $0$ otherwise, since $\pi_{[z]}(\textbf{y})=\textbf{y}_{[z]}$, defining $\pi^{\star}_{[z]}(\textbf{t})=\sum_{j=1 \\ j \neq z}^{k}\textbf{1}_jt_{j}$ we get $\textbf{t}_{[z]}^T \pi_{[z]}(\textbf{y})= \textbf{y}^T \pi^{\star}_{[z]}(\textbf{t})$. Then
\[
\begin{split}
\varphi_{ \big(P^{(i,l)}_{x^{(1)}, \dots , x^{(k)}} \circ \pi_{[z]}\big) }(\textbf{t}_{[z]})&= \int_{\R^{k}} e^{ \ii  \textbf{y}^T \pi^{\star}_{[z]}(\textbf{t})} P^{(i,l)}_{x^{(1)}, \dots , x^{(k)}} (\dd \textbf{y}) \\
&=\varphi_{P^{(i,l)}_{x^{(1)}, \dots , x^{(k)}}}(\pi^{\star}_{[z]}(\textbf{t}))\\
&=\varphi_{N_k(\textbf{0},\Sigma(l,\textbf{X}))}(\pi^{\star}_{[z]}(\textbf{t}))\\
&=\exp \big\{-\frac{1}{2} \pi^{\star}_{[z]}(\textbf{t})^T \Sigma(l,\textbf{X}) \pi^{\star}_{[z]}(\textbf{t}) \big\}\\
&=\exp \big\{-\frac{1}{2} \textbf{t}_{[z]}^T \widehat{\Sigma}(l,\textbf{X}) \textbf{t}_{[z]} \big\},
\end{split}
\]
where $\widehat{\Sigma}(l,\textbf{X})$ is the matrix $\Sigma(l,\textbf{X})$ without the $z$-th row and the $z$-th column. But since $\widehat{\Sigma}(l,\textbf{X})=\Sigma(l,\pi_{[z]}(x^{(1)}, \dots, x^{(k)}) )$ we get $\varphi_{ \big( P^{(i,l)}_{x^{(1)}, \dots , x^{(k)}} \circ \pi_{[z]} \big)}(\textbf{t}_{[z]}) = \varphi_{ P^{(i,l)}_{\pi_{[z]}(x^{(1)}, \dots , x^{(k)})} }(\textbf{t}_{[z]})$ for each $\textbf{t}_{[z]}$ and thus the two Fourier transformations coincides, as we wanted to prove.

\subsection*{SM E.3: Existence of a probability measure on $C(\R^I;\R)$}

If $Z$ is, in addition, a continuous stochastic process then we will show that there exists a probability measure $\P^Z$ on $C(\R^I;\R)\subset \R^{\R^I}$ endowed with a $\sigma$-algebra $\mathcal{G} \subset \mathcal{B}(\R^{\R^I})$ such that the finite-dimensional distribution of $Z'$ and $Z$ coincide. 

As suggested by \cite{kallenberg2006foundations} (page 311) we consider $C(\R^I;\R)$ with the topology of uniform convergence on compacts, that is
\begin{align}\label{metric_on_C}
\begin{sistema}
\rho_{\R}: C(\R^I;\R) \times C(\R^I;\R) \rightarrow [0, \infty),\\
\qquad (\omega_1,\omega_2) \mapsto \rho_S(\omega_1,\omega_2)=\sum_{R=1}^{\infty}\frac{1}{2^R}\sup_{x \in \overline{B}_R(0)} \xi(|\omega_1(x) -\omega_2(x)|_{\R})
\end{sistema}
\end{align}
The Borel $\sigma$-field  $\mathcal{G}:=\mathcal{B}(C(\R^I;\R),\rho_{\R})$ is generated by the evaluation maps $\pi_x$, thus it coincide with the product $\sigma$-field, i.e. $\mathcal{G}=\sigma(\Gamma)$, where
\[
\Gamma = \big\{ \Gamma_{x^{(1)}, \dots, x^{(k)}}(A) | A = A_{x^{(1)}} \times \dots \times A_{x^{(k)}}, A_{x^{(j)}} \in \mathcal{B}(\R), x^{(j)}\in \R^I, j \in \{1, \dots, k\},k \in \N \big\}
\] 
where $\Gamma_{x^{(1)}, \dots, x^{(k)}}(A)=\big\{ \omega \in C(\R^I;\R) | \omega(x^{(1)}) \in A_{x^{(1)}}, \dots , \omega(x^{(k)}) \in A_{x^{(k)}} \big\}$. 
Note that since $\sigma(\Gamma)\subset \mathcal{B}(\R^{\R^I})$ then $\mathcal{G}=\sigma(\Gamma) \subset \mathcal{B}(\R^{\R^I})$.

\begin{theorem}
There exists a unique probability measure $\P^Z$ on $(C(\R^I;\R),\mathcal{G})$ such that the canonical process $Z'$ restricted to $(C(\R^I;\R),\mathcal{G}))$ has finite-dimensional distributions that coincide with those of $Z$.
\end{theorem}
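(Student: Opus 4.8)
The plan is to obtain $\P^Z$ as the law (image measure) of the path map of $Z$, exploiting the continuity hypothesis, rather than attempting to restrict the Daniell--Kolmogorov measure $\P'$ built on $(\R^{\R^I},\mathcal B(\R^{\R^I}))$ in the previous subsections. The latter route is delicate because $C(\R^I;\R)$ need not belong to $\mathcal B(\R^{\R^I})$: the product $\sigma$-field over an uncountable index set only contains events depending on countably many coordinates, whereas continuity is an uncountable condition. Working from the source space $(\Omega,\mathcal H,\P)$ sidesteps this entirely.

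First I would fix a $\P$-conull event $\Omega_0\in\mathcal H$ on which every sample path $x\mapsto Z_x(\omega)$ is continuous --- this is exactly what the continuity hypothesis on $Z$ provides --- and define
\[
\Psi\colon\Omega_0\longrightarrow C(\R^I;\R),\qquad \Psi(\omega)=\big(x\mapsto Z_x(\omega)\big).
\]
The key point is that $\Psi$ is $(\mathcal H,\mathcal G)$-measurable. Indeed $\mathcal G=\sigma(\pi_x:x\in\R^I)$ is generated by the evaluation maps, and for each $x$ the composition $\pi_x\circ\Psi=Z_x$ is $\mathcal H$-measurable because $Z$ is a stochastic process; hence $\Psi^{-1}$ sends the generators of $\mathcal G$ into $\mathcal H$, which suffices. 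I then define $\P^Z:=\P\circ\Psi^{-1}$, which is a well-defined probability measure on $(C(\R^I;\R),\mathcal G)$ since $\P(\Omega_0)=1$ and $\Omega_0\in\mathcal H$.

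To verify that the finite-dimensional distributions of the restricted canonical process $Z'$ under $\P^Z$ coincide with those of $Z$, it is enough to test $\P^Z$ on the generating cylinders $\Gamma_{x^{(1)},\dots,x^{(k)}}(A)$ with $A=A_{x^{(1)}}\times\dots\times A_{x^{(k)}}$, $A_{x^{(j)}}\in\mathcal B(\R)$. By the definition of image measure,
\[
\P^Z\big(\Gamma_{x^{(1)},\dots,x^{(k)}}(A)\big)=\P\big(Z_{x^{(1)}}\in A_{x^{(1)}},\dots,Z_{x^{(k)}}\in A_{x^{(k)}}\big)=P^Z_{x^{(1)},\dots,x^{(k)}}(A),
\]
which is the prescribed finite-dimensional law (and agrees with that of the canonical process on $(\R^{\R^I},\mathcal B(\R^{\R^I}),\P')$ constructed above). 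Uniqueness is a routine application of Dynkin's $\pi$-$\lambda$ theorem: the family $\Gamma$ of finite-dimensional cylinders is a $\pi$-system (the intersection of two cylinders is again a cylinder, taken over the union of the two finite index sets), it contains the whole space $C(\R^I;\R)=\pi_x^{-1}(\R)$, and $\sigma(\Gamma)=\mathcal G$; hence two probability measures on $(C(\R^I;\R),\mathcal G)$ that agree on $\Gamma$ agree on all of $\mathcal G$.

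The one conceptual obstacle --- and essentially the only thing to get right --- is recognizing that one should not restrict $\P'$ but instead transport $\P$ forward through $\Psi$, using continuity of $Z$; after that observation, measurability, the cylinder computation, and uniqueness are all standard. Should one insist on the restriction route, the substitute argument would fix a countable dense set $D\subset\R^I$, restrict $\P'$ to the coordinates indexed by $D$ to obtain a measure on $\R^D$, invoke the modulus estimates behind Lemma~\ref{Lemma2} (or directly the a.s.\ continuity of $Z$) to show that this measure is carried by the $\mathcal B(\R^D)$-measurable set of functions on $D$ admitting a continuous extension to $\R^I$, and then push forward along the continuous-extension map; this is equivalent but longer.
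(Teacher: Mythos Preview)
Your proposal is correct and follows essentially the same route as the paper: both define the path map $\omega\mapsto(x\mapsto Z_x(\omega))$ into $C(\R^I;\R)$, argue its $(\mathcal H,\mathcal G)$-measurability via the evaluation maps generating $\mathcal G$, take $\P^Z$ as the pushforward of $\P$, and verify the finite-dimensional distributions on cylinders. The only minor differences are that you are more explicit about restricting to a conull set $\Omega_0$ of continuous paths and that you argue uniqueness directly via the $\pi$-$\lambda$ theorem on the cylinder $\pi$-system, whereas the paper invokes uniqueness of the Daniell--Kolmogorov measure $\P'$; your extra paragraph on the alternative ``restrict $\P'$ via a countable dense set'' route is not in the paper but is a sound remark.
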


For the existence of $\P^Z$ consider the following 
\begin{lemma}
Let $(Z_x)_{x \in \R^I}$ be a $\R$-valued continuous stochastic process defined on $(\Omega, \mathcal{H}, \P)$. Then 
\[
\begin{sistema}
\mathcal{Z}: \Omega \rightarrow C(\R^I;\R) \\
\qquad \omega \rightarrow \mathcal{Z}(\omega)=(Z_x(\omega))_{x \in \R^I}
\end{sistema}
\]
is a random variable, i.e. measurable from $(\Omega,\mathcal{H})$ into $(C(\R^I;\R),\mathcal{G})$.
\end{lemma}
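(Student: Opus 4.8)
The plan is to exploit the identity $\mathcal{G}=\sigma(\Gamma)$ recorded above, together with the elementary principle that a map into a measurable space is measurable as soon as the preimages of a generating class of the target $\sigma$-algebra lie in the source $\sigma$-algebra. Concretely, it suffices to check two things: first, that $\mathcal{Z}$ is well defined, i.e. $\mathcal{Z}(\omega)\in C(\R^I;\R)$ for every $\omega$; second, that $\mathcal{Z}^{-1}(G)\in\mathcal{H}$ for every $G\in\Gamma$.

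Well-definedness is immediate from the hypothesis that $(Z_x)_{x\in\R^I}$ is a continuous stochastic process: for each $\omega$ the path $x\mapsto Z_x(\omega)$ is continuous, hence an element of $C(\R^I;\R)$. (If continuity were only assumed to hold $\P$-a.s., I would first replace $Z$ by an everywhere-continuous modification on the exceptional $\P$-null set, which leaves the finite-dimensional distributions unchanged.)

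For the second point, take a generator $G=\Gamma_{x^{(1)},\dots,x^{(k)}}(A)$ with $A=A_{x^{(1)}}\times\cdots\times A_{x^{(k)}}$ and $A_{x^{(j)}}\in\mathcal{B}(\R)$. By the definitions of $\mathcal{Z}$ and of $\Gamma_{x^{(1)},\dots,x^{(k)}}$,
\[
\mathcal{Z}^{-1}(G)=\{\omega\in\Omega:\ Z_{x^{(1)}}(\omega)\in A_{x^{(1)}},\dots,Z_{x^{(k)}}(\omega)\in A_{x^{(k)}}\}=\bigcap_{j=1}^{k}Z_{x^{(j)}}^{-1}(A_{x^{(j)}}).
\]
Each $Z_{x^{(j)}}\colon(\Omega,\mathcal{H})\to(\R,\mathcal{B}(\R))$ is measurable because $(Z_x)_{x\in\R^I}$ is a stochastic process, so every preimage $Z_{x^{(j)}}^{-1}(A_{x^{(j)}})$ lies in $\mathcal{H}$, and a finite intersection of elements of $\mathcal{H}$ is again in $\mathcal{H}$. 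Thus $\mathcal{Z}^{-1}(G)\in\mathcal{H}$ for all $G\in\Gamma$; since $\{B\subset C(\R^I;\R):\mathcal{Z}^{-1}(B)\in\mathcal{H}\}$ is a $\sigma$-algebra containing $\Gamma$, it contains $\mathcal{G}=\sigma(\Gamma)$, and $\mathcal{Z}$ is $\mathcal{H}/\mathcal{G}$-measurable.

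An equivalent, perhaps slicker, formulation I would also mention: since $\pi_x\circ\mathcal{Z}=Z_x$ for every $x\in\R^I$ and $\mathcal{G}$ is generated by the evaluation maps $(\pi_x)_{x\in\R^I}$, measurability of $\mathcal{Z}$ reduces to measurability of each $Z_x$, which holds by assumption. The only point needing a word of care — and the closest thing to an obstacle — is the identification $\mathcal{G}=\sigma(\Gamma)$ (equivalently, that the Borel $\sigma$-field of $(C(\R^I;\R),\rho_\R)$ is generated by the point evaluations), but this has already been established above using separability of $C(\R^I;\R)$ under the topology of uniform convergence on compacts.
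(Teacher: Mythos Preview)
Your proof is correct and follows essentially the same route as the paper: both use the identification $\mathcal{G}=\sigma(\Gamma)$ and reduce measurability of $\mathcal{Z}$ to checking that $\mathcal{Z}^{-1}(\Gamma_{x^{(1)},\dots,x^{(k)}}(A))=\bigcap_{j=1}^{k}Z_{x^{(j)}}^{-1}(A_{x^{(j)}})\in\mathcal{H}$, which holds because each $Z_{x^{(j)}}$ is a random variable. If anything, your version is slightly more careful, since you explicitly note well-definedness and spell out the standard $\sigma$-algebra argument passing from generators to all of $\mathcal{G}$, whereas the paper leaves that step implicit.
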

\begin{proof}
By previous proposition $\mathcal{G}= \sigma(\Gamma)$, then taking $\mathcal{O} \in \sigma(\Gamma)$, $\mathcal{O}=\Gamma_{x^{(1)}, \dots, x^{(k)}}(A)$ for some $k \in \N$, $\{x^{(1)}, \dots, x^{(k)}\} \subset \R^I$ and $A= A_{x^{(1)}} \times \dots \times A_{x^{(k)}}, A_{x^{(j)}} \in \mathcal{B}(\R)$,we get
\[
\begin{split}
\{\omega \in \Omega | \mathcal{Z}(\omega) \in \mathcal{O} \} &= \{\omega \in \Omega | Z_{x^{(1)}}(\omega) \in A_{x^{(1)}}, \dots ,Z_{x^{(k)}}(\omega) \in A_{x^{(k)}}\}\\
&= \bigcap_{j=1}^{k} \{ Z_{x^{(j)}} \in A_{x^{(j)}}\} \in \mathcal{H}
\end{split}
\]
where we used that $Z_{x^{(j)}}$ are random variables from $(\Omega, \mathcal{H})$ into $(\R,\mathcal{B}(\R))$.
\end{proof}

Then we can define a probability measure $\P^Z$ on $(C(\R^I;\R),\mathcal{G})$ being the image measure of $\mathcal{Z}$ under $\P$, that is 
\[
\forall \mathcal{O} \in \mathcal{G}, \quad \P^Z(\mathcal{O})=\P(\mathcal{Z} \in \mathcal{O})
\]

Now we prove that the finite-dimensional distributions of $Z'$ coincide with those of $Z$. It is sufficient to prove the following
\begin{lemma}
$\P^Z$ coincide wit the image measure of the canonical process $Z'$ under $\P^{'}$ restricted to $(C(\R^I;\R),\mathcal{G})$.
\end{lemma}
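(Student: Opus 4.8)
The plan is to identify both $\P^{Z}$ and the law of the canonical process under $\P'$ as the unique probability measure pinned down by the finite-dimensional distributions $\Lambda$, and to conclude by a uniqueness argument on the cylinder sets.

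First I would fix the objects. By the preceding lemma the map $\mathcal{Z}\colon(\Omega,\mathcal{H})\to(C(\R^{I};\R),\mathcal{G})$, $\omega\mapsto(Z_{x}(\omega))_{x\in\R^{I}}$, is measurable, so $\P^{Z}:=\P\circ\mathcal{Z}^{-1}$ is a genuine probability measure on $(C(\R^{I};\R),\mathcal{G})$. On the product side, the Daniell--Kolmogorov theorem \cite[Theorem 6.16]{kallenberg2006foundations} applied in the previous subsections produces the probability measure $\P'$ on $(\R^{\R^{I}},\mathcal{B}(\R^{\R^{I}}))$ under which the canonical process $Z'=(Z'_{x})_{x\in\R^{I}}$, $Z'_{x}(\omega)=\omega(x)$, has finite-dimensional distributions $\Lambda$; since $Z'$ is the identity map, its image measure under $\P'$ is $\P'$ itself. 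The phrase ``restricted to $(C(\R^{I};\R),\mathcal{G})$'' I read through the inclusion $\iota\colon C(\R^{I};\R)\hookrightarrow\R^{\R^{I}}$: by construction $\mathcal{G}=\sigma(\Gamma)$ with $\Gamma_{x^{(1)},\dots,x^{(k)}}(A)=\iota^{-1}\big(\widetilde\Gamma_{x^{(1)},\dots,x^{(k)}}(A)\big)$, where $\widetilde\Gamma_{x^{(1)},\dots,x^{(k)}}(A)=\bigcap_{j=1}^{k}\pi_{x^{(j)}}^{-1}(A_{x^{(j)}})$ is the corresponding cylinder in $\mathcal{B}(\R^{\R^{I}})$, so $\iota$ is $(\mathcal{G},\mathcal{B}(\R^{\R^{I}}))$-measurable and the assertion to prove is exactly $\iota_{*}\P^{Z}=\P'$.

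The proof is then a comparison on a generating $\pi$-system. The cylinders $\widetilde\Gamma_{x^{(1)},\dots,x^{(k)}}(A)$, with $k\in\N$, $x^{(j)}\in\R^{I}$ and $A=A_{x^{(1)}}\times\cdots\times A_{x^{(k)}}$ measurable rectangles, are closed under finite intersection, contain $\R^{\R^{I}}$, and generate $\mathcal{B}(\R^{\R^{I}})$. On such a set,
\[
(\iota_{*}\P^{Z})\big(\widetilde\Gamma_{x^{(1)},\dots,x^{(k)}}(A)\big)=\P\big(Z_{x^{(1)}}\in A_{x^{(1)}},\dots,Z_{x^{(k)}}\in A_{x^{(k)}}\big)=P^{Z}_{x^{(1)},\dots,x^{(k)}}(A)=\P'\big(\widetilde\Gamma_{x^{(1)},\dots,x^{(k)}}(A)\big),
\]
the first equality by definition of $\mathcal{Z}$ and $\P^{Z}$, the second by definition of the finite-dimensional distributions of $Z$, and the third because $\P'$ has finite-dimensional distributions $\Lambda$. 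Hence $\iota_{*}\P^{Z}$ and $\P'$ coincide on the generating $\pi$-system; being probability measures, Dynkin's $\pi$--$\lambda$ theorem \cite{kallenberg2006foundations} forces $\iota_{*}\P^{Z}=\P'$ on all of $\mathcal{B}(\R^{\R^{I}})$. In particular the finite-dimensional distributions of $Z'$ agree with those of $Z$, which is what this subsection aimed to establish.

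The one subtlety I would be careful about is that $C(\R^{I};\R)$ is in general not a Borel subset of $\R^{\R^{I}}$, so ``$\P'$ restricted to $C(\R^{I};\R)$'' cannot be interpreted as a normalized trace measure $\P'(\,\cdot\cap C(\R^{I};\R))$; the correct object is the pull-back along the measurable inclusion $\iota$, as above. The separability of $(C(\R^{I};\R),\rho_{\R})$---which is precisely what makes the Borel $\sigma$-field $\mathcal{G}$ coincide with the $\sigma$-field generated by the evaluation maps $\pi_{x}$---is used to know that the cylinders indeed generate $\mathcal{G}$. With this framing the lemma is just the uniqueness part of Daniell--Kolmogorov transported to $C(\R^{I};\R)$, so beyond this measurability bookkeeping no additional difficulty arises.
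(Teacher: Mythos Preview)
Your argument is correct and follows essentially the same route as the paper: both compute the two measures on cylinder sets $\Gamma_{x^{(1)},\dots,x^{(k)}}(A)$, reduce to the identity $P^{Z}_{x^{(1)},\dots,x^{(k)}}(A)=\P'(Z'\in\mathcal{O})$ coming from the Daniell--Kolmogorov construction, and conclude. Your version is in fact more careful than the paper's---you make the extension from the generating $\pi$-system to all of $\mathcal{G}$ explicit via Dynkin's $\pi$--$\lambda$ theorem, and you handle the measurability of the inclusion $\iota$ cleanly, whereas the paper stops after checking equality on the generators $\Gamma$ and leaves the extension implicit.
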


\begin{proof}
Fix $\mathcal{O} \in \mathcal{G}=\sigma(\Gamma)$, $\mathcal{O}=\Gamma_{x^{(1)}, \dots, x^{(k)}}(A)$ for some $k \in \N$, $\{x^{(1)}, \dots, x^{(k)}\} \subset \R^I$ and $A= A_{x^{(1)}} \times \dots \times A_{x^{(k)}}, A_{x^{(j)}} \in \mathcal{B}(\R)$. By definition of $\P^{Z}$,
\[
\begin{split}
    \P^Z(\mathcal{O})&=\P(\mathcal{Z} \in \Gamma_{x^{(1)}, \dots, x^{(k)}}(A))\\
    &=\P(\{\omega \in \Omega | \mathcal{Z}(\omega) \in \mathcal{O} \})\\
    &= \P(\{\omega \in \Omega | Z_{x^{(1)}}(\omega) \in A_{x^{(1)}}, \dots ,Z_{x^{(k)}}(\omega) \in A_{x^{(k)}})\\
    &=P^Z_{x^{(1)}, \dots , x^{(k)}}(A)
\end{split}
\]
By Daniell-Kolmogorv extension theorem the finite-dimensional distributions of $Z$ coincide with those of the canonical process $Z'$ under $\P'$, then $P^Z_{x^{(1)}, \dots , x^{(k)}}(A)= \P'(Z' \in \mathcal{O})$.
\end{proof}
The uniqueness of $\P^Z$ follows by the uniqueness $\P'$.

\subsection*{SM E.4: $\sigma (\times_{i=1}^{\infty}C(\R^I;\R)) \subset \sigma (C(\R^I;\R^{\infty})$}
First, note that $\times_{i=1}^{\infty}C(\R^I;\R) \simeq C(\R^I;\R^{\infty})$, indeed the map
\[
\Xi : C(\R^I;\R^{\infty}) \rightarrow \times_{i=1}^{\infty}C(\R^I;\R), \qquad \omega \mapsto (\omega_1, \omega_2, \dots)  
\]
is an isomorphism because is linear and bijective, indeed $\omega$ is $\|\cdot\|_{\infty}$-continuous if and only if each component $\omega_i$ is $|\cdot|$-continuous. It means that each element in one space could be seen as an element in the other and vice-versa, but different topologies are defined on these spaces. Now we prove that the sigma algebra generated by the product topology in $\times_{i=1}^{\infty}C(\R^I;\R)$ is contained on the sigma algebra generated by the topology of uniform convergence on compact set in $C(\R^I;\R^{\infty})$. For each $f,g \in C(\R^I;\R^{\infty})$ we have the following distances
\begin{align}\label{dist_infinity}
\begin{sistema}
\rho_{prod}(f,g)= \sum_{i=1}^{\infty}\frac{1}{2^i} \xi\Big(\sum_{R=1}^{\infty}\frac{1}{2^R} \sup_{x\in B_R(0)}\xi(|f_i(x)-g_i(x)|) \Big), \quad \text{ on } \times_{i=1}^{\infty}C(\R^I;\R)\\
\rho_{unif}(f,g)= \sum_{R=1}^{\infty}\frac{1}{2^R} \sup_{x\in B_R(0)}\xi\Big(\sum_{i=1}^{\infty}\frac{1}{2^i} \xi(|f_i(x)-g_i(x)|) \Big), \quad \text{ on } C(\R^I,\R^{\infty})
\end{sistema}
\end{align}

Using that $\xi$ is increasing and continuous and that $\sup_x (\sum_i h_i(x)) \leq \sum_i \sup_x h_i(x)$ it can be proved that there exists a constant $C>0$ such that $\| f \|_{unif}  \leq C \| f \|_{prod}$. This mean that if $h \in B^{prod}_{\epsilon}(f)=\{g: \|f-g\|_{prod}<\epsilon\}$ than $h \in B^{unif}_{C \epsilon}(f)=\{g :\|f-g\|_{unif}<\epsilon \}$, that is $B^{prod}_{\epsilon}(f) \subset B^{unif}_{C \epsilon}(f)$ which implies $\sigma(\rho_{prod})\subset \sigma(\rho_{unif})$. In particular each compact with respect to $\| \cdot \|_{prod}$ is compact with respect to $\| \cdot \|_{unif}$, indeed considering a $\| \cdot \|_{prod}$-compact $K$ then for every sequence $(k_i)\subset K$ there exists $(k_{i_j})\subset K$ and $k \in K$ such that $\|k_{i_j}-k \|_{prod} \rightarrow 0$. Moreover $\|k_{i_j}-k \|_{unif} \leq C \|k_{i_j}-k \|_{prod} \rightarrow 0$, i.e. $K$ is compact with respect to $\| \cdot \|_{unif}$.

\section*{SM F}\zlabel{sec:appF}
In this section we prove the Proposition \ref{Step1}.
\begin{proof}
By Proposition 16.6 of \cite{kallenberg2006foundations} $f(n) \overset{d}{\rightarrow} f$ in $C(\R^I;S)$ iff $f(n) \overset{d}{\rightarrow} f$ in $C(K;S)$ for any $K\subset \R^I$ compact. By Lemma 16.2 of of \cite{kallenberg2006foundations} the latter holds iff $f(n) \overset{f_d}{\rightarrow} f$ and $(f(n))_{n \geq 1}$ is relatively compact in distribution in $C(K;S)$. Note that converge of the finite-dimensional distributions holds in $\R^I$ iif it holds in the restriction $K$ for any compact $K \subset \R^I$. The space $(C(K;S), \rho_K)$, namely the space of continuous functions from a generic compact $K \subset \R^I$ to a Polish space $S$ and $C(K;S)$ endowed with the  uniform metric $\rho_K(f,g) = \sup_{x \in K} d(f(x),g(x))$, is itself a Polish space \citep[Lemma 3.97 and Lemma 3.99]{charalambos2013infinite}. Thus by Proposition 16.3 of \cite{kallenberg2006foundations}, i.e. Prohorov Theorem, on $C(K,S)$ $(f(n))_{n \geq 1}$ is relatively compact in distribution iif $(f(n))_{n \geq 1}$ is uniformly tight. Thus, so far we have shown that $f(n)\stackrel{d}{\rightarrow}f$ in $C(\R^I; S)$ iff: i) $f(n)\stackrel{f_d}{\rightarrow}f$ and ii) the sequence $(f(n))_{n \geq 1}$ is uniformly tight on $C(K,S)$ for a generic compact $K \subset \R^I$. It remains to show that the latter holds if $(f(n))_{n \geq 1}$ is uniformly tight on $C(\R^I;S)$. Fix $K$ compact in $\R^I$ and 
Consider the map 
\[
\pi_K:(C(\R^I;S),\rho_S) \rightarrow (C(K;S),\rho_K), \quad f \rightarrow f_{|K}
\]
where $f_{|K}$ is the restriction of $f$ to $K$ and $\rho_S$ is the metric $\rho_{\R}$ defined in (\ref{metric_on_C}) when $S=\R$ and $\rho_{unif}$ defined in (\ref{dist_infinity}) when $S=\R^{\infty}$. By proposition 16.4 of \cite{kallenberg2006foundations} if $\pi_K$ is continuous then it moves uniformly tight sequences into uniformly tight sequences. The continuity of $\pi_K$ follows by the proof of Proposition 16.6 of  \cite{kallenberg2006foundations}.
\end{proof}

\end{document}